\def\negh{\mathbf h}
\def\negs{\mathbf s}
\def\negx{\mathbf x}
\def\negy{\mathbf y}
\def\negz{\mathbf z}
\def\negc{\mathbf c}
\def\negf{\mathbf f}
\def\negg{\mathbf g}
\def\neg1{\text{\boldmath$1$}}
\def\neg1{\text{\boldmath$1$}}
\DeclareMathOperator{\lub}{lub}
\newtheorem{theorem}{Theorem}[section]
\newtheorem{definition}[theorem]{Definition}
\newtheorem{lemma}[theorem]{Lemma}
\newtheorem{corollary}[theorem]{Corollary}
\newtheorem{proposition}[theorem]{Proposition}
\newtheorem{remark}[theorem]{Remark}
\newtheorem{example}[theorem]{Example}
\title[On Atoms of the set of Generalized Numerical Semigroups with fixed corner element]{On Atoms of the set of Generalized Numerical Semigroups with fixed corner element}
\author[M. Bernardini]{Matheus Bernardini}
\address{Universidade de Bras\'{i}lia, Faculdade do Gama, Bras\'{i}lia, DF, Brazil}
\email{matheusbernardini@unb.br}
\author[A. Castellanos]{Alonso S. Castellanos}
\address{Universidade Federal de Uberl\^{a}ndia, Faculdade de Matem\'{a}tica, Uberl\^andia, MG, Brazil}
\email{alonso.catellanos@ufu.br}
\author[W. Ten\'orio]{Wanderson Ten\'orio}
\address{Universidade Federal do Mato Grosso, Instituto de Ci\^{e}ncias Exatas e da Terra, Cuiab\'{a}, MT, Brazil} 
\email{wanderson.tenorio@ufmt.br}
\author[G. Tizziotti]{Guilherme Tizziotti}
\address{Universidade Federal de Uberl\^{a}ndia, Faculdade de Matem\'{a}tica, Uberl\^andia, MG, Brazil}
\email{guilhermect@ufu.br}
\thanks{{\em 2020 Math. Subj. Class.}: Primary 20M14; Secondary 06F05, 11D07}
\thanks{{\em Keywords}: generalized numerical semigroup, atoms, corner element}
\thanks{The author Alonso S. Castellanos was partially funded by FAPEMIG APQ 00696-18 and RED-0013-21. The author Guilherme Tizziotti was partially funded by CNPq 307037/2019-3.}
\begin{document}

	\maketitle
	
	\begin{abstract}
       We study the so-called atomic GNS, which naturally extends the concept of atomic numerical semigroup. We introduce the notion of corner special gap and we characterize the class of atomic GNS in terms of the cardinality of the set of corner special gaps and also in terms of a maximal property. Using this maximal property we present some properties concerning irreducibility of Frobenius GNSs. In particular, we provide sufficient conditions for certain Frobenius GNSs to be an atom non-irreducible (ANI). Furthermore, we given necessary and sufficient conditions so that the maximal elements of a set of Frobenius GNSs with two fixes gaps to be all irreducible or not. 

	\end{abstract}

	\section{Introduction}
Let $\mathbb{N}_0$  be the set of non-negative integers and $d$ be a positive integer. A \textit{generalized numerical semigroup} (GNS) is a subset of $\mathbb{N}^d_0$ such that it is closed under the usual addition in $\mathbb{N}_0^d$, it contains the zero element of $\mathbb{N}^d_0$ and its complement is finite. If $S$ is a GNS, then the elements of $\operatorname{H}(S)=\mathbb{N}_0^d\setminus S$ are called the \textit{gaps} of $S$ and the genus of $S$ is the cardinality of the set of gaps of $S$. The concept of GNS was first introduced in \cite{FPU}, as a natural generalization of numerical semigroups, which is the case when $d=1$. For a contextualization on numerical semigroups and the state of art of some open problems, we indicate \cite{GS-R,Kaplan}. 

Recently, some papers have been published concerning the study of the GNSs, mostly inspired by definitions on the numerical semigroup case. For instance, the problem of counting GNSs by genus have been considered by Failla, Peterson, and Utano \cite{FPU} and by Cisto, Delgado, and Garc­\'ia-S\'anchez \cite{CDG}. Generalizations of the Wilf's conjecture have been considered by Cisto \textit{et al.} \cite{CDFFPU}. Counting Frobenius GNSs with a fixed Frobenius element have been studied by Singhal and Lin \cite{SL}. Cisto, Failla, Peterson, and Utano \cite{CFPU} studied the irreducible GNSs and Cisto, and Ten\'orio \cite{CW} introduced the concept of almost symmetry for GNSs. We recall that a GNS is {\it irreducible} if it cannot be expressed as the intersection of two GNSs containing it properly. In 2009, Rosales \cite{Rosales} introduced the so-called {\em atomic numerical semigroup}; he proved several properties of this class of numerical semigroups and, in particular, he characterized the class of atomic and non-irreducible numerical semigroup as the one that have exactly two special gaps. Atomic numerical semigroups have also been studied in \cite{Robles-Rosales}. 


In this work, we are interested in dealing with the generalization of atomic numerical semigroups to higher dimensions. Since the set of Frobenius elements of a GNS can have more than one element, we consider the class of irreducible GNSs that preserve its corner element under a unitary extension, which is the class of atomic GNSs. We obtain a characterization of the class atomic GNSs as the one that have at most one corner special gap (see Definition \ref{c special gaps}), which is an important tool to deal with atomic GNSs. Moreover, we relate the class of atomic GNSs with the class of GNSs that have a maximal property.

Here is an outline of the paper. In Section 2, we present some basic results and terminologies concerning generalized numerical semigroups, that are useful throughout the paper. In Section 3, we deal with the so-called corner special gaps of a GNS, which are the special gaps that preserve the corner element by unitary extensions. In Section 4, we introduce the concept of atoms, which have been studied for numerical semigroups; this concept is characterized by using the cardinality of the set of corner special gaps (see Theorem~\ref{prop atomo c especial}) and also by a maximal property (see Theorem~\ref{atom MF}). Using the notion of this maximal property, in Section 5, we present some properties concerning irreducibility of Frobenius GNSs. In particular, we provide sufficient conditions for certain Frobenius GNSs to be an atom non-irreducible (ANI) (see Proposition \ref{prop ANI}). Furthermore, we given necessary and sufficient conditions so that the maximal elements of a set of Frobenius GNSs with two fixes gaps to be all irreducible or not (see Theorem \ref{teo ANI} and Corollary \ref{Cor GNS Irreducible}).  

%
%
%
%
	
\section{Preliminaries}

In this section we collect some basic tools and results that will be used throughout this text. We start by recalling the notion of corner element of GNS introduced in \cite{BTT}.
 
\begin{definition}
Let $S \subseteq \mathbb{N}_0^d$ be a GNS. An element $\negc = (c_1, \ldots, c_d) \in S$ is called a \emph{corner element} of $S$ if the following conditions are satisfied:
\begin{itemize}
  \item[(1)] for all $i \in \{1,\ldots,d\}$ and for all $x \in \mathbb{N}_0$ such  that $x \geq c_i$ we have that \newline $(y_1, \ldots, y_{i-1}, x , y_{i+1}, \ldots , y_d) \in S$ for all $y_1, \ldots, y_{i-1}, y_{i+1}, \ldots , y_d \in \mathbb{N}_0$;
  \item[(2)] for all $i \in \{1,\ldots,d\}$, we have that $(z_1, \ldots, z_{i-1}, c_{i} - 1, z_{i+1}, \ldots , z_d) \notin S$ for some $z_1, \ldots, z_{i-1}, z_{i+1}, \ldots , z_d \in \mathbb{N}_0$.
  \end{itemize}
\end{definition}

The corner element of a GNS $S$ is unique (see \cite[Proposition 3.2]{BTT}) and will be denoted by $\mathbf{c}(S)$. Concerning the coordinates of the corner element of a GNS $S$ in $\mathbb{N}_0^d$, it is known from \cite[Proposition 3.3]{BTT} that if $S$ has positive genus and $\mathbf{c}(S) =(c_1,\ldots,c_d)$, then $c_i \neq 0$ for all $i \in \{1,\ldots,d\}$, and there is $i \in \{1,\ldots,d\}$ such that $c_i>1$. 

Recall that the \emph{least upper bound} ($\lub$) of a finite set $\mathcal{B}\subseteq \mathbb{N}_0^d$ is the element $$\mbox{lub}(\mathcal{B}):=(\max\{x_1\mbox{ : } \negx \in \mathcal{B} \},\ldots,\max\{x_d\mbox{ : } \negx \in \mathcal{B} \})\in \mathbb{N}_0^d.$$

Hence, the corner element can be related with the gaps of a GNS as follows.

\begin{proposition} \cite[Theorem 3.5]{BTT} \label{lub}
    Let $S\subseteq \mathbb{N}_0^d$ be a GNS with positive genus and corner element $\negc$. Then $$\negc=\operatorname{lub}(\operatorname{H}(S))+\mathbf{1}.$$
\end{proposition}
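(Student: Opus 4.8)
The plan is to prove the identity coordinatewise. Since $S$ has positive genus, the gap set $\operatorname{H}(S)$ is nonempty and finite, so $\operatorname{lub}(\operatorname{H}(S)) = (m_1, \ldots, m_d)$ is well defined, where $m_i = \max\{x_i : \negx \in \operatorname{H}(S)\}$. Writing $\negc = (c_1, \ldots, c_d)$, it suffices to show $c_i = m_i + 1$ for each $i \in \{1, \ldots, d\}$, and I would obtain this from the two-sided estimate $m_i \le c_i - 1$ together with $m_i \ge c_i - 1$.

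For the upper bound, fix $i$ and take any gap $\negx = (x_1, \ldots, x_d) \in \operatorname{H}(S)$. If one had $x_i \ge c_i$, then condition (1) in the definition of corner element would force every point whose $i$-th coordinate equals $x_i$ to lie in $S$; in particular $\negx \in S$, contradicting $\negx \in \operatorname{H}(S)$. Hence $x_i \le c_i - 1$ for every gap, and taking the maximum over the nonempty finite set $\operatorname{H}(S)$ yields $m_i \le c_i - 1$. For the reverse inequality I would invoke condition (2): there exist $z_1, \ldots, z_{i-1}, z_{i+1}, \ldots, z_d \in \mathbb{N}_0$ with $(z_1, \ldots, z_{i-1}, c_i - 1, z_{i+1}, \ldots, z_d) \notin S$, that is, a gap whose $i$-th coordinate is exactly $c_i - 1$. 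This gap witnesses $m_i \ge c_i - 1$. Combining the two inequalities gives $m_i = c_i - 1$ for every $i$, which is precisely $\negc = \operatorname{lub}(\operatorname{H}(S)) + \mathbf{1}$.

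The argument is essentially immediate from the two defining conditions, so I do not expect a serious obstacle; the only points requiring care are that positive genus is exactly what guarantees $\operatorname{H}(S) \neq \emptyset$, so that each $m_i$ is a genuine maximum, and that condition (2) supplies a gap with $i$-th coordinate equal to $c_i - 1$, which is what upgrades the lower bound to the sharp value $c_i - 1$ rather than something strictly smaller.
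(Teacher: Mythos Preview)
Your argument is correct: the two defining conditions of the corner element give exactly the two inequalities $m_i \le c_i - 1$ and $m_i \ge c_i - 1$ for each coordinate, and positive genus ensures $\operatorname{H}(S)$ is nonempty so that the $m_i$ are well defined. Note, however, that the paper does not supply its own proof of this statement; it is quoted as \cite[Theorem~3.5]{BTT} and used without argument, so there is no in-paper proof to compare against. Your proof is precisely the natural one and is what one would expect the cited reference to contain.
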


The natural partial order $\leq$ in $\mathbb{N}_0^d$ stands for the relation defined as: for $\negx, \negy \in \mathbb{N}_0^d$, 
$$\negx\leq \negy \ \ \mbox{if and only if} \ \ x_i\leq y_i \ \ \mbox{for all } i\in \{1,\ldots, d\}.$$
If $\negx\leq \negy$ and $\negx \neq \negy$, we just write $\negx< \negy$.

When there is a unique maximal element in $\operatorname{H}(S)$ with respect to the natural partial order $\leq$ of $\mathbb{N}_0^d$, $S$ is said to be a \emph{Frobenius GNS}. In this case, the maximal element is called the \emph{Frobenius element} of $S$ and denoted by $\operatorname{F}(S)$. In particular, Proposition~\ref{lub} yields that $S$ is a Frobenius GNS if and only if $\negc(S)-\mathbf{1}\in \operatorname{H}(S)$, and in this case $\operatorname{F}(S)=\negc(S)-\mathbf{1}$.

\begin{lemma} \label{corner subset} Let $S$ and $T$ be two GNSs in $\mathbb{N}_0^d$. If $S \subseteq T$, then $\mathbf{c}(T) \leq \mathbf{c}(S)$.
\end{lemma}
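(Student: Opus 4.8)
The plan is to reduce the inclusion of semigroups to an inclusion of their gap sets and then read off the corner elements from the least upper bound description in Proposition~\ref{lub}.

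First I would take complements in $\mathbb{N}_0^d$. Since $S \subseteq T$, we obtain $\operatorname{H}(T) = \mathbb{N}_0^d \setminus T \subseteq \mathbb{N}_0^d \setminus S = \operatorname{H}(S)$; that is, every gap of $T$ is also a gap of $S$. Before applying Proposition~\ref{lub}, I would dispose of the degenerate case $T = \mathbb{N}_0^d$: here $\operatorname{H}(T) = \emptyset$ and, directly from the definition of corner element, condition $(2)$ forces each coordinate of $\mathbf{c}(T)$ to be $0$, so $\mathbf{c}(T) = \mathbf{0}$ and the inequality $\mathbf{c}(T) \leq \mathbf{c}(S)$ holds trivially. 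Assume now $T \neq \mathbb{N}_0^d$, so that $T$ has positive genus; since $\operatorname{H}(T) \subseteq \operatorname{H}(S)$ is then nonempty, $S$ has positive genus as well, and Proposition~\ref{lub} applies to both, giving $\mathbf{c}(S) = \operatorname{lub}(\operatorname{H}(S)) + \mathbf{1}$ and $\mathbf{c}(T) = \operatorname{lub}(\operatorname{H}(T)) + \mathbf{1}$.

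The key step is the monotonicity of the least upper bound under inclusion. For each coordinate $i \in \{1,\ldots,d\}$, maximizing over the smaller set $\operatorname{H}(T)$ cannot exceed maximizing over $\operatorname{H}(S)$, so $\max\{x_i : \negx \in \operatorname{H}(T)\} \leq \max\{x_i : \negx \in \operatorname{H}(S)\}$; hence $\operatorname{lub}(\operatorname{H}(T)) \leq \operatorname{lub}(\operatorname{H}(S))$ in the natural partial order $\leq$ of $\mathbb{N}_0^d$. Adding $\mathbf{1}$ to both sides preserves this inequality, which yields $\mathbf{c}(T) \leq \mathbf{c}(S)$, as desired.

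I do not expect a genuine obstacle here: the argument is routine once the gap inclusion and Proposition~\ref{lub} are in hand. The only point requiring care is the genus-zero case, where the $\operatorname{lub}$ formula is not available and one must argue $\mathbf{c}(T) = \mathbf{0}$ straight from the definition of corner element.
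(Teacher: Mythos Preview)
Your proposal is correct and follows essentially the same route as the paper: pass from $S\subseteq T$ to $\operatorname{H}(T)\subseteq\operatorname{H}(S)$, invoke monotonicity of $\operatorname{lub}$, and conclude via Proposition~\ref{lub}. You are in fact slightly more careful than the paper, which tacitly assumes positive genus when applying Proposition~\ref{lub}; your separate treatment of the case $T=\mathbb{N}_0^d$ fills that small gap.
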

\begin{proof}
    If $S \subseteq T$, then as $\operatorname{H}(T) \subseteq \operatorname{H}(S)$ we have $\operatorname{lub}(\operatorname{H}(T))\leq \operatorname{lub}(\operatorname{H}(S))$, and the conclusion thus follows from Proposition~\ref{lub}. 
\end{proof}

As a consequence, Lemma~\ref{corner subset} yields the following.

\begin{corollary} \label{lemma corner subset}
		Let $S_1, S_2 \subseteq \mathbb{N}_{0}^{d}$ be GNSs  with corner element $\mathbf{c}$. If $S$ is a GNS such that $S_1 \subseteq S \subseteq S_2$, then $S$ has also corner element $\mathbf{c}$.
\end{corollary}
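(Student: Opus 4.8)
The plan is to prove Corollary~\ref{lemma corner subset} by a direct application of Lemma~\ref{corner subset}, exploiting the hypothesis that both $S_1$ and $S_2$ share the same corner element $\mathbf{c}$ and that $S$ is sandwiched between them. The key observation is that the corner element is monotone with respect to inclusion in a way that produces a squeeze: enlarging a GNS can only decrease (or preserve) its corner element, so being trapped between two GNSs with the \emph{same} corner element forces the intermediate one to have that corner element as well.

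Concretely, I would first apply Lemma~\ref{corner subset} to the inclusion $S_1 \subseteq S$, which gives $\mathbf{c}(S) \leq \mathbf{c}(S_1) = \mathbf{c}$. Next I would apply the same lemma to the inclusion $S \subseteq S_2$, obtaining $\mathbf{c}(S_2) \leq \mathbf{c}(S)$, that is, $\mathbf{c} \leq \mathbf{c}(S)$. Combining the two inequalities yields $\mathbf{c} \leq \mathbf{c}(S) \leq \mathbf{c}$, and since $\leq$ is a partial order (hence antisymmetric) on $\mathbb{N}_0^d$, I conclude $\mathbf{c}(S) = \mathbf{c}$.

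There is essentially no obstacle here; the content is entirely carried by Lemma~\ref{corner subset}, and the remaining work is just chaining two inequalities together with the antisymmetry of the partial order $\leq$ on $\mathbb{N}_0^d$. The only point worth stating carefully is that the corner element is a well-defined single element (guaranteed by \cite[Proposition 3.2]{BTT}), so that the equalities $\mathbf{c}(S_1) = \mathbf{c}(S_2) = \mathbf{c}$ are meaningful and the squeeze argument genuinely pins down $\mathbf{c}(S)$.
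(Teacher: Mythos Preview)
Your proposal is correct and matches the paper's approach exactly: the paper simply states that the corollary follows from Lemma~\ref{corner subset}, and your two applications of that lemma together with antisymmetry of $\leq$ on $\mathbb{N}_0^d$ spell out precisely the intended squeeze argument.
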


For $S^*=S\setminus \{\mathbf{0}\}$, the set of \emph{pseudo-Frobenius elements} of a GNS $S$ is defined by $$\operatorname{PF}(S) := \{\negx \in \operatorname{H}(S) \ : \ \negx + S^* \subseteq S\}.$$
The elements in the subset
$$\operatorname{EH}(S):= \{ \mathbf{x} \in \operatorname{PF}(S) \ : \ 2 \mathbf{x} \in S \}$$
are called \emph{special gaps} of $S$ and play an important role in the theory because of the following property regarding unitary extensions of GNSs.
	
\begin{proposition} \cite[Proposition 2.3]{CFPU} \label{GNS special}
	Let $S \subseteq \mathbb{N}_0^d$ be a GNS and $\mathbf{h} \in \operatorname{H}(S)$. Then the unitary extension $S \cup \{ \mathbf{h} \}$ is a GNS if and only if $\mathbf{h} \in \operatorname{EH}(S)$.
\end{proposition}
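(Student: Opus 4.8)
The plan is to first reduce the statement to a single condition. Setting $T := S \cup \{\mathbf{h}\}$, I observe that two of the three defining properties of a GNS come for free: since $\mathbf{0} \in S \subseteq T$, the set $T$ contains the zero element, and since $\operatorname{H}(T) = \operatorname{H}(S) \setminus \{\mathbf{h}\}$ is a subset of the finite set $\operatorname{H}(S)$, the complement of $T$ is finite. Hence ``$T$ is a GNS'' is equivalent to the single assertion that $T$ is closed under addition. The whole proposition therefore becomes the claim that $T$ is closed under addition if and only if $\mathbf{h} \in \operatorname{EH}(S)$, i.e. if and only if $\mathbf{h} + S^* \subseteq S$ and $2\mathbf{h} \in S$.

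For the direction assuming $\mathbf{h} \in \operatorname{EH}(S)$, I would verify closure by a short case analysis on two elements $\mathbf{a}, \mathbf{b} \in T$. Because $\mathbf{h} \notin S$, each summand is either an element of $S$ or equal to $\mathbf{h}$, and these alternatives are mutually exclusive. If both lie in $S$, then $\mathbf{a}+\mathbf{b} \in S \subseteq T$ by closure of $S$; if exactly one equals $\mathbf{h}$, say $\mathbf{a}=\mathbf{h}$ and $\mathbf{b} \in S$, then either $\mathbf{b}=\mathbf{0}$ and $\mathbf{a}+\mathbf{b}=\mathbf{h} \in T$, or $\mathbf{b} \in S^*$ and $\mathbf{a}+\mathbf{b} \in S$ by the hypothesis $\mathbf{h}+S^* \subseteq S$; finally if $\mathbf{a}=\mathbf{b}=\mathbf{h}$, then $\mathbf{a}+\mathbf{b}=2\mathbf{h} \in S$ by hypothesis. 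In every case $\mathbf{a}+\mathbf{b} \in T$, so $T$ is closed. For the converse, I would assume $T$ is a GNS and read off the two membership conditions from closure: from $\mathbf{h}+\mathbf{h} \in T$ and the fact that $2\mathbf{h} \neq \mathbf{h}$ (as $\mathbf{h} \neq \mathbf{0}$, since $\mathbf{0} \in S$ but $\mathbf{h} \in \operatorname{H}(S)$) we get $2\mathbf{h} \in S$; and for any $\mathbf{s} \in S^*$, from $\mathbf{h}+\mathbf{s} \in T$ together with $\mathbf{h}+\mathbf{s} \neq \mathbf{h}$ we get $\mathbf{h}+\mathbf{s} \in S$. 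These two facts say precisely that $\mathbf{h} \in \operatorname{PF}(S)$ with $2\mathbf{h} \in S$, i.e. $\mathbf{h} \in \operatorname{EH}(S)$.

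There is no serious obstacle here; the argument is essentially bookkeeping. The only points demanding a little care are the edge cases, namely checking that adding $\mathbf{0}$ causes no problem and, crucially, excluding the degenerate equalities $2\mathbf{h}=\mathbf{h}$ and $\mathbf{h}+\mathbf{s}=\mathbf{h}$, both of which are ruled out by $\mathbf{h}\neq\mathbf{0}$. This is exactly why the definition of $\operatorname{EH}(S)$ packages together the two conditions $\mathbf{h}+S^*\subseteq S$ and $2\mathbf{h}\in S$: the former handles a sum of $\mathbf{h}$ with a nonzero element of $S$, while the latter is needed separately to cover the sum $\mathbf{h}+\mathbf{h}$, which is not captured by $\mathbf{h}+S^*$.
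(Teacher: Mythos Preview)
Your proof is correct. Note, however, that the paper does not actually prove this proposition: it is quoted from \cite[Proposition 2.3]{CFPU} and stated without proof in the preliminaries. Your argument is the standard one---reducing the question to closure under addition and then handling the three cases $\mathbf{a},\mathbf{b}\in S$, exactly one of $\mathbf{a},\mathbf{b}$ equal to $\mathbf{h}$, and $\mathbf{a}=\mathbf{b}=\mathbf{h}$---and the care you take with the edge cases $2\mathbf{h}\neq\mathbf{h}$ and $\mathbf{h}+\mathbf{s}\neq\mathbf{h}$ (both following from $\mathbf{h}\neq\mathbf{0}$) is exactly what is needed.
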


\begin{lemma} \cite[Lemma 3.4]{CFPU} \label{lemma maximals}
	Let $S$ and $T$ be two GNSs in $ \mathbb{N}_0^d$ such that $S \subsetneq T$. If $\mathbf{h} \in Maximals(T \setminus S)$ (maximal with respect to the natural partial order in $\mathbb{N}_0^d$), then $\mathbf{h} \in \operatorname{EH}(S)$.
\end{lemma}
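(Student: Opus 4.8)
The plan is to verify directly that $\mathbf{h}$ satisfies the three defining conditions for membership in $\operatorname{EH}(S)$: namely that $\mathbf{h} \in \operatorname{H}(S)$, that $\mathbf{h} + S^* \subseteq S$ (so that $\mathbf{h} \in \operatorname{PF}(S)$), and that $2\mathbf{h} \in S$. The single observation driving all three is that, because $T$ is closed under addition and $\mathbf{h}$ is maximal in $T \setminus S$ with respect to $\leq$, any element of $T$ lying strictly above $\mathbf{h}$ must already belong to $S$.

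First I would record the membership $\mathbf{h} \in \operatorname{H}(S)$: since $\mathbf{h} \in T \setminus S$ we have $\mathbf{h} \notin S$, hence $\mathbf{h} \in \operatorname{H}(S)$; moreover $\mathbf{h} \neq \mathbf{0}$ because $\mathbf{0} \in S$. For the pseudo-Frobenius condition, I would take an arbitrary $\mathbf{s} \in S^*$. As $S \subseteq T$, both $\mathbf{h}$ and $\mathbf{s}$ lie in $T$, so $\mathbf{h} + \mathbf{s} \in T$ by closure under addition. Since $\mathbf{s} \neq \mathbf{0}$ in $\mathbb{N}_0^d$ we have $\mathbf{h} + \mathbf{s} > \mathbf{h}$, and the maximality of $\mathbf{h}$ in $T \setminus S$ then forces $\mathbf{h} + \mathbf{s} \notin T \setminus S$; combined with $\mathbf{h} + \mathbf{s} \in T$ this yields $\mathbf{h} + \mathbf{s} \in S$. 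As $\mathbf{s}$ was arbitrary, this proves $\mathbf{h} + S^* \subseteq S$, so that $\mathbf{h} \in \operatorname{PF}(S)$.

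Finally, applying the same reasoning to $2\mathbf{h} = \mathbf{h} + \mathbf{h} \in T$, which satisfies $2\mathbf{h} > \mathbf{h}$ precisely because $\mathbf{h} \neq \mathbf{0}$, delivers $2\mathbf{h} \in S$. Together these three facts give $\mathbf{h} \in \operatorname{EH}(S)$, as required. I do not anticipate a genuine obstacle here: the argument is essentially a one-idea proof, and the only point demanding a little care is the verification of the strict inequalities $\mathbf{h} + \mathbf{s} > \mathbf{h}$ and $2\mathbf{h} > \mathbf{h}$, both of which rest on the elementary observations that $\mathbf{s}$ and $\mathbf{h}$ are nonzero elements of $\mathbb{N}_0^d$.
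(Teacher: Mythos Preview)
Your argument is correct and is exactly the standard direct verification one expects: use closure of $T$ under addition together with maximality of $\mathbf{h}$ in $T\setminus S$ to force $\mathbf{h}+\mathbf{s}\in S$ for every $\mathbf{s}\in S^*$ and $2\mathbf{h}\in S$. Note that the present paper does not supply its own proof of this lemma---it is quoted from \cite[Lemma~3.4]{CFPU}---so there is nothing in the paper to compare against; your proof is the natural one and matches what appears in the cited source.
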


An important property in numerical semigroups involves the notion of irreducibility, which allows to study decompositions of numerical semigroups as intersections of other ones. This notion was extended to the setting of GNSs in \cite{CFPU}: a GNS $S \subseteq \mathbb{N}_0^d$ is called \emph{irreducible} if it cannot be expressed as an intersection of two GNSs properly containing $S$. Otherwise, $S$ is called \emph{non-irreducible}. 

Among the several characterizations on irreducibility for GNSs given in \cite{CFPU}, we have the following ones.

\begin{proposition} \cite[Propositions 2.5 and 2.6]{CFPU} \label{especial irredutivel}
	Let $S \subseteq \mathbb{N}_0^d$ be a GNS. The following conditions are equivalent:
 \begin{enumerate}
     \item $S$ is irreducible;
     \item $|\operatorname{EH}(S)|=1$;
     \item there exists ${\mathbf f}\in \operatorname{H}(S)$ such that for every ${\mathbf h}\in \operatorname{H}(S)$ with $2{\mathbf h}\neq {\mathbf f}$ we have ${\mathbf f}-{\mathbf h}\in S$.
 \end{enumerate}
\end{proposition}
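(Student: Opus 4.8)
The plan is to prove the equivalences through the interplay between Proposition~\ref{GNS special}, which identifies $\operatorname{EH}(S)$ with those gaps whose adjunction produces a GNS, and Lemma~\ref{lemma maximals}. Throughout I assume $S$ has positive genus (if $S=\mathbb{N}_0^d$ then $S$ is vacuously irreducible while $\operatorname{EH}(S)=\emptyset$ and (3) is vacuous, so this degenerate case must be excluded for the equivalence to hold). A preliminary observation I would record is that every maximal element of $\operatorname{H}(S)$ lies in $\operatorname{EH}(S)$: if $\mathbf{h}$ is maximal in $\operatorname{H}(S)$, then for each $\mathbf{s}\in S^*$ one has $\mathbf{h}+\mathbf{s}>\mathbf{h}$ and $2\mathbf{h}>\mathbf{h}$, so both must already lie in $S$ by maximality, whence $\mathbf{h}\in\operatorname{EH}(S)$. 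In particular $\operatorname{EH}(S)\neq\emptyset$.

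For $(1)\Leftrightarrow(2)$ I argue by contraposition in each direction. If $|\operatorname{EH}(S)|\geq 2$, I choose distinct $\mathbf{f}_1,\mathbf{f}_2\in\operatorname{EH}(S)$; by Proposition~\ref{GNS special} each $S\cup\{\mathbf{f}_i\}$ is a GNS properly containing $S$, and a direct check gives $(S\cup\{\mathbf{f}_1\})\cap(S\cup\{\mathbf{f}_2\})=S$, so $S$ is non-irreducible. Conversely, if $S=S_1\cap S_2$ with $S\subsetneq S_i$, then each $S_i\setminus S$ is a nonempty finite subset of $\operatorname{H}(S)$; picking a maximal element $\mathbf{h}_i$ of $S_i\setminus S$, Lemma~\ref{lemma maximals} yields $\mathbf{h}_i\in\operatorname{EH}(S)$. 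When $|\operatorname{EH}(S)|=1$ these must coincide with a single $\mathbf{f}$, forcing $\mathbf{f}\in S_1\cap S_2=S$ and contradicting $\mathbf{f}\in\operatorname{H}(S)$. Hence $|\operatorname{EH}(S)|=1$ implies irreducibility, settling $(1)\Leftrightarrow(2)$.

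It remains to handle $(2)\Leftrightarrow(3)$. First I note that both (2) and (3) force $S$ to be a Frobenius GNS with $\operatorname{F}(S)=\mathbf{f}$ and every gap $\mathbf{h}\leq\mathbf{f}$: under (2) this follows from the preliminary observation, and under (3) it follows because $\mathbf{f}-\mathbf{h}\in S$ requires $\mathbf{h}\leq\mathbf{f}$ for all but the exceptional gap with $2\mathbf{h}=\mathbf{f}$. For $(3)\Rightarrow(2)$ I show $\operatorname{EH}(S)=\{\mathbf{f}\}$: if $\mathbf{x}\in\operatorname{EH}(S)$ with $\mathbf{x}\neq\mathbf{f}$, then $2\mathbf{x}\neq\mathbf{f}$ (otherwise $2\mathbf{x}=\mathbf{f}\in\operatorname{H}(S)$ would violate that $\mathbf{x}$ is special), so (3) gives $\mathbf{f}-\mathbf{x}\in S^*$, and then $\mathbf{x}\in\operatorname{PF}(S)$ yields $\mathbf{f}=\mathbf{x}+(\mathbf{f}-\mathbf{x})\in S$, a contradiction. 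For $(2)\Rightarrow(3)$ I argue contrapositively: assuming some gap with $2\mathbf{h}\neq\mathbf{f}$ satisfies $\mathbf{f}-\mathbf{h}\in\operatorname{H}(S)$, I introduce the symmetric obstruction set $B=\{\mathbf{g}\in\operatorname{H}(S):\mathbf{f}-\mathbf{g}\in\operatorname{H}(S)\}$, which is nonempty and contains a point with $2\mathbf{g}\neq\mathbf{f}$. The core claim is that a maximal element $\mathbf{x}$ of $B$ with $2\mathbf{x}\neq\mathbf{f}$ is a special gap distinct from $\mathbf{f}$: both $\mathbf{x}\in\operatorname{PF}(S)$ and $2\mathbf{x}\in S$ follow by showing that any violation would produce an element of $B$ strictly above $\mathbf{x}$. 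This second special gap contradicts $|\operatorname{EH}(S)|=1$.

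The step I expect to be the main obstacle is exactly the production of this second special gap in $(2)\Rightarrow(3)$: since the order on $\mathbb{N}_0^d$ is only partial, $B$ need not have a unique maximal element, and I must exclude the possibility that every maximal element of $B$ is the exceptional point defined by $2\mathbf{x}=\mathbf{f}$. Resolving this requires using the symmetry $\mathbf{g}\in B\iff\mathbf{f}-\mathbf{g}\in B$ to relocate an off-exceptional witness (or its reflection) into a maximal position, after which the two defining conditions of $\operatorname{EH}(S)$ are verified by the maximality argument sketched above.
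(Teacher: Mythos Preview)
The paper does not prove this proposition; it is quoted from \cite[Propositions 2.5 and 2.6]{CFPU} and used as a black box. So there is no in-paper proof to compare against, and your proposal stands or falls on its own.

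Your argument is correct. The equivalence $(1)\Leftrightarrow(2)$ via Proposition~\ref{GNS special} and Lemma~\ref{lemma maximals} is standard and clean. For $(3)\Rightarrow(2)$ your reasoning is fine: $2\mathbf{x}\in S$ forces $2\mathbf{x}\neq\mathbf{f}$, then $\mathbf{f}-\mathbf{x}\in S^*$ and $\mathbf{x}\in\operatorname{PF}(S)$ give $\mathbf{f}\in S$, a contradiction. For $(2)\Rightarrow(3)$, the two verifications you sketch do go through once you fill them in: if $\mathbf{x}$ is maximal in $B$ with $2\mathbf{x}\neq\mathbf{f}$ and $\mathbf{x}+\mathbf{s}\in\operatorname{H}(S)$ for some $\mathbf{s}\in S^*$, then either $\mathbf{f}-\mathbf{x}-\mathbf{s}\in\operatorname{H}(S)$ (so $\mathbf{x}+\mathbf{s}\in B$, contradicting maximality) or $\mathbf{f}-\mathbf{x}-\mathbf{s}\in S$ (so $\mathbf{f}-\mathbf{x}\in S$, contradicting $\mathbf{x}\in B$); and if $2\mathbf{x}\in\operatorname{H}(S)$ the same dichotomy on $\mathbf{f}-2\mathbf{x}$ works, using that $\mathbf{x}\in\operatorname{PF}(S)$ has just been established.

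The obstacle you flag is real but easy to dispatch with exactly the symmetry you mention: there is at most one point of $B$ with $2\mathbf{x}=\mathbf{f}$, namely $\mathbf{f}/2$. If some $\mathbf{g}\in B$ has $2\mathbf{g}\neq\mathbf{f}$ and every maximal element of $B$ above $\mathbf{g}$ equals $\mathbf{f}/2$, then $\mathbf{g}<\mathbf{f}/2$, hence $\mathbf{f}-\mathbf{g}>\mathbf{f}/2$; but $\mathbf{f}-\mathbf{g}\in B$ as well, and any maximal element of $B$ above $\mathbf{f}-\mathbf{g}$ is then strictly above $\mathbf{f}/2$, hence distinct from it. So a maximal element of $B$ with $2\mathbf{x}\neq\mathbf{f}$ always exists, and your argument closes.
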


\begin{lemma} \label{Lema soma Frobenius}
Let $S \in \mathbb{N}_0^d$ be an GNS with $\operatorname{F}(S)$ the Frobenius element. 
\begin{enumerate}
	\item If there exist a component of $\operatorname{F}(S)$ that is odd, then $S$ is irreducible if and only if for all $\mathbf{h},\mathbf{h'}\in\mathbb{N}_0^d$ such that $\mathbf{h}+\mathbf{h'}=\operatorname{F}(S)$, we have that either $\mathbf{h}\in \operatorname{H}(S)$ or $\mathbf{h'}\in H(S)$ (but not both).
	\item If all components of $\operatorname{F}(S)$ are even, $S$ is irreducible if and only if for all $\mathbf{h},\mathbf{h'}\in\mathbb{N}_0^d\setminus\{\operatorname{F}(S)/2\}$ such that $\mathbf{h}+\mathbf{h'}=\operatorname{F}(S)$, we have that either $\mathbf{h}\in \operatorname{H}(S)$ or $\mathbf{h'}\in \operatorname{H}(S)$ (but not both).
\end{enumerate}
\end{lemma}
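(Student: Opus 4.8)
The plan is to reduce both equivalences to the characterization of irreducibility given by condition (3) of Proposition~\ref{especial irredutivel}, exploiting that for a Frobenius GNS every gap is $\leq \operatorname{F}(S)$ and that $\operatorname{F}(S)\neq \mathbf{0}$. Write $\operatorname{F}=\operatorname{F}(S)$. First I would unify the two items by observing that $2\mathbf{h}=\operatorname{F}$ has a solution $\mathbf{h}\in\mathbb{N}_0^d$ precisely when every component of $\operatorname{F}$ is even, the solution then being $\operatorname{F}/2$; hence in both cases the quantification ``for all $\mathbf{h},\mathbf{h'}$ summing to $\operatorname{F}$ (excluding $\operatorname{F}/2$ when it is integral)'' is exactly ``for all $\mathbf{h},\mathbf{h'}$ with $\mathbf{h}+\mathbf{h'}=\operatorname{F}$ and $2\mathbf{h}\neq\operatorname{F}$''.

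Next I would dispose of the ``at least one is a gap'' half of the stated condition, which holds for free and requires no irreducibility hypothesis: if $\mathbf{h}+\mathbf{h'}=\operatorname{F}$ with both $\mathbf{h},\mathbf{h'}\in S$, then $\operatorname{F}=\mathbf{h}+\mathbf{h'}\in S$ by closure under addition, contradicting $\operatorname{F}\in \operatorname{H}(S)$. Consequently the ``exactly one is a gap'' statement is equivalent to its ``not both are gaps'' half, namely: for every decomposition $\mathbf{h}+\mathbf{h'}=\operatorname{F}$ with $2\mathbf{h}\neq\operatorname{F}$, if $\mathbf{h}\in \operatorname{H}(S)$ then $\mathbf{h'}=\operatorname{F}-\mathbf{h}\in S$. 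Since every gap satisfies $\mathbf{h}\leq\operatorname{F}$, the difference $\operatorname{F}-\mathbf{h}$ lives in $\mathbb{N}_0^d$, and this last statement is literally condition (3) of Proposition~\ref{especial irredutivel} evaluated at the witness $\mathbf{f}=\operatorname{F}$.

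The implication from the displayed condition to irreducibility is then immediate: the ``not both are gaps'' half is condition (3) with $\mathbf{f}=\operatorname{F}\in \operatorname{H}(S)$, so Proposition~\ref{especial irredutivel} yields irreducibility. For the converse I would start from a witness $\mathbf{f}\in \operatorname{H}(S)$ supplied by condition (3) and show it must equal $\operatorname{F}$. Indeed $\mathbf{f}\leq\operatorname{F}$ because $\mathbf{f}$ is a gap, and using $\operatorname{F}\neq\mathbf{0}$ we get $\mathbf{f}\leq\operatorname{F}<2\operatorname{F}$, so $2\operatorname{F}\neq\mathbf{f}$; applying condition (3) to the gap $\mathbf{h}=\operatorname{F}$ therefore gives $\mathbf{f}-\operatorname{F}\in S\subseteq\mathbb{N}_0^d$, whence $\operatorname{F}\leq\mathbf{f}$ and thus $\mathbf{f}=\operatorname{F}$. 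With $\mathbf{f}=\operatorname{F}$, condition (3) is exactly the ``not both are gaps'' half, and combining it with the automatic ``at least one is a gap'' half reconstitutes the displayed ``exactly one'' condition in both items.

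I expect the only genuine subtlety to be this identification $\mathbf{f}=\operatorname{F}$: Proposition~\ref{especial irredutivel} only guarantees \emph{some} witness $\mathbf{f}$, and pinning it to the Frobenius element—so that the decomposition runs symmetrically around $\operatorname{F}$—is the step that makes the argument work. Everything else, including the case split governed by the parity of the components of $\operatorname{F}$ and the ``at least one gap'' half, follows directly from $\operatorname{F}\in \operatorname{H}(S)$, the maximality of $\operatorname{F}$ among gaps, and closure of $S$ under addition.
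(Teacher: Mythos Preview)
Your proposal is correct and follows essentially the same route as the paper: both directions are reduced to condition~(3) of Proposition~\ref{especial irredutivel} with the witness $\mathbf{f}=\operatorname{F}(S)$. You are in fact more careful than the paper, which silently takes $\mathbf{f}=\operatorname{F}(S)$ in the forward direction and never addresses the ``at least one of $\mathbf{h},\mathbf{h'}$ is a gap'' half; your explicit identification $\mathbf{f}=\operatorname{F}(S)$ via $\operatorname{F}-\mathbf{f}\in S$ and your closure argument for the ``at least one gap'' half fill those omissions.
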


\begin{proof}
(1) First, suppose $S$ irreducible. Let $\mathbf{h},\mathbf{h'}\in\mathbb{N}_0^d$ such that $\mathbf{h}+\mathbf{h'}=\operatorname{F}(S)$ and suppose that $\mathbf{h} \in \operatorname{H}(S)$. Since $\operatorname{F}(S)$ has a odd component, we have that $\mathbf{h}\neq\mathbf{h'}$ and $2 \mathbf{h} \neq \operatorname{F}(S)$. So, by Proposition \ref{especial irredutivel}, $\mathbf{h'} = \operatorname{F}(S) - \mathbf{h} \in S$.

Conversely, let $\mathbf{h} \in \operatorname{H}(S)$. Since $\operatorname{F}(S)$ has a odd component, we have $2 \mathbf{h} \neq \operatorname{F}(S)$. Then, by hypothesis $\mathbf{h'} = \operatorname{F}(S) - \mathbf{h} \in S$, and so, by Proposition \ref{especial irredutivel}, we have $S$ irreducible.

(2) Using that  $\mathbf{h},\mathbf{h'}\in\mathbb{N}_0^d\setminus\{\operatorname{F}(S)/2\}$ the proof is analogous to the previous item.
\end{proof}

\section{The set of special gaps that preserve the corner element}

In this section we are interested in those special gaps that preserve the corner element of GNSs by unitary extensions. We begin this section by setting this important concept.

	\begin{definition} \label{c special gaps}
		Let $S \subseteq \mathbb{N}_0^d$ be a GNS with corner element $\mathbf{c}$. An element $\mathbf{h} \in \operatorname{EH}(S)$ is called a \emph{corner special gap} (or simply a \emph{$\negc$-special gap}) of $S$ if the unitary extension $S \cup \{ \mathbf{h} \}$ has also corner element $\mathbf{c}$. We denote the set of corner special gaps of $S$ by $\operatorname{CEH}(S)$.
	\end{definition}

This notion was used in \cite{BTT} to construct the set of all GNSs with a prescribed corner element. Notice that for a numerical semigroup $S$, we have $\operatorname{CEH}(S)=\operatorname{EH}(S)\setminus\{\operatorname{F}(S)\}$. Hence, since the corner element coincides with the conductor of numerical semigroups, the elements in $\operatorname{CEH}(S)$ are exactly the special gaps of $S$ whose unitary extensions have $\operatorname{F}(S)$ as the Frobenius number, that is, the Frobenius element is preserved. It is also worth noting that in general the set $\operatorname{CEH}(S)$ might be empty. The next example illustrates the corner special gaps.

\begin{example} \label{ex}
Let $S=\mathbb{N}_0^2\setminus \{(0,1), (1,0), (1,1), (1,2), (3,0)\}$ be the GNS with corner element $(4,3)$ plotted in Figure 1. The elements of $S$ are marked with the black dots, while the elements of $\operatorname{H}(S)$ are the red ones. The elements in $\operatorname{EH}(S)=\{(0,1), (1,1), (1,2), (3,0)\}$ are distinguished with  a black circle. In this case, we have $\operatorname{CEH}(S)=\{(0,1), (1,1)\}$.

\begin{figure}[h]
\begin{tikzpicture} 
\draw [help lines] (0,0) grid (3,2);
\draw [<->] (0,2.7) node [left] {$y$} -- (0,0)
-- (3.7,0) node [below] {$x$};
\foreach \i in {0,...,3}
\draw (\i,1mm) -- (\i,-1mm) node [below] {$\i$};
\foreach \i in {0,...,2}
\draw 
(1mm,\i) -- (-1mm,\i) node [left] {$\i$}; 

\draw (0,1) circle (3.2pt);
\draw (1,1) circle (3.2pt);
\draw (1,2) circle (3.2pt);
\draw (3,0) circle (3.2pt);

\draw [mark=*, color=red] plot (0,1);
\draw [mark=*, color=red] plot (1,0);
\draw [mark=*, color=red] plot (1,1);
\draw [mark=*, color=red] plot (1,2);
\draw [mark=*, color=red] plot (3,0);
\draw [mark=*] plot (0,0);
\draw [mark=*] plot (0,2);
\draw [mark=*] plot (2,0);
\draw [mark=*] plot (2,1);
\draw [mark=*] plot (2,2);
\draw [mark=*] plot (3,1);
\draw [mark=*] plot (3,2);

\end{tikzpicture}
\caption{The GNS with corner element $\negc=(4, 3)$ in Example~\ref{ex}.}\label{fig:GNS}
\end{figure}
\label{exaf}
\end{example}

In what follows, we aim to discuss conditions in order to guarantee that an element is a corner special gap. The following sets will play a role in this purpose due to the characterization of the corner element as in Proposition~\ref{lub}.

\begin{definition}
Let $S \subseteq \mathbb{N}_0^d$ be a GNS with corner $\mathbf{c} = (c_1, \ldots , c_d)$. For each $i=1,\ldots,d$, we define the set $$\operatorname{H}(S)^{(i)} := \{ \mathbf{h} = (h_1,\ldots,h_d) \in \operatorname{H}(S) \ : \  h_i = c_i - 1 \}.$$ For the sake of simplicity, the maximal elements of $\operatorname{H}(S)^{(i)}$ with respect to the partial natural order of $\mathbb{N}_0^d$ will be denoted by $\operatorname{MH}(S)^{(i)}$.
\end{definition}

Observe that all the sets $\operatorname{H}(S)^{(i)}$ (and consequently $\operatorname{MH}(S)^{(i)}$) are non-empty. Moreover, if there exists $i\in \{1,\ldots, d\}$ such that $\operatorname{H}(S)^{(i)} = \{ \mathbf{h} \}$, then $\mathbf{h}$ cannot be a corner special gap. 


	
	\begin{lemma} \label{maximals especial gap}
	Let $S \subseteq \mathbb{N}_0^d$ be a GNS with corner $\mathbf{c}$. If $\mathbf{z} \in \operatorname{MH}(S)^{(i)}$ for some $i \in \{1,\ldots,d\}$, then $\mathbf{z} \in \operatorname{EH}(S)$. 
	\end{lemma}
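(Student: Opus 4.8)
The plan is to verify directly that $\mathbf{z}$ meets the two defining conditions of a special gap: that $\mathbf{z} \in \operatorname{PF}(S)$ and that $2\mathbf{z} \in S$. The whole argument rests on a single observation about points lying strictly above $\mathbf{z}$ in the natural partial order. First I would record the basic constraint on gaps coming from Proposition~\ref{lub}: since $\mathbf{c} = \operatorname{lub}(\operatorname{H}(S)) + \mathbf{1}$, every gap $\mathbf{h} \in \operatorname{H}(S)$ satisfies $h_j \le c_j - 1$ for all $j$. Equivalently, by condition~(1) in the definition of the corner element, once some coordinate of a point reaches $c_j$ or more the point already lies in $S$.

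The key step is the following claim: \emph{every} $\mathbf{w} \in \mathbb{N}_0^d$ with $\mathbf{w} > \mathbf{z}$ belongs to $S$. To prove it I would focus on the $i$-th coordinate. Since $\mathbf{w} \ge \mathbf{z}$ we have $w_i \ge z_i = c_i - 1$. If $w_i \ge c_i$, then $\mathbf{w} \in S$ by condition~(1) of the corner element. Otherwise $w_i = c_i - 1$; were $\mathbf{w}$ a gap, it would lie in $\operatorname{H}(S)^{(i)}$, and since $\mathbf{w} > \mathbf{z}$ this would contradict the maximality of $\mathbf{z}$ in $\operatorname{H}(S)^{(i)}$. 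Hence $\mathbf{w} \in S$ in either case.

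With the claim in hand, both requirements follow at once. For $\mathbf{z} \in \operatorname{PF}(S)$, take any $\mathbf{s} \in S^*$; since $\mathbf{s} \ge \mathbf{0}$ and $\mathbf{s} \neq \mathbf{0}$ we have $\mathbf{z} + \mathbf{s} > \mathbf{z}$, so $\mathbf{z} + \mathbf{s} \in S$, and as $\mathbf{z} \in \operatorname{H}(S)$ by hypothesis this gives $\mathbf{z} \in \operatorname{PF}(S)$. For $2\mathbf{z} \in S$, note that $\mathbf{z}$ is a gap and hence $\mathbf{z} \neq \mathbf{0}$, so $2\mathbf{z} = \mathbf{z} + \mathbf{z} > \mathbf{z}$ and the claim again yields $2\mathbf{z} \in S$. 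Combining the two conclusions gives $\mathbf{z} \in \operatorname{EH}(S)$, as desired.

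I expect the only delicate point, and the heart of the proof, to be the dichotomy in the claim at the distinguished coordinate $i$: property~(1) of the corner element disposes of the case where the $i$-th coordinate grows past $c_i$, while the maximality of $\mathbf{z}$ within $\operatorname{H}(S)^{(i)}$ is exactly what rules out the borderline case $w_i = c_i - 1$. Everything else is a routine unwinding of the definitions of $\operatorname{PF}(S)$ and $\operatorname{EH}(S)$.
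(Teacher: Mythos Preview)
Your proof is correct and follows essentially the same route as the paper: both argue that any element strictly above $\mathbf{z}$ must lie in $S$ by examining the $i$-th coordinate and invoking the maximality of $\mathbf{z}$ in $\operatorname{H}(S)^{(i)}$, then apply this to $\mathbf{z}+\mathbf{s}$ and $2\mathbf{z}$. Your version makes the dichotomy $w_i \ge c_i$ versus $w_i = c_i - 1$ explicit, whereas the paper compresses this into the single observation that a gap $\mathbf{z}' > \mathbf{z}$ would necessarily land in $\operatorname{H}(S)^{(i)}$.
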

 \begin{proof}
     Let $i$ be an index such that $\mathbf{z} \in \operatorname{MH}(S)^{(i)}$. For $\negs\in S^*$, if $\negz'=\negz+\negs \in \operatorname{H}(S)$, as $S$ has corner element $\negc$ and $\negz<\negz'$, we must have $\negz'\in \operatorname{H}(S)^{(i)}$, which contradicts the maximality of $\negz$ in $\operatorname{H}(S)^{(i)}$. Thus, $\negz'\in S$ and therefore $\negz\in \operatorname{PF}(S)$.
     Now, the same argument gives $2\negz \in S$ because $2\negz>\negz$. Hence, we conclude that $\negz\in \operatorname{EH}(S)$.
 \end{proof}
	
	

Proposition~\ref{lub} shows that the corner element of a GNS $S$ is completely characterized by $\operatorname{H}(S)$. As $\operatorname{MH}(S)^{(i)}\neq \emptyset$ for all $i\in \{1,\ldots, d\}$, Lemma~\ref{maximals especial gap} yields that the corner element is determined by its special gaps.

%
\begin{corollary}\label{lub lacunas especiais}
Let $S\subseteq \mathbb{N}_0^d$ be a GNS with positive genus and corner element $\negc$. Then
$$\negc = \lub(\operatorname{EH}(S)) + \neg1.$$
\end{corollary}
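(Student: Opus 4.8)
The plan is to prove the identity $\negc = \lub(\operatorname{EH}(S)) + \neg1$ by sandwiching the vector $\lub(\operatorname{EH}(S))$ between two bounds, using Proposition~\ref{lub} as the anchor. By that proposition we already know $\negc = \lub(\operatorname{H}(S)) + \neg1$, so it suffices to establish the single vector identity
\begin{equation*}
\lub(\operatorname{EH}(S)) = \lub(\operatorname{H}(S)).
\end{equation*}
Since $\operatorname{EH}(S) \subseteq \operatorname{H}(S)$ by definition of special gaps, the inequality $\lub(\operatorname{EH}(S)) \leq \lub(\operatorname{H}(S))$ holds immediately, because the coordinatewise maximum taken over a subset can only be smaller. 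The whole content of the statement therefore lies in the reverse inequality, and proving it coordinate by coordinate is where the real work sits.

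For the reverse inequality I would argue one coordinate $i \in \{1,\ldots,d\}$ at a time. Writing $\negc = (c_1,\ldots,c_d)$, I claim that the $i$-th coordinate of $\lub(\operatorname{H}(S))$ equals $c_i - 1$; this is exactly Proposition~\ref{lub} read componentwise. Now the set $\operatorname{H}(S)^{(i)}$ consists of precisely those gaps whose $i$-th coordinate attains this maximal value $c_i - 1$, and it is nonempty by the remark following its definition. Pick any maximal element $\negz \in \operatorname{MH}(S)^{(i)}$; by Lemma~\ref{maximals especial gap} we have $\negz \in \operatorname{EH}(S)$, and its $i$-th coordinate is $c_i - 1$ by construction. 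Hence the $i$-th coordinate of $\lub(\operatorname{EH}(S))$ is at least $c_i - 1$, which is the $i$-th coordinate of $\lub(\operatorname{H}(S))$. Running this over all $i$ gives $\lub(\operatorname{EH}(S)) \geq \lub(\operatorname{H}(S))$ coordinatewise.

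Combining the two inequalities yields $\lub(\operatorname{EH}(S)) = \lub(\operatorname{H}(S))$, and adding $\neg1$ and invoking Proposition~\ref{lub} once more finishes the proof. The main obstacle, such as it is, is recognizing that the corner element is \emph{witnessed} in each coordinate separately by a gap in $\operatorname{H}(S)^{(i)}$, and that Lemma~\ref{maximals especial gap} promotes a maximal such gap into $\operatorname{EH}(S)$ without lowering that coordinate; once the problem is reduced to the $\lub$ identity and attacked coordinatewise, everything else is bookkeeping. It is worth emphasizing that the positive-genus hypothesis is essential so that the coordinates $c_i - 1$ are genuinely the relevant maxima and the sets $\operatorname{H}(S)^{(i)}$ are nonempty, which is what makes Lemma~\ref{maximals especial gap} applicable.
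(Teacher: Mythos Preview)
Your proof is correct and follows essentially the same approach as the paper: both use Proposition~\ref{lub} to reduce to showing $\lub(\operatorname{EH}(S)) = \lub(\operatorname{H}(S))$, and both obtain the nontrivial inequality coordinatewise by invoking Lemma~\ref{maximals especial gap} on an element of $\operatorname{MH}(S)^{(i)}$ to produce a special gap with $i$-th coordinate $c_i-1$. The paper states this more tersely (it is recorded as an immediate corollary of the lemma), but your expanded argument is exactly the intended one.
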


\begin{remark}\label{remark maximal gaps}
    The set $\operatorname{EH}(S)$ is not the smallest subset of $\operatorname{H}(S)$ that allows us to describe the corner element of $S$ in the fashion of the previous result. Actually, its subset
    $\mathcal{B}=\bigcup_{i=1}^d \operatorname{MH}(S)^{(i)}$ also satisfies $\negc = \lub(\mathcal{B}) + \neg1$.
\end{remark}

\begin{corollary} \label{lemma auxiliar}
    Let $S\subseteq \mathbb{N}_0^d$ be a GNS with corner element $\negc$. If $S$ has a unique maximal element $\negh$ in $\operatorname{EH}(S)$ with respect to the natural partial order of $\mathbb{N}_0^d$, then $S$ is Frobenius with $\operatorname{F}(S)=\negh$.
\end{corollary}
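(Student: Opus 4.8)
The plan is to leverage Corollary~\ref{lub lacunas especiais}, which expresses the corner element as $\negc = \lub(\operatorname{EH}(S)) + \mathbf{1}$, together with the characterization of Frobenius GNSs recorded just after Proposition~\ref{lub}: namely, $S$ is Frobenius with $\operatorname{F}(S) = \negc - \mathbf{1}$ precisely when $\negc - \mathbf{1} \in \operatorname{H}(S)$. Thus it suffices to identify $\lub(\operatorname{EH}(S))$ with the hypothesized unique maximal element $\negh$, and then to check that $\negc - \mathbf{1}$ really lands in $\operatorname{H}(S)$.

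First I would observe that $\operatorname{EH}(S)$ is a finite set, being a subset of the finite set $\operatorname{H}(S)$. The point is that in a finite poset a unique maximal element is automatically the greatest element. Indeed, given any $\negx \in \operatorname{EH}(S)$ with $\negx \neq \negh$, the element $\negx$ is not maximal, so there is an element of $\operatorname{EH}(S)$ strictly above it; iterating this and invoking finiteness produces a maximal element lying above $\negx$, which by uniqueness must be $\negh$, whence $\negx \leq \negh$. Since every element of $\operatorname{EH}(S)$ is thus $\leq \negh$ and $\negh$ itself belongs to $\operatorname{EH}(S)$, the coordinatewise maximum is exactly $\negh$, i.e.\ $\lub(\operatorname{EH}(S)) = \negh$.

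Combining this with Corollary~\ref{lub lacunas especiais} gives $\negc = \negh + \mathbf{1}$, equivalently $\negc - \mathbf{1} = \negh$. As $\negh \in \operatorname{EH}(S) \subseteq \operatorname{PF}(S) \subseteq \operatorname{H}(S)$, we conclude $\negc - \mathbf{1} \in \operatorname{H}(S)$. The Frobenius criterion recalled above then yields at once that $S$ is a Frobenius GNS with $\operatorname{F}(S) = \negc - \mathbf{1} = \negh$, as claimed.

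The only genuinely delicate point, and the step I would be most careful to justify, is the passage from ``unique maximal element'' to ``greatest element'': this inference can fail in infinite posets, but it is valid here precisely because $\operatorname{EH}(S)$ is finite. Everything else is a direct substitution into the already-established identity of Corollary~\ref{lub lacunas especiais} followed by an application of the Frobenius characterization, so I do not anticipate any further obstacle.
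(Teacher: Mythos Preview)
Your proof is correct and follows essentially the same route as the paper's own argument: identify $\lub(\operatorname{EH}(S))$ with $\negh$, apply Corollary~\ref{lub lacunas especiais} to get $\negc-\mathbf{1}=\negh\in\operatorname{H}(S)$, and invoke the Frobenius characterization. The only difference is that the paper asserts $\lub(\operatorname{EH}(S))=\negh$ without comment, whereas you carefully justify the passage from ``unique maximal'' to ``greatest'' via finiteness of $\operatorname{EH}(S)$; this extra care is warranted and makes your write-up more complete than the paper's.
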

\begin{proof}
    In this case, $\operatorname{lub}(\operatorname{EH}(S))=\negh$. Hence, we obtain from Corollary~\ref{lub lacunas especiais} that $\negh=\negc-\mathbf{1}\in \operatorname{EH}(S)$, which implies that $S$ is Frobenius and  $\operatorname{F}(S)=\negh$.
\end{proof}


The subsequent results gives sufficient conditions so that a special gap preserves the corner element by unitary extensions.

\begin{proposition}
		Let $S \subseteq \mathbb{N}_0^d$ be a GNS with corner element $\mathbf{c}$. If $\mathbf{h}\in \operatorname{EH}(S)$ satisfies $\mathbf{h} < \mathbf{g}$ for some $\negg\in \operatorname{H}(S)$, then $\mathbf{h}\in \operatorname{CEH}(S)$.
	\label{c-especial dois elementos}
\end{proposition}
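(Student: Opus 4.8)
The goal is to show that if $\mathbf{h} \in \operatorname{EH}(S)$ and there exists $\negg \in \operatorname{H}(S)$ with $\mathbf{h} < \negg$, then the unitary extension $S \cup \{\mathbf{h}\}$ preserves the corner element $\mathbf{c}$. By Proposition~\ref{GNS special}, the extension $S \cup \{\mathbf{h}\}$ is already a GNS since $\mathbf{h} \in \operatorname{EH}(S)$, so it has a well-defined corner element; what must be verified is that this corner element equals $\mathbf{c}$. Writing $T = S \cup \{\mathbf{h}\}$, we have $\operatorname{H}(T) = \operatorname{H}(S) \setminus \{\mathbf{h}\}$, and by Proposition~\ref{lub} the corner element of $T$ is $\operatorname{lub}(\operatorname{H}(T)) + \mathbf{1}$. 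Thus it suffices to prove that $\operatorname{lub}(\operatorname{H}(S) \setminus \{\mathbf{h}\}) = \operatorname{lub}(\operatorname{H}(S))$.

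The plan is to argue coordinate by coordinate. Recall that the $\operatorname{lub}$ is computed componentwise as the maximum of each coordinate over the set. Removing the single element $\mathbf{h}$ from $\operatorname{H}(S)$ can only lower a given coordinate of the $\operatorname{lub}$ if $\mathbf{h}$ is the \emph{unique} gap attaining the maximum in that coordinate. The key observation is that the hypothesis $\mathbf{h} < \negg$ with $\negg \in \operatorname{H}(S)$ prevents this: for each coordinate $i$, we have $h_i \le g_i$, so the gap $\negg$ (which remains in $\operatorname{H}(S) \setminus \{\mathbf{h}\}$, since $\negg \neq \mathbf{h}$ as $\mathbf{h} < \negg$ is strict) already witnesses a value at least $h_i$ in coordinate $i$. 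Therefore, for every coordinate $i$, the maximum of the $i$-th coordinate over $\operatorname{H}(S) \setminus \{\mathbf{h}\}$ is still at least $h_i$, and hence at least as large as the contribution of $\mathbf{h}$ to $\operatorname{lub}(\operatorname{H}(S))$.

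More precisely, I would show $\operatorname{lub}(\operatorname{H}(S) \setminus \{\mathbf{h}\}) \ge \mathbf{h}$ componentwise using $\negg$, and combine this with the trivial inequality $\operatorname{lub}(\operatorname{H}(S) \setminus \{\mathbf{h}\}) \le \operatorname{lub}(\operatorname{H}(S))$. Since $\operatorname{lub}(\operatorname{H}(S)) = \max\{\operatorname{lub}(\operatorname{H}(S) \setminus \{\mathbf{h}\}), \mathbf{h}\}$ taken componentwise (because adjoining a single element to a set can raise each coordinate of the $\operatorname{lub}$ only up to that element's coordinate), and we have just shown the first term already dominates $\mathbf{h}$ in every coordinate, it follows that $\operatorname{lub}(\operatorname{H}(S) \setminus \{\mathbf{h}\}) = \operatorname{lub}(\operatorname{H}(S))$. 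Applying Proposition~\ref{lub} to both $S$ and $T$ then gives $\mathbf{c}(T) = \operatorname{lub}(\operatorname{H}(T)) + \mathbf{1} = \operatorname{lub}(\operatorname{H}(S)) + \mathbf{1} = \mathbf{c}$, which is exactly the statement that $\mathbf{h} \in \operatorname{CEH}(S)$.

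I expect the argument to be short and the only point requiring care is the bookkeeping around the componentwise $\operatorname{lub}$: one must confirm that $\negg$ genuinely survives in $\operatorname{H}(S) \setminus \{\mathbf{h}\}$ (guaranteed by strictness of $\mathbf{h} < \negg$) and that the positive-genus hypothesis needed to invoke Proposition~\ref{lub} holds for both $S$ and the extension $T$. The latter is immediate: $S$ has positive genus because $\mathbf{h} \in \operatorname{H}(S)$ is itself a gap, and $T$ retains at least the gap $\negg$, so $T$ also has positive genus. No substantial obstacle arises; the content is entirely captured by the observation that a strictly larger surviving gap shields every coordinate of $\mathbf{h}$ from affecting the $\operatorname{lub}$.
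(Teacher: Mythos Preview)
Your argument is correct. The underlying idea matches the paper's: removing $\mathbf{h}$ from $\operatorname{H}(S)$ does not change the $\operatorname{lub}$ because a strictly larger gap $\mathbf{g}$ survives and dominates $\mathbf{h}$ in every coordinate.

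The packaging differs slightly. The paper first invokes Lemma~\ref{corner subset} to get $\mathbf{c}(T)\le\mathbf{c}$, then argues that $\mathbf{h}$ cannot lie in any $\operatorname{MH}(S)^{(i)}$ (since $\mathbf{h}<\mathbf{g}$), so the sets $\operatorname{MH}(S)^{(i)}$ all persist in $\operatorname{H}(T)$; by Remark~\ref{remark maximal gaps} these sets already determine $\mathbf{c}$, giving $\mathbf{c}\le\mathbf{c}(T)$. You bypass that auxiliary machinery and work directly with the componentwise $\operatorname{lub}$ formula, using the single witness $\mathbf{g}$ in every coordinate. Your route is a little more self-contained; the paper's route reuses infrastructure that it also needs for Proposition~\ref{lemma hi} and Proposition~\ref{lemma H(S)i}. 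Either way the content is the same one-line observation.
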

\begin{proof}
         Let $T=S\cup \{\mathbf{h}\}$. Since $T$ is a GNS by Proposition~\ref{GNS special}, it follows from Lemma~\ref{corner subset} that $\mathbf{c}(T) \leq \mathbf{c}$. Now, observe that $\negh$ cannot belong to any $\operatorname{MH}(S)^{(i)}$ because $\mathbf{h} < \mathbf{g}$. Hence, as $\operatorname{H}(T)=\operatorname{H}(S)\setminus\{\negh\}$, we have $\operatorname{MH}(S)^{(i)} \subseteq \operatorname{H}(T)$ for all $i\in \{1, \ldots, d\}$, and consequently $\mathcal{B}=\bigcup_{i=1}^d \operatorname{MH}(S)^{(i)}\subseteq \operatorname{H}(T)$. In view of  Remark~\ref{remark maximal gaps} and Proposition~\ref{lub}, we conclude that $\negc\leq \mathbf{c}(T)$. Therefore, we obtain $\mathbf{c}(T)=\negc$, which implies $\mathbf{h} \in \operatorname{CEH}(S)$.
\end{proof}

	\begin{proposition} \label{lemma hi}
		Let $S \subseteq \mathbb{N}_0^d$ be a GNS with corner element $\mathbf{c}$. If $\mathbf{h} \in \operatorname{EH}(S)$ satisfies $\negh \notin \bigcup_{i=1}^d \operatorname{H}(S)^{(i)}$, then $\mathbf{h}\in \operatorname{CEH}(S)$.
	\end{proposition}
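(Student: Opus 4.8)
The plan is to mirror the strategy of Proposition~\ref{c-especial dois elementos}, establishing the two inequalities $\mathbf{c}(T) \leq \negc$ and $\negc \leq \mathbf{c}(T)$ for the unitary extension $T = S \cup \{\negh\}$. First I would note that since $\negh \in \operatorname{EH}(S)$, Proposition~\ref{GNS special} guarantees that $T$ is a GNS, and from $S \subseteq T$ together with Lemma~\ref{corner subset} I immediately obtain $\mathbf{c}(T) \leq \negc$.

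For the reverse inequality, the key observation is that the hypothesis $\negh \notin \bigcup_{i=1}^d \operatorname{H}(S)^{(i)}$ forces $\negh \notin \operatorname{MH}(S)^{(i)}$ for every $i$, since each $\operatorname{MH}(S)^{(i)}$ is contained in $\operatorname{H}(S)^{(i)}$. Consequently the set $\mathcal{B} = \bigcup_{i=1}^d \operatorname{MH}(S)^{(i)}$ does not contain $\negh$, so that $\mathcal{B} \subseteq \operatorname{H}(S) \setminus \{\negh\} = \operatorname{H}(T)$.

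I would then invoke Remark~\ref{remark maximal gaps}, which gives $\negc = \lub(\mathcal{B}) + \neg1$. Since $\mathcal{B} \subseteq \operatorname{H}(T)$ yields $\lub(\mathcal{B}) \leq \lub(\operatorname{H}(T))$, Proposition~\ref{lub} applied to $T$ produces $\negc = \lub(\mathcal{B}) + \neg1 \leq \lub(\operatorname{H}(T)) + \neg1 = \mathbf{c}(T)$. Combining the two inequalities gives $\mathbf{c}(T) = \negc$, hence $\negh \in \operatorname{CEH}(S)$ by the definition of corner special gap.

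As for the main obstacle, there is none of real substance: the argument is essentially identical to that of Proposition~\ref{c-especial dois elementos}, and the only point that needs care is recognizing that the hypothesis on $\negh$ immediately excludes it from each $\operatorname{MH}(S)^{(i)}$, which is exactly what the previous proposition had to deduce indirectly from the condition $\negh < \negg$. Everything else follows from the interplay between Remark~\ref{remark maximal gaps} and Proposition~\ref{lub}.
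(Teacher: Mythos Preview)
Your proof is correct and follows essentially the same approach as the paper's: both obtain $\mathbf{c}(T)\leq \negc$ from Lemma~\ref{corner subset}, observe that the hypothesis keeps $\negh$ out of every $\operatorname{MH}(S)^{(i)}$, and then use Remark~\ref{remark maximal gaps} with Proposition~\ref{lub} (the paper simply refers back to ``the same argument of the last proof'') to conclude $\negc\leq\mathbf{c}(T)$. Your version just spells out the final step in slightly more detail.
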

	\begin{proof}
		Since the GNS $T=S\cup \{\mathbf{h}\}$ contains $S$,  Lemma~\ref{corner subset} yields $\mathbf{c}(T) \leq \mathbf{c}$. As $\negh \notin \operatorname{H}(S)^{(i)}$ for all $i \in \{1,\ldots,d\}$, we have that $\negh$ cannot belong to any of the sets $\operatorname{MH}(S)^{(i)}$. Consequently, we obtain $\operatorname{MH}(S)^{(i)} \subseteq \operatorname{H}(T)$ and the same argument of the last proof guarantees that $\mathbf{h} \in \operatorname{CEH}(S)$.
	\end{proof}

	\begin{proposition} \label{lemma H(S)i}
		Let $S \subseteq \mathbb{N}_0^d$ be a GNS with corner element $\mathbf{c}$. If $\mathbf{h} \in \operatorname{EH}(S) \setminus \operatorname{CEH}(S)$, then there exists $i\in \{1,\ldots, d\}$ such that $\operatorname{H}(S)^{(i)} = \{ \mathbf{h} \}$.
	\end{proposition}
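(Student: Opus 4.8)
The plan is to convert the hypothesis $\mathbf{h}\notin\operatorname{CEH}(S)$ into a statement about coordinate-wise maxima of gaps, using the identity $\mathbf{c}=\operatorname{lub}(\operatorname{H}(S))+\mathbf{1}$ from Proposition~\ref{lub}. First I would set $T=S\cup\{\mathbf{h}\}$; since $\mathbf{h}\in\operatorname{EH}(S)$, Proposition~\ref{GNS special} guarantees that $T$ is a GNS, with $\operatorname{H}(T)=\operatorname{H}(S)\setminus\{\mathbf{h}\}$. As $S\subseteq T$, Lemma~\ref{corner subset} yields $\mathbf{c}(T)\leq\mathbf{c}$, while the assumption $\mathbf{h}\notin\operatorname{CEH}(S)$ means exactly that $\mathbf{c}(T)\neq\mathbf{c}$; hence $\mathbf{c}(T)<\mathbf{c}$, and there is an index $i$ with $c_i(T)<c_i$.

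The crucial step is then to read off what the strict drop $c_i(T)<c_i$ says about the gaps. Applying Proposition~\ref{lub} to both $S$ and $T$, the $i$-th coordinate of $\mathbf{c}$ equals $\max\{x_i:\mathbf{x}\in\operatorname{H}(S)\}+1=c_i$, whereas the $i$-th coordinate of $\mathbf{c}(T)$ equals $\max\{x_i:\mathbf{x}\in\operatorname{H}(S)\setminus\{\mathbf{h}\}\}+1$. The inequality therefore amounts to $\max\{x_i:\mathbf{x}\in\operatorname{H}(S)\setminus\{\mathbf{h}\}\}<c_i-1$. Since the maximum of the $i$-th coordinate over all of $\operatorname{H}(S)$ is $c_i-1$ but this value is no longer attained after $\mathbf{h}$ is deleted, the element $\mathbf{h}$ must be the unique gap with $i$-th coordinate equal to $c_i-1$. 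This is precisely the desired conclusion $\operatorname{H}(S)^{(i)}=\{\mathbf{h}\}$.

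I expect the main obstacle to be the purely combinatorial heart of the argument: that removing a single point from a finite subset of $\mathbb{N}_0^d$ lowers its least upper bound in coordinate $i$ if and only if that point is the unique maximizer of the $i$-th coordinate. Once this is stated cleanly, the proposition sharpens the contrapositive of Proposition~\ref{lemma hi} (which only gives $\mathbf{h}\in\operatorname{H}(S)^{(i)}$ for some $i$) to an honest uniqueness statement. A minor point to dispatch is the degenerate case where $T$ has genus zero, so that Proposition~\ref{lub} does not apply to $T$: this occurs only when $\operatorname{H}(S)=\{\mathbf{h}\}$, and there Proposition~\ref{lub} applied to $S$ alone already forces $h_i=c_i-1$ for every $i$, making $\operatorname{H}(S)^{(i)}=\{\mathbf{h}\}$ hold trivially.
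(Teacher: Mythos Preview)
Your proposal is correct and follows essentially the same line as the paper: both set $T=S\cup\{\mathbf{h}\}$ and use the $\operatorname{lub}$ description of the corner (Proposition~\ref{lub}) to see that the strict drop $\mathbf{c}(T)<\mathbf{c}$ in some coordinate $i$ forces $\mathbf{h}$ to be the unique gap with $i$-th coordinate $c_i-1$. The paper packages this as a proof by contradiction after first invoking Proposition~\ref{lemma hi}, whereas you argue the uniqueness directly and also handle the genus-zero edge case for $T$ explicitly; these are minor organizational differences, not a different approach.
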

	\begin{proof}
		Let $\mathbf{h}\in \operatorname{EH}(S) \setminus \operatorname{CEH}(S)$ and consider the GNS $T=S\cup \{\negh\}$. As $\negh\notin \operatorname{CEH}(S)$, it follows from Proposition~\ref{lemma hi} that there exists $i \in \{1,\ldots,d\}$ such that $\negh\in \operatorname{H}(S)^{(i)}$. Now, suppose that for all $i\in \{1,\ldots, d\}$ such that $\negh\in \operatorname{H}(S)^{(i)}$ we have $|\operatorname{H}(S)^{(i)}| \geq 2$. Hence, as $\operatorname{H}(T)=\operatorname{H}(S)\setminus \{\negh\}$, $\negc(T)$ coincides with $\negc$ by Proposition~\ref{lub}, since  $\operatorname{H}(T)^{(i)}$ contains an element of $\operatorname{H}(S)^{(i)}$ for all $i\in \{1,\ldots, d\}$. It implies that $\mathbf{h} \in \operatorname{CEH}(S)$, a contradiction. 
	\end{proof}

 With the previous results, we have the following about the set of corner special gaps.

 	\begin{proposition} \label{prop c especial}
		
Let $S \subseteq \mathbb{N}_0^d$ be a GNS such that $|\operatorname{EH}(S)|\geq d + 1$. Then $$\operatorname{CEH}(S) \neq \emptyset.$$
\end{proposition}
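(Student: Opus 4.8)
The plan is to argue by contraposition, exploiting Proposition~\ref{lemma H(S)i} as the crucial mechanism: it guarantees that any special gap failing to preserve the corner element forces one of the layer sets $\operatorname{H}(S)^{(i)}$ to collapse to a single point. The strategy is then a counting (pigeonhole) argument on the indices $i \in \{1,\ldots,d\}$.

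First I would suppose, for the sake of contradiction, that $\operatorname{CEH}(S) = \emptyset$. Then every element $\mathbf{h} \in \operatorname{EH}(S)$ lies in $\operatorname{EH}(S) \setminus \operatorname{CEH}(S)$, so Proposition~\ref{lemma H(S)i} applies to each such $\mathbf{h}$ and produces an index $i \in \{1,\ldots,d\}$ with $\operatorname{H}(S)^{(i)} = \{\mathbf{h}\}$. Selecting one such index for each gap defines a map $\varphi \colon \operatorname{EH}(S) \to \{1,\ldots,d\}$, where $\varphi(\mathbf{h})$ denotes a chosen index satisfying $\operatorname{H}(S)^{(\varphi(\mathbf{h}))} = \{\mathbf{h}\}$.

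The heart of the argument is that $\varphi$ is injective. Indeed, if $\varphi(\mathbf{h}) = \varphi(\mathbf{h}') = i$ for some $\mathbf{h}, \mathbf{h}' \in \operatorname{EH}(S)$, then by construction $\{\mathbf{h}\} = \operatorname{H}(S)^{(i)} = \{\mathbf{h}'\}$, which forces $\mathbf{h} = \mathbf{h}'$. Since $\varphi$ is an injective map from $\operatorname{EH}(S)$ into a set of cardinality $d$, we deduce $|\operatorname{EH}(S)| \leq d$. This contradicts the hypothesis $|\operatorname{EH}(S)| \geq d+1$, and therefore $\operatorname{CEH}(S) \neq \emptyset$.

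I do not anticipate a genuine obstacle here; the entire content is packaged into Proposition~\ref{lemma H(S)i}, and the only subtlety to get right is recognizing that the \emph{singleton} conclusion $\operatorname{H}(S)^{(i)} = \{\mathbf{h}\}$ is exactly what makes distinct special gaps land on distinct indices, so that the pigeonhole principle yields the bound $|\operatorname{EH}(S)| \leq d$ in the absence of corner special gaps.
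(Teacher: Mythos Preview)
Your proof is correct and follows essentially the same approach as the paper: both rely on Proposition~\ref{lemma H(S)i} to assign to each non--corner special gap an index $i$ with $\operatorname{H}(S)^{(i)}$ a singleton, and then invoke the pigeonhole principle on $\{1,\ldots,d\}$ to derive a contradiction. Your formulation via an injective map $\varphi\colon \operatorname{EH}(S)\to\{1,\ldots,d\}$ is slightly more streamlined than the paper's, which inserts a preliminary case (handled by Proposition~\ref{lemma hi}) that your argument renders unnecessary.
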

\begin{proof}
		Let $\{\mathbf{h}_1, \ldots , \mathbf{h}_{d+1} \} \subseteq \operatorname{EH}(S)$. If there exists $j\in\{1,\ldots, d+1\}$ such that $\negh_j\notin \operatorname{H}(S)^{(i)}$ for all $i\in \{1,\ldots,d\}$, then $\mathbf{h}_j \in \operatorname{CEH}(S)$ by Proposition~\ref{lemma hi}. Otherwise, we suppose that $\{\mathbf{h}_1, \ldots , \mathbf{h}_{d+1} \} \cap \operatorname{CEH}(S) = \emptyset$. Hence, Proposition~\ref{lemma H(S)i} yields that for all $j\in \{1,\ldots, d+1\}$ there exists $i_j\in \{1,\ldots, d\}$ such that $\operatorname{H}(S)^{(i_j)}=\{\negh_j\}$. But this gives a contradiction since by the Pigeonhole principle there exists $k\in \{1,\ldots, d\}$ such that $|\operatorname{H}(S)^{(k)}|\geq 2$. Therefore, in every case, we obtain $\mathbf{h}_j \in \operatorname{CEH}(S)$ for some $j \in \{1,\ldots,d+1\}$, which proves the result.
\end{proof}

Successive applications of the previous result yield the following.

\begin{corollary} \label{CEHk}
Let $S \subseteq \mathbb{N}_0^d$ be a GNS such that $|\operatorname{EH}(S)| \geq d + k$, for some positive integer $k$. Then $$|\operatorname{CEH}(S)| \geq k\;.$$
\end{corollary}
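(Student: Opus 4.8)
The plan is to prove Corollary~\ref{CEHk} by induction on the positive integer $k$, using Proposition~\ref{prop c especial} as the base case and the essential engine of the argument. The underlying idea is that whenever the special gaps are abundant enough ($|\operatorname{EH}(S)|\geq d+1$), we can extract at least one corner special gap; the point is then to peel off such a corner special gap, pass to a slightly smaller GNS, and repeat, keeping careful track of how the relevant cardinalities decrease.

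For the base case $k=1$, the statement $|\operatorname{CEH}(S)|\geq 1$ is exactly Proposition~\ref{prop c especial}, so nothing further is needed there. For the inductive step, assume the result holds for $k-1$ and suppose $|\operatorname{EH}(S)|\geq d+k$ with $k\geq 2$. First I would invoke Proposition~\ref{prop c especial} to obtain some $\negh\in \operatorname{CEH}(S)$, and form the unitary extension $T=S\cup\{\negh\}$. By Definition~\ref{c special gaps}, $T$ is a GNS with the same corner element $\negc$ as $S$, and $\operatorname{H}(T)=\operatorname{H}(S)\setminus\{\negh\}$. The heart of the argument is to show that $T$ still has many special gaps, so that the induction hypothesis applies to $T$ and furnishes $k-1$ corner special gaps of $T$; combining these with $\negh$ should yield the desired $k$ corner special gaps of $S$.

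The main obstacle I anticipate is controlling the relationship between $\operatorname{EH}(S)$, $\operatorname{EH}(T)$, $\operatorname{CEH}(S)$, and $\operatorname{CEH}(T)$ under the extension, since passing to $T$ can both destroy old special gaps and create new ones. Concretely, I would need to verify two things: that removing $\negh$ decreases the count of special gaps by at most a controlled amount (ideally showing $|\operatorname{EH}(T)|\geq |\operatorname{EH}(S)|-1\geq d+(k-1)$, which would let the induction hypothesis on $T$ produce $|\operatorname{CEH}(T)|\geq k-1$), and that the $k-1$ corner special gaps of $T$ obtained this way, together with $\negh$, are genuinely distinct elements all lying in $\operatorname{CEH}(S)$. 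The distinctness from $\negh$ is automatic since those elements lie in $\operatorname{H}(T)=\operatorname{H}(S)\setminus\{\negh\}$ whereas $\negh\notin\operatorname{H}(T)$; the subtler point is arguing that a corner special gap of $T$ remains a corner special gap of $S$, which should follow because $S\subseteq T$ share the corner element $\negc$ and an extension preserving $\negc$ from $T$ also preserves it from $S$ via Corollary~\ref{lemma corner subset}.

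Because the author explicitly writes ``Successive applications of the previous result,'' the intended proof is exactly this inductive peeling argument, so I would keep the write-up short: state the induction, apply Proposition~\ref{prop c especial} once per step to pull out one corner special gap, replace $S$ by $S\cup\{\negh\}$, and note that after extracting such gaps $j$ times the remaining special-gap count has dropped by at most $j$, so the hypothesis $|\operatorname{EH}(S)|\geq d+k$ guarantees we can repeat the extraction $k$ times and accumulate $k$ distinct corner special gaps. The one place demanding genuine care—and where I would spend most of the verification effort—is the bookkeeping ensuring that each newly produced corner special gap is both new (not previously extracted) and legitimately a corner special gap of the original $S$.
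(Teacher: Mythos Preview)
Your inductive strategy has a genuine gap at the step where you pass to the unitary extension $T=S\cup\{\negh\}$ and hope that $|\operatorname{EH}(T)|\geq |\operatorname{EH}(S)|-1$. This inequality is false in general: adding the corner special gap $\negh$ to $S$ can destroy several other special gaps at once, because an element $\negx\in\operatorname{EH}(S)\setminus\{\negh\}$ survives in $\operatorname{EH}(T)$ only if $\negx+\negh\in S$, and nothing guarantees this. For a concrete failure already in dimension $d=1$, take the numerical semigroup $S=\langle 4,6,11,13\rangle$, with gap set $\{1,2,3,5,7,9\}$ and $\operatorname{EH}(S)=\{2,7,9\}$, so that $|\operatorname{EH}(S)|=d+2$ and $\operatorname{CEH}(S)=\{2,7\}$. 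Forming $T=S\cup\{2\}$, the relation $7+2=9\notin T$ kills $7$ as a pseudo-Frobenius element and one finds $\operatorname{EH}(T)=\{9\}$; the choice $T'=S\cup\{7\}$ behaves the same way, since $2+7=9\notin T'$ forces $\operatorname{EH}(T')=\{9\}$. In either case $|\operatorname{EH}(T)|=1<d+1$, so the induction hypothesis cannot be invoked on $T$. The second step you flag---showing $\operatorname{CEH}(T)\subseteq\operatorname{CEH}(S)$---is also not automatic, since $\negx\in\operatorname{EH}(T)$ need not lie in $\operatorname{EH}(S)$ (e.g.\ if $2\negx=\negh$ or if $\negh-\negx\in S^*$); but the argument already collapses before you reach that point.

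The paper's ``successive applications'' never modify $S$. The observation driving the proof of Proposition~\ref{prop c especial}, via Proposition~\ref{lemma H(S)i}, is that every $\negh\in\operatorname{EH}(S)\setminus\operatorname{CEH}(S)$ is the \emph{unique} element of $\operatorname{H}(S)^{(i)}$ for some $i\in\{1,\dots,d\}$, so $|\operatorname{EH}(S)\setminus\operatorname{CEH}(S)|\leq d$ and hence $|\operatorname{CEH}(S)|\geq|\operatorname{EH}(S)|-d\geq k$. Equivalently, the pigeonhole argument there shows that any $(d+1)$-element subset of $\operatorname{EH}(S)$ meets $\operatorname{CEH}(S)$; applying this to $\operatorname{EH}(S)$, then to $\operatorname{EH}(S)\setminus\{\negh_1\}$, and so on, peels off $k$ distinct elements of $\operatorname{CEH}(S)$ without ever leaving the fixed semigroup $S$.
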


In the remainder of this section, we discuss the relation between irreducibility and the set of corner special gaps. Notice that in the context of numerical semigroups ($d=1$), as $\operatorname{CEH}(S)=\operatorname{EH}(S)\setminus \{\operatorname{F}(S)\}$, Proposition~\ref{especial irredutivel} establishes that $S$ is irreducible if and only if $\operatorname{CEH}(S)=\emptyset$.

\begin{proposition}
	Let $S\subseteq \mathbb{N}^d$ be a GNS with corner element $\negc$. If $S$ is a Frobenius GNS with $\operatorname{CEH}(S)=\emptyset$, then $S$ is irreducible. 
\end{proposition}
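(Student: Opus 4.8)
The plan is to reduce the claim to the characterization of irreducibility in terms of special gaps, namely the equivalence $(1)\Leftrightarrow(2)$ of Proposition~\ref{especial irredutivel}, which says that $S$ is irreducible precisely when $|\operatorname{EH}(S)|=1$. Since $S$ has positive genus, Lemma~\ref{maximals especial gap} guarantees that $\operatorname{EH}(S)$ is nonempty. Thus it suffices to show that $\operatorname{EH}(S)$ cannot contain two or more elements, and I would argue this by contradiction: assume $|\operatorname{EH}(S)|\geq 2$ and derive that $\operatorname{CEH}(S)\neq\emptyset$, contradicting the hypothesis.

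The crucial structural input is that $S$ is a Frobenius GNS. First I would record that the Frobenius element $\operatorname{F}(S)$ is not merely a maximal element of $\operatorname{H}(S)$ but in fact its greatest element with respect to $\leq$; in a finite poset a unique maximal element dominates every other element, so every gap $\negg\in\operatorname{H}(S)$ satisfies $\negg\leq\operatorname{F}(S)$. In particular $\operatorname{F}(S)\in\operatorname{H}(S)$, and recall from the discussion following Proposition~\ref{lub} that $\operatorname{F}(S)=\negc-\mathbf{1}$. Now, under the assumption $|\operatorname{EH}(S)|\geq 2$, I can pick some $\negh\in\operatorname{EH}(S)$ with $\negh\neq\operatorname{F}(S)$. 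Since $\negh$ is a gap, $\negh\leq\operatorname{F}(S)$, and as $\negh\neq\operatorname{F}(S)$ this gives the strict inequality $\negh<\operatorname{F}(S)$.

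At this point the conclusion follows from Proposition~\ref{c-especial dois elementos}: taking $\negg=\operatorname{F}(S)\in\operatorname{H}(S)$, the hypothesis $\negh\in\operatorname{EH}(S)$ with $\negh<\negg$ yields $\negh\in\operatorname{CEH}(S)$, contradicting $\operatorname{CEH}(S)=\emptyset$. Hence $\operatorname{EH}(S)=\{\operatorname{F}(S)\}$, so $|\operatorname{EH}(S)|=1$ and $S$ is irreducible by Proposition~\ref{especial irredutivel}. I do not expect a serious obstacle here; the only point requiring a word of care is the passage from ``unique maximal gap'' to ``greatest gap'', which is exactly what supplies the dominating gap $\negg=\operatorname{F}(S)$ needed to invoke Proposition~\ref{c-especial dois elementos}. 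Everything else is a direct application of results already established in the excerpt.
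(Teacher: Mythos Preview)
Your proposal is correct and follows essentially the same argument as the paper: both assume $|\operatorname{EH}(S)|\geq 2$ (equivalently, $S$ non-irreducible), use that the Frobenius element $\operatorname{F}(S)=\negc-\mathbf{1}$ lies in $\operatorname{EH}(S)$, pick a second special gap $\negh<\operatorname{F}(S)$, and invoke Proposition~\ref{c-especial dois elementos} to obtain $\negh\in\operatorname{CEH}(S)$, a contradiction. Your version is slightly more explicit about why the unique maximal gap dominates every other gap, but otherwise the reasoning is identical.
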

\begin{proof}
	Suppose that $S$ is a Frobenius GNS which is not irreducible. It follows from Proposition~\ref{especial irredutivel} that $|\operatorname{EH}(S)|\geq 2$. As $\operatorname{F}(S)=\negc-\mathbf{1}\in \operatorname{EH}(S)$ by Lemma~\ref{maximals especial gap}, there exists $\negh \in \operatorname{EH}(S)$ with $\negh<\operatorname{F}(S)$. Hence, we obtain from Proposition~\ref{c-especial dois elementos} that $\negh\in \operatorname{CEH}(S)$, contradicting $\operatorname{CEH}(S)=\emptyset$. Therefore, $S$ is irreducible.
\end{proof}

Differently from the numerical semigroup case, the converse of the previous result does not hold in general as we can see in the following example.

\begin{example}
Let $S=\mathbb{N}^2\setminus\{(1,0), (1,1), (1,2), (2,0), (2,1), (4,0), (4,1), (4,2)\}$.
Observe that $S$ has corner element $(5, 3)$ and is irreducible because $\operatorname{EH}(S)=\{(4,2)\}$. Moreover, the Frobenius element $(4, 2)$ of $S$ is a corner special gap of $S$ since $\operatorname{H}(S)^{(1)}=\{(4, 0), (4, 1), (4, 2)\}$ and $\operatorname{H}(S)^{(2)}=\{(1, 2), (4, 2)\}$.
\end{example}
	
	\section{Atomic generalized numerical semigroups}

  In this section, we introduce the notion of atoms of the set of GNSs with fixed corner element. This extends the concept of atomic numerical semigroups of \cite{Rosales2} to the setting of GNSs. We explore the outcomes of this notion and its differences for the numerical semigroup case.
  
	For $\mathbf{c} \in  \mathbb{N}_0^d$, let us denote
	$$
	\mathcal{F}(\mathbf{c}) := \{ S \subseteq  \mathbb{N}_0^d \ : \ S \mbox{ is a GNS with corner element } \mathbf{c}  \}.
	$$
	
    It can be easily checked that $\mathcal{F}(\mathbf{c})$ is closed with respect to the intersection of sets. Hence, an element $S$ of $\mathcal{F}(\mathbf{c})$ is called an \textit{atom} if it cannot be written as an intersection of two GNSs in $\mathcal{F}(\negc)$ properly containing $S$. A GNS $S$ with corner element $\negc$ is called \textit{atomic} if it is an atom of $\mathcal{F}(\negc)$. 
    
    It follows from the definition of atom that every GNS in $\mathcal{F}(\mathbf{c})$ can be expressed as an intersection of finitely many atomic GNSs in $\mathcal{F}(\mathbf{c})$. This is because the set of GNSs that contain properly $S$ is finite, since $\mathbb{N}_0^d\setminus S$ has finitely many elements. We point out that every irreducible GNS is atomic, but the converse does not hold true, as we will see throughout this section. In particular, the notion of atomic GNS extends the concept of irreducible GNS, in somehow. 

We now study the Frobenius GNSs that are atoms. If it is irreducible, then $|\operatorname{EH}(S)| = 1$ (cf. Proposition \ref{especial irredutivel}). The next result deals with the atomic and non-irreducible Frobenius GNSs.


	\begin{proposition} \label{especial ANI Frobenius}
		Let $S \subseteq \mathbb{N}_0^d$ be an atomic and non-irreducible Frobenius GNS. Then $|\operatorname{EH}(S)|=2$.
	\end{proposition}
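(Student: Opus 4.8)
The plan is to leverage the characterisation of non-irreducibility in Proposition~\ref{especial irredutivel} together with the notion of corner special gap. Since $S$ is non-irreducible, Proposition~\ref{especial irredutivel} gives $|\operatorname{EH}(S)|\geq 2$, so it suffices to rule out $|\operatorname{EH}(S)|\geq 3$. The strategy is to show that if $|\operatorname{EH}(S)|\geq 3$, then $|\operatorname{CEH}(S)|\geq 2$, and that two distinct corner special gaps produce a decomposition of $S$ inside $\mathcal{F}(\mathbf{c})$, contradicting the hypothesis that $S$ is an atom.

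First I would record that, because $S$ is Frobenius, Proposition~\ref{lub} yields $\operatorname{F}(S)=\mathbf{c}-\mathbf{1}=\operatorname{lub}(\operatorname{H}(S))$, so every gap satisfies $\mathbf{h}\leq \operatorname{F}(S)$; moreover $\operatorname{F}(S)\in \operatorname{MH}(S)^{(i)}$ for each $i$, whence $\operatorname{F}(S)\in \operatorname{EH}(S)$ by Lemma~\ref{maximals especial gap}. Consequently every element of $\operatorname{EH}(S)\setminus\{\operatorname{F}(S)\}$ lies strictly below $\operatorname{F}(S)\in\operatorname{H}(S)$, and Proposition~\ref{c-especial dois elementos} then forces each such element to belong to $\operatorname{CEH}(S)$. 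This gives the inclusion $\operatorname{EH}(S)\setminus\{\operatorname{F}(S)\}\subseteq \operatorname{CEH}(S)$, so $|\operatorname{EH}(S)|\geq 3$ would imply $|\operatorname{CEH}(S)|\geq 2$.

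To close the argument I would pick two distinct $\mathbf{h}_1,\mathbf{h}_2\in \operatorname{CEH}(S)$ and set $S_j=S\cup\{\mathbf{h}_j\}$ for $j=1,2$. By Proposition~\ref{GNS special} each $S_j$ is a GNS, and by Definition~\ref{c special gaps} each has corner element $\mathbf{c}$, so $S_1,S_2\in\mathcal{F}(\mathbf{c})$ and both properly contain $S$. Since $\mathbf{h}_1\neq \mathbf{h}_2$ are gaps of $S$, one checks immediately that $S_1\cap S_2=S$, exhibiting $S$ as an intersection of two elements of $\mathcal{F}(\mathbf{c})$ properly containing it and contradicting the atomicity of $S$. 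Hence $|\operatorname{EH}(S)|=2$. I expect the only delicate point to be the reduction step: verifying that every special gap other than $\operatorname{F}(S)$ sits strictly below $\operatorname{F}(S)$ and therefore qualifies as a corner special gap via Proposition~\ref{c-especial dois elementos}; once this is in place, the final decomposition is routine.
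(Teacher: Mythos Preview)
Your proposal is correct and follows essentially the same route as the paper: use Proposition~\ref{especial irredutivel} to get $|\operatorname{EH}(S)|\geq 2$, then show that any special gap other than $\operatorname{F}(S)$ lies strictly below $\operatorname{F}(S)$ and hence is a corner special gap by Proposition~\ref{c-especial dois elementos}, so $|\operatorname{EH}(S)|\geq 3$ would yield two corner special gaps and a decomposition $S=(S\cup\{\mathbf{h}_1\})\cap(S\cup\{\mathbf{h}_2\})$ inside $\mathcal{F}(\mathbf{c})$, contradicting atomicity. Your write-up is in fact slightly more careful than the paper's, since you explicitly justify $\operatorname{F}(S)\in\operatorname{EH}(S)$ and the strict inequality $\mathbf{h}<\operatorname{F}(S)$ for the remaining special gaps.
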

	\begin{proof}
		Let $\operatorname{F}(S)$ be the Frobenius element of $S$. It follows from Proposition \ref{especial irredutivel} that $|\operatorname{EH}(S)| \geq 2$. Suppose that $|\operatorname{EH}(S)| > 2$. So, there exist $\mathbf{h}_1, \mathbf{h}_2 \in \operatorname{EH}(S) \setminus \{\operatorname{F}(S)\}$ such that $\negh_1 < \operatorname{F}(S)$ and $\negh_2 < \operatorname{F}(S)$. It follows from Lemma \ref{c-especial dois elementos} that $\negh_1$ and $\negh_2$ are corner special gaps of $S$. Since $S = (S \cap \{\negh_1\}) \cup (S \cap \{\negh_2\})$, it contradicts $S$ to be atomic.
	\end{proof}


We recall that for the case $d = 1$ we have the following: $S$ is an atomic and non-irreducible numerical semigroup if and only if $|\operatorname{EH}(S)| = 2$ (cf. \cite{Rosales2}). However, in the general case, the converse of Proposition \ref{especial ANI Frobenius} is not true, even for Frobenius GNSs. 

\begin{example}\label{exemplo nao atomo}
    Let $$S=\mathbb{N}_0^2\setminus\{(1,0), (1,1), (2,0), (2,1), (2,2), (2,3), (3,0), (3,1), (3,2), (3,3) \}$$
    be the Frobenius GNS with corner element $(4,4)$. In this case, we have $\operatorname{EH}(S) =\operatorname{CEH}(S) = \{(2,3), (3,3)\}$ and therefore we can write $S = (S \cup \{(2,3)\}) \cap (S \cup \{(3,3)\})$. Hence, $S$ is not an atom.
\end{example}


The next result relates the concept of atoms with the the set of special gaps of a GNS by a necessary condition. 

\begin{proposition} \label{cota superior especial}
Let $S \subseteq \mathbb{N}_0
^d$ be an atomic GNS. Then $$|\operatorname{EH}(S)| \leq d+1.$$
\end{proposition}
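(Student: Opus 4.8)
The plan is to prove the contrapositive: I would assume that $|\operatorname{EH}(S)| \geq d+2$ and show that $S$ is not atomic, i.e.\ that $S$ can be written as a proper intersection of two GNSs in $\mathcal{F}(\negc)$. The whole strategy rests on the results of Section~3, which tell us that $\negc$-special gaps give rise to exactly the extensions within $\mathcal{F}(\negc)$ that we need, together with Corollary~\ref{CEHk}, which converts a lower bound on $|\operatorname{EH}(S)|$ into a lower bound on $|\operatorname{CEH}(S)|$.

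First I would invoke Corollary~\ref{CEHk}: applying it with $k=2$ to the hypothesis $|\operatorname{EH}(S)| \geq d+2$ immediately yields $|\operatorname{CEH}(S)| \geq 2$. So I can pick two distinct corner special gaps $\negh_1, \negh_2 \in \operatorname{CEH}(S)$. By Definition~\ref{c special gaps}, each unitary extension $T_j := S \cup \{\negh_j\}$ is a GNS that still has corner element $\negc$, so both $T_1$ and $T_2$ belong to $\mathcal{F}(\negc)$. Since $\negh_j \notin S$ but $\negh_j \in T_j$, each $T_j$ properly contains $S$. The final step is to check that $S = T_1 \cap T_2$. The inclusion $S \subseteq T_1 \cap T_2$ is clear. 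For the reverse inclusion, one notes that $\operatorname{H}(T_1 \cap T_2) = \operatorname{H}(T_1) \cup \operatorname{H}(T_2) = (\operatorname{H}(S)\setminus\{\negh_1\}) \cup (\operatorname{H}(S)\setminus\{\negh_2\})$; because $\negh_1 \neq \negh_2$, this union is all of $\operatorname{H}(S)$, forcing $T_1 \cap T_2 = S$. This exhibits $S$ as a proper intersection of two elements of $\mathcal{F}(\negc)$, contradicting atomicity and completing the contrapositive.

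The argument is essentially a counting/pigeonhole consequence packaged in Corollary~\ref{CEHk}, so I do not expect a serious obstacle; the only point requiring genuine care is verifying that two \emph{distinct} corner special gaps actually produce two \emph{distinct} proper extensions whose intersection collapses back to $S$. This is exactly where $|\operatorname{CEH}(S)| \geq 2$ (rather than just $\geq 1$) is used, and where the complement computation $\operatorname{H}(T_1)\cup\operatorname{H}(T_2) = \operatorname{H}(S)$ must be done honestly. It is worth emphasizing that the bound $d+1$ is sharp in the sense that the implication would fail if one only had $|\operatorname{CEH}(S)|\geq 1$, since a single corner special gap does not let us build a nontrivial decomposition within $\mathcal{F}(\negc)$; this is consistent with the fact (observed just before Proposition~\ref{especial ANI Frobenius}) that irreducible GNSs are atomic.
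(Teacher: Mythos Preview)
Your argument is correct and follows essentially the same route as the paper's proof: assume $|\operatorname{EH}(S)|\geq d+2$, apply Corollary~\ref{CEHk} with $k=2$ to obtain two distinct corner special gaps, and write $S=(S\cup\{\negh_1\})\cap(S\cup\{\negh_2\})$ inside $\mathcal{F}(\negc)$. The only difference is that you spell out the verification of $S=T_1\cap T_2$ via the complements, which the paper leaves implicit.
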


\begin{proof}
Suppose that $|\operatorname{EH}(S)| \geq d + 2$. By Corollary \ref{CEHk}, it follows that $|\operatorname{CEH}(S)| \geq 2$. Thus, there exist $\mathbf{h}_1, \mathbf{h}_2 \in \operatorname{CEH}(S)$ and we can write $S = (S \cup \{\negh_1\}) \cap (S \cup \{\negh_2\})$ and we conclude that $S$ is not an atom, which is a contradiction.
\end{proof}

Differently from the numerical semigroup case, the converse does not hold true, as we can in Example~\ref{exemplo nao atomo}. Despite the structure of special gaps does not characterize entirely the property of atomicity in GNSs, the subset of corner special gaps does. The characterization we provide for atomic GNSs in the sequence generalizes that given in Lemma 2 of \cite{Rosales2} for numerical semigroups.
	
	\begin{theorem} \label{prop atomo c especial}
		Let $S\subseteq \mathbb{N}_0^d$ be a GNS with corner element $\negc$. Then $S$ is an atom if and only if $|\operatorname{CEH}(S)| \leq 1$.
	\end{theorem}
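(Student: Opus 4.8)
The plan is to prove both implications by contraposition, exploiting the fact that an element $\mathbf{h} \in \operatorname{CEH}(S)$ is precisely one whose unitary extension $S \cup \{\mathbf{h}\}$ is again a member of $\mathcal{F}(\mathbf{c})$ properly containing $S$; this is exactly the kind of factor one needs to exhibit $S$ as a nontrivial intersection inside $\mathcal{F}(\mathbf{c})$.

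For the implication ``$S$ atom $\Rightarrow |\operatorname{CEH}(S)| \leq 1$'', I would repeat the argument already used for Proposition~\ref{cota superior especial}. Assuming $|\operatorname{CEH}(S)| \geq 2$, pick distinct $\mathbf{h}_1, \mathbf{h}_2 \in \operatorname{CEH}(S)$. By Proposition~\ref{GNS special} and the definition of $\operatorname{CEH}$, both $S \cup \{\mathbf{h}_1\}$ and $S \cup \{\mathbf{h}_2\}$ are GNSs in $\mathcal{F}(\mathbf{c})$ properly containing $S$, and since $\mathbf{h}_1 \neq \mathbf{h}_2$ we have $(S \cup \{\mathbf{h}_1\}) \cap (S \cup \{\mathbf{h}_2\}) = S$. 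Hence $S$ is not an atom.

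For the converse ``$|\operatorname{CEH}(S)| \leq 1 \Rightarrow S$ atom'', I would assume $S$ is not an atom and produce two distinct corner special gaps. Write $S = T_1 \cap T_2$ with $T_1, T_2 \in \mathcal{F}(\mathbf{c})$ properly containing $S$. For each $j \in \{1,2\}$, the set $T_j \setminus S$ is nonempty and finite, so it has a maximal element $\mathbf{h}_j$ with respect to the natural partial order; by Lemma~\ref{lemma maximals}, $\mathbf{h}_j \in \operatorname{EH}(S)$. The GNS $S \cup \{\mathbf{h}_j\}$ is then sandwiched as $S \subseteq S \cup \{\mathbf{h}_j\} \subseteq T_j$, and since both $S$ and $T_j$ have corner element $\mathbf{c}$, Corollary~\ref{lemma corner subset} forces $S \cup \{\mathbf{h}_j\}$ to have corner element $\mathbf{c}$ as well, so $\mathbf{h}_j \in \operatorname{CEH}(S)$.

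It remains to verify $\mathbf{h}_1 \neq \mathbf{h}_2$: if they were equal, the common value would lie in $T_1 \cap T_2 = S$, contradicting $\mathbf{h}_j \notin S$. Thus $|\operatorname{CEH}(S)| \geq 2$, completing the contraposition. The argument is short, and the only step requiring care is the upgrade from ``$\mathbf{h}_j$ is a special gap'' to ``$\mathbf{h}_j$ is a \emph{corner} special gap'': this is where choosing $\mathbf{h}_j$ maximal in $T_j \setminus S$ (so that Lemma~\ref{lemma maximals} applies) combines with the hypothesis that $T_j$ retains the corner element $\mathbf{c}$ (so that Corollary~\ref{lemma corner subset} applies), and it is the essential point where the constraint of working inside $\mathcal{F}(\mathbf{c})$ rather than among all GNSs is used.
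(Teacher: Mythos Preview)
Your proof is correct and follows essentially the same approach as the paper's: both directions are argued by contraposition using unitary extensions by corner special gaps, with Lemma~\ref{lemma maximals} and Corollary~\ref{lemma corner subset} supplying the key steps in the converse. Your explicit verification that $\mathbf{h}_1 \neq \mathbf{h}_2$ (via $\mathbf{h}_j \in T_j \setminus S$ and $T_1 \cap T_2 = S$) is a detail the paper leaves implicit, so your write-up is in fact slightly more complete.
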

	\begin{proof}
		First, let $S$ be an atom and suppose that  $|\operatorname{CEH}(S)| \geq 2$. Thus, there exist $\mathbf{h}_1, \mathbf{h}_2 \in \operatorname{CEH}(S)$, with $\mathbf{h}_1 \neq \mathbf{h}_2$. By Proposition \ref{GNS special}, we have that $S_1 = S \cup \{\mathbf{h}_1  \}$ and $S_2 = S \cup \{ \mathbf{h}_2\}$ are GNSs, and from the definition of  corner special gap, it follows that both $S_1$ and $S_2$ have corner element $\mathbf{c}$. So, since $S = S_1 \cap S_2$, we have a contradiction. Therefore, $|\operatorname{CEH}(S)| \leq 1$.
		
		Conversely, suppose that $|\operatorname{CEH}(S)| \leq 1$. If $S$ is not an atom, then there exist $S_1$ and $S_2$ GNSs with corner element $\negc$ such that $S = S_1 \cap S_2$, with $S \neq S_1$ and $S \neq S_2$. From Lemma \ref{lemma maximals}, it follows that there exist $\mathbf{h}_1 \in  Maximals(S_1 \setminus S) \cap \operatorname{EH}(S)$ and $\mathbf{h}_2 \in  Maximals(S_2 \setminus S) \cap \operatorname{EH}(S)$. Now, by Corollary~\ref{lemma corner subset} and Proposition \ref{GNS special}, we have that $S \cup \{\mathbf{h}_1\} \subseteq S_1$ and $S \cup \{\mathbf{h}_2\} \subseteq S_2$ are both GNSs with corner element $\mathbf{c}$, and so we get  $ \mathbf{h}_1,  \mathbf{h}_2 \in \operatorname{CEH}(S)$, a contradiction. And the result follows.
	\end{proof}
	


\begin{example}
Let $S=\mathbb{N}_0^2\setminus\{(0,1),(1,0),(1,1),(2,0),(2,1),(2,2),(3,1)\}$. In this case, $\operatorname{EH}(S)=\{(2,1),(2,2),(3,1)\}$. Note that $S$ is an atom because $\operatorname{CEH}(S)=\{(2,1)\}$, and so the bound on $|\operatorname{EH}(S)|$ obtained in Proposition \ref{cota superior especial} is sharp.
\end{example}






Atomic numerical semigroups can be characterized in terms of a maximal property in the set of numerical semigroups having two fixed gaps (cf. Theorem 4 of \cite{Rosales2}). In what follows, we bring this discussion to the setting of GNSs, involving it with the corner element. To this end, given positive integers $h<c$, we consider $\mathcal{F}(c; h)$ the set of numerical semigroups $S$ with fixed conductor $c$ and $S\cap \{h\}=\emptyset$, and $\mathcal{M} \mathcal{F}(c; h)$ the set of its maximal elements with respect to the inclusion. Using the same notation of \cite{Rosales2}, $L(g_1, g_2)$ means the set of numerical semigroups $S$ such $S\cap \{g_1,g_2\}=\emptyset$ and $\mathcal{M}L(g_1, g_2)$ its maximal elements, we can state the following relation.


\begin{lemma} \label{prop motivacao}
	Let $g_1$ and $g_2$ be positive integers such that $g_1<g_2$. Then $$\mathcal{M} L(g_1, g_2) = \mathcal{M} \mathcal{F}(g_2+1; g_1).$$
\end{lemma}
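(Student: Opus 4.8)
The plan is to prove the equality of the two sets of maximal numerical semigroups by a direct double-inclusion argument, exploiting the fact that fixing the conductor $c = g_2 + 1$ is equivalent to requiring $g_2$ to be a gap that is maximal among all gaps. First I would unwind the two definitions. An element of $\mathcal{M}L(g_1, g_2)$ is a numerical semigroup $S$ maximal (with respect to inclusion) among those missing both $g_1$ and $g_2$; an element of $\mathcal{M}\mathcal{F}(g_2+1; g_1)$ is a numerical semigroup maximal among those with conductor exactly $g_2+1$ and missing $g_1$. The key observation is that for a numerical semigroup $S$, having conductor equal to $g_2 + 1$ is the same as saying $g_2$ is the Frobenius number of $S$, i.e. $g_2 \notin S$ while every integer larger than $g_2$ lies in $S$.

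The first step is to show $\mathcal{M}\mathcal{F}(g_2+1; g_1) \subseteq \mathcal{M}L(g_1, g_2)$. Take $S \in \mathcal{M}\mathcal{F}(g_2+1; g_1)$. Then $S$ misses $g_1$ and $g_2$, so $S \in L(g_1, g_2)$. To see $S$ is maximal in $L(g_1, g_2)$, suppose $S \subsetneq T$ with $T \in L(g_1, g_2)$. Since $g_2 \notin T$ and $S \subseteq T$ forces $\operatorname{F}(T) \geq \operatorname{F}(S) = g_2$, while $g_2 \notin T$ shows $\operatorname{F}(T) \geq g_2$; but I must rule out $\operatorname{F}(T) > g_2$. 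Here is the crucial point: because $S$ has conductor $g_2+1$, every integer $> g_2$ belongs to $S$, hence to $T$, so the only gap of $T$ beyond $g_2$ would have to be $g_2$ itself — forcing $\operatorname{F}(T) = g_2$ and thus $T$ has conductor $g_2+1$. Then $T \in \mathcal{F}(g_2+1; g_1)$ properly contains $S$, contradicting maximality of $S$ there. Hence $S$ is maximal in $L(g_1, g_2)$.

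The second step is the reverse inclusion $\mathcal{M}L(g_1, g_2) \subseteq \mathcal{M}\mathcal{F}(g_2+1; g_1)$. Take $S \in \mathcal{M}L(g_1, g_2)$. I first claim $\operatorname{F}(S) = g_2$, equivalently that $S$ has conductor $g_2+1$. Indeed $g_2 \notin S$ gives $\operatorname{F}(S) \geq g_2$; if $\operatorname{F}(S) > g_2$ there would be some gap $m > g_2$ with $m \neq g_1$, and then I would argue that $S \cup \{m\}$ (more precisely, a suitable numerical semigroup obtained by filling in the top gap, using that the largest gap is always a special gap whose adjunction yields a numerical semigroup) still misses $g_1$ and $g_2$ and strictly contains $S$, contradicting maximality in $L(g_1,g_2)$. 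This places $S \in \mathcal{F}(g_2+1; g_1)$. Maximality of $S$ in the smaller family $\mathcal{F}(g_2+1;g_1)$ then follows because $\mathcal{F}(g_2+1; g_1) \subseteq L(g_1, g_2)$, so any proper containment inside the smaller family is also one inside the larger, which is impossible by maximality of $S$ in $L(g_1,g_2)$.

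I expect the main obstacle to be the second step, specifically justifying that when $\operatorname{F}(S) > g_2$ one can strictly enlarge $S$ while keeping both $g_1$ and $g_2$ as gaps. The safe enlargement is to adjoin the Frobenius number $\operatorname{F}(S) = m$, which is always a special gap, so $S \cup \{m\}$ is again a numerical semigroup (the one-dimensional instance of Proposition~\ref{GNS special}); since $m > g_2 > g_1$, neither $g_1$ nor $g_2$ is touched, giving the desired contradiction. The remaining verifications — that conductor $g_2+1$ forces all integers above $g_2$ into $S$, and that $\mathcal{F}(g_2+1;g_1) \subseteq L(g_1,g_2)$ — are routine once the equivalence ``conductor $g_2+1$ $\Leftrightarrow$ Frobenius number $g_2$'' is in place.
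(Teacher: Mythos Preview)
Your proof is correct and follows essentially the same double-inclusion strategy as the paper. The only noteworthy difference is in how you pin down the conductor of an $S \in \mathcal{M}L(g_1,g_2)$: the paper invokes \cite[Lemma~3]{Rosales2} to get $\operatorname{EH}(S)\subseteq\{g_1,g_2\}$ and then observes that the Frobenius number is always a special gap, whereas you argue directly that adjoining $\operatorname{F}(S)$ would contradict maximality if $\operatorname{F}(S)>g_2$. These are two phrasings of the same idea, yours being slightly more self-contained. One small slip in your write-up: the clause ``$S\subseteq T$ forces $\operatorname{F}(T)\geq \operatorname{F}(S)$'' has the inequality reversed (inclusion of semigroups gives $\operatorname{F}(T)\leq \operatorname{F}(S)$), but this is harmless since the argument you actually use---that every integer above $g_2$ lies in $S\subseteq T$, hence $\operatorname{F}(T)=g_2$---is correct.
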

\begin{proof}
	Let $S \in \mathcal{M} L(g_1 , g_2)$. By \cite[Lemma 3]{Rosales2}, we have that $\operatorname{EH}(S) \subseteq \{g_1, g_2\}$. Since the Frobenius number is a special gap of $S$ and $g_1< g_2$, we conclude that the conductor of $S$ is $g_2+1$. Now we prove that $S$ is maximal in the set $\mathcal{F}(g_2+1; g_1)$. If $T \in \mathcal{F}(g_2+1; g_1)$ is such that $S \subseteq T$, then $g_1, g_2 \notin T$, and from the maximality of $S$ in $L(g_1, g_2)$, we conclude that $S = T$. \\
	On the other hand, if $S \in \mathcal{M} \mathcal{F} (g_2+1; g_1)$, then $g_1, g_2 \notin S$ and $S \in L(g_1, g_2)$. Now we prove that $S$ is maximal in the set $L(g_1, g_2)$. If there exists $T \in \mathcal{M} L(g_1, g_2)$ such that $S \subseteq T$, then $g_1, g_2 \notin T$. In particular, the conductor of $T$ is $g_2+1$ and $T \in \mathcal{M} \mathcal{F} (g_2+1; g_1)$. From the maximality of $S$ in $\mathcal{F}(g_2+1; g_1)$, we conclude that $S = T$.
\end{proof}

Lemma~\ref{prop motivacao} motivates us to introduce the following: for $\negc, \negh \in \mathbb{N}_0^d$ with $\negh\leq \negc-\mathbf{1}$, define $$\mathcal{F}(\negc; \negh) := \{S\in \mathcal{F}(\negc) \ : \ \negh \in \operatorname{H}(S)\}.$$ Denote by $\mathcal{M} \mathcal{F}(\negc; \negh)$ the set of maximal elements (with respect to the inclusion) of $\mathcal{F}(\negc; \negh)$. Hence, we have the following property.

\begin{proposition} \label{CEHunit}
	Let $S\subseteq \mathbb{N}_0^d$ be a GNS with corner element $\negc$ and $\negh\in \operatorname{H}(S)$. Then $\operatorname{CEH}(S) \subseteq \{\negh\}$ if and only if $S\in \mathcal{M} \mathcal{F}(\negc; \negh)$.
\end{proposition}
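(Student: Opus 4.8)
The plan is to establish the two implications separately, each by contradiction, leaning on Lemma~\ref{lemma maximals} (maximal elements of a proper difference are special gaps) together with Corollary~\ref{lemma corner subset} (a GNS sandwiched between two GNSs sharing a corner inherits that corner). Observe first that the standing hypotheses---namely that $S$ has corner element $\negc$ and $\negh\in\operatorname{H}(S)$---already place $S$ in $\mathcal{F}(\negc;\negh)$, so it makes sense to ask whether $S$ is maximal there. The entire argument reduces to the interplay between corner special gaps and unitary extensions, via Proposition~\ref{GNS special}.

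For the implication $\operatorname{CEH}(S)\subseteq\{\negh\}\Rightarrow S\in\mathcal{M}\mathcal{F}(\negc;\negh)$, I would assume $S$ is not maximal and choose $T\in\mathcal{F}(\negc;\negh)$ with $S\subsetneq T$. Picking $\negz\in Maximals(T\setminus S)$, Lemma~\ref{lemma maximals} gives $\negz\in\operatorname{EH}(S)$, so that $S\cup\{\negz\}$ is a GNS by Proposition~\ref{GNS special}. Since $S\subseteq S\cup\{\negz\}\subseteq T$ and both $S$ and $T$ have corner element $\negc$, Corollary~\ref{lemma corner subset} forces $S\cup\{\negz\}$ to have corner element $\negc$ as well; hence $\negz\in\operatorname{CEH}(S)$. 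By hypothesis this forces $\negz=\negh$. But $\negz\in T\setminus S$ means $\negz\in T$, whereas $\negh\in\operatorname{H}(T)$ because $T\in\mathcal{F}(\negc;\negh)$, a contradiction. Therefore $S$ is maximal.

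For the converse, assuming $S\in\mathcal{M}\mathcal{F}(\negc;\negh)$, I would suppose toward a contradiction that there is some $\negz\in\operatorname{CEH}(S)$ with $\negz\neq\negh$. By the definition of corner special gap, $S\cup\{\negz\}$ is then a GNS with corner element $\negc$, and since $\negz\neq\negh$ the element $\negh$ is still a gap of $S\cup\{\negz\}$; thus $S\cup\{\negz\}\in\mathcal{F}(\negc;\negh)$ strictly contains $S$, contradicting maximality. The one step carrying the real content is the forward direction: the sandwich $S\subseteq S\cup\{\negz\}\subseteq T$ is precisely what allows Corollary~\ref{lemma corner subset} to certify that the new generator $\negz$ preserves the corner, upgrading the mere special gap produced by Lemma~\ref{lemma maximals} into a corner special gap. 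The only point genuinely requiring attention is checking that $\negh$ remains a gap throughout, so that all the extensions considered stay inside $\mathcal{F}(\negc;\negh)$; everything else is routine bookkeeping.
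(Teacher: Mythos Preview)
Your proof is correct and follows essentially the same approach as the paper's: both directions are argued by contradiction using Lemma~\ref{lemma maximals} and Corollary~\ref{lemma corner subset} for the forward implication, and the obvious unitary extension $S\cup\{\negz\}$ for the converse. Your write-up is in fact slightly more explicit than the paper's in spelling out why the element $\negz\in\operatorname{CEH}(S)$ produced in the forward direction must differ from $\negh$ (namely because $\negz\in T$ while $\negh\notin T$), which is the point the paper leaves implicit.
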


\begin{proof}
	$(\Rightarrow)$ Suppose that there exists $T\in \mathcal{F}(\negc)$ such that $S\subsetneq T$ and $\negh\in \operatorname{H}(T)$. By Lemma \ref{lemma maximals}, for $\negx\in Maximals(T\setminus S)$ we have that $\negx\in \operatorname{EH}(S)$. So, as $S\subseteq S\cup \{\negx\}\subseteq T$ and $S, T\in \mathcal{F}(\negc)$, we have $S\cup \{\negx\}\in \mathcal{F}(\negc)$ by Corollary~\ref{lemma corner subset}. It implies that $\negx \in \operatorname{CEH}(S)$, contradicting the hypotheses that $\operatorname{CEH}(S) \subseteq \{\negh\}$. \\
	$(\Leftarrow)$ Suppose that there exists $\negx \in \operatorname{CEH}(S)$ such that $\negx \neq \negh$. Then, $S \subseteq S \cup \{\negx\}$ are GNSs with corner element $\negc$ and have $\negh$ as a gap. Therefore, we obtain that $S \notin \mathcal{M} \mathcal{F}(\negc; \negh)$.
\end{proof}

\begin{remark} \label{CEHvazio}
	Observe that, in the conditions of the previous result, $\operatorname{CEH}(S)=\emptyset$ is equivalent to $S$ being maximal in $\mathcal{F}(\negc)$ with respect to the inclusion.
\end{remark}


In our next result, we state a characterization for atomic GNSs in terms of a maximal property among GNSs with fixed corner. It extends what is known for numerical semigroups in Theorem 4 of \cite{Rosales2}. 

\begin{theorem} \label{atom MF}
	Let $S\subseteq\mathbb{N}_0^d$ be a GNS with corner element $\negc$ and positive genus. Then $S$ is an atom if and only if $S \in \mathcal{M} \mathcal{F}(\negc; \negh)$ for some $\negh\in \mathbb{N}_0^d$.
\end{theorem}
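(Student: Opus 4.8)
The plan is to prove both directions by leveraging the characterization of atoms via corner special gaps established in Theorem~\ref{prop atomo c especial}, together with the equivalence between the condition $\operatorname{CEH}(S)\subseteq\{\negh\}$ and membership in $\mathcal{M}\mathcal{F}(\negc;\negh)$ provided by Proposition~\ref{CEHunit}. The key observation is that $S$ is an atom precisely when $|\operatorname{CEH}(S)|\leq 1$, and this cardinality condition naturally splits into the two cases $\operatorname{CEH}(S)=\emptyset$ and $|\operatorname{CEH}(S)|=1$, which I would handle separately.

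For the forward direction, suppose $S$ is an atom. By Theorem~\ref{prop atomo c especial} we have $|\operatorname{CEH}(S)|\leq 1$. If $\operatorname{CEH}(S)=\{\negh\}$ for a single element $\negh$, then in particular $\operatorname{CEH}(S)\subseteq\{\negh\}$ with $\negh\in\operatorname{H}(S)$, so Proposition~\ref{CEHunit} immediately yields $S\in\mathcal{M}\mathcal{F}(\negc;\negh)$. If instead $\operatorname{CEH}(S)=\emptyset$, I must still exhibit some $\negh\in\operatorname{H}(S)$ with $S\in\mathcal{M}\mathcal{F}(\negc;\negh)$. Here I would use that $S$ has positive genus, so $\operatorname{H}(S)\neq\emptyset$; picking any gap $\negh\in\operatorname{H}(S)$, we trivially have $\operatorname{CEH}(S)=\emptyset\subseteq\{\negh\}$, and Proposition~\ref{CEHunit} again gives $S\in\mathcal{M}\mathcal{F}(\negc;\negh)$.

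For the converse, suppose $S\in\mathcal{M}\mathcal{F}(\negc;\negh)$ for some $\negh$. Since $S\in\mathcal{F}(\negc;\negh)$ means $\negh\in\operatorname{H}(S)$, the hypothesis of Proposition~\ref{CEHunit} is met, and that proposition yields $\operatorname{CEH}(S)\subseteq\{\negh\}$. Therefore $|\operatorname{CEH}(S)|\leq 1$, and Theorem~\ref{prop atomo c especial} allows me to conclude that $S$ is an atom.

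The proof is essentially a bookkeeping exercise that stitches together the two prior results, so there is no single deep obstacle; the main subtlety to watch is the case $\operatorname{CEH}(S)=\emptyset$ in the forward direction, where one must remember that the positive-genus hypothesis guarantees a gap $\negh$ exists to serve as the distinguished element—without it the statement would fail since $\mathcal{F}(\negc;\negh)$ requires $\negh\leq\negc-\mathbf{1}$ to be a genuine gap. I would make sure to invoke the positive genus assumption explicitly at that point, as it is precisely what makes the "for some $\negh$" quantifier satisfiable.
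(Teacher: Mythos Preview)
Your proof is correct and follows essentially the same approach as the paper: both directions hinge on Theorem~\ref{prop atomo c especial} and Proposition~\ref{CEHunit}. The only cosmetic difference is that, for the forward direction in the case $\operatorname{CEH}(S)=\emptyset$, the paper invokes Remark~\ref{CEHvazio} (maximality in all of $\mathcal{F}(\negc)$) whereas you apply Proposition~\ref{CEHunit} uniformly via the trivial inclusion $\emptyset\subseteq\{\negh\}$; your handling is arguably cleaner and makes the role of the positive-genus hypothesis more explicit.
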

\begin{proof}
	$(\Rightarrow)$ If $S$ is an atom, then, by Theorem \ref{prop atomo c especial}, we conclude that $|\operatorname{CEH}(S)| \leq 1$. If $\operatorname{CEH}(S) = \emptyset$, then Remark \ref{CEHvazio} guarantees that $S$ is maximal in $\mathcal{F}(\negc)$. In particular, it is maximal in the subset of GNSs in $\mathcal{F}(\negc)$ such that $\negh$ is a gap, for any $\negh$. If $\operatorname{CEH}(S)$ has one element $\negh$, then Proposition \ref{CEHunit} ensures that $S \in \mathcal{M} \mathcal{F}(\negc; \negh)$. \\
	$(\Leftarrow)$ If $S \in \mathcal{M} \mathcal{F}(\negc; \negh)$ for some $\negh\in \mathbb{N}_0^d$, then it follows from Proposition~\ref{CEHunit} that $\operatorname{CEH}(S)\subseteq \{\negh\}$. It implies that $|\operatorname{CEH}(S)|\leq 1$ and hence $S$ is atom by Theorem~\ref{prop atomo c especial}.
\end{proof}


We can construct all GNSs $S$ in the set $\mathcal{F}(\negc; \negh)$ and thus compute the elements in $\mathcal{M} \mathcal{F}(\negc; \negh)$ by a standard
procedure. Starting with $S=\mathcal{O}(\negc)$, the ordinary GNS with corner element $\negc$, we consider the unitary extensions $S\cup \{\negx\}$, where $\negx\in \operatorname{CEH}(S)$ and $\negx\neq \negh$. For each of these new GNSs, we repeat the process to obtain new ones arising by adding the corner special gaps which are
different from $\negh$. This procedure ends up after a finite number of steps since $\mathcal{F}(\negc; \negh)$ is a finite set. The elements of $S\in \mathcal{M} \mathcal{F}(\negc; \negh)$ are exactly those in $\mathcal{F}(\negc; \negh)$ satisfying either $\operatorname{CEH}(S)=\emptyset$ or $\operatorname{CEH}(S)=\{\negh\}$.

\begin{example} \label{exemplo MF}
Let $S=\mathcal{O}((3,2))$ be the ordinary GNS of $\mathbb{N}_0^2$ with corner element $\negc=(3,2)$ and consider $\negh=(2,1)$. Let us compute the sets $\mathcal{F}(\negc; \negh)$ and $\mathcal{MF}(\negc; \negh)$ by the previous procedure. As $\operatorname{CEH}(S)=\{(0,1), (1,1), (2,0), (2,1)\}$, adding the elements of $\operatorname{CEH}(S)$ which are different from $\negh$, we have the GNSs in $\mathcal{F}(\negc; \negh)$: 
\begin{itemize}
    \item $S_1=S\cup \{(0,1)\}$ with $\operatorname{CEH}(S_1)=\{(1,1), (2, 1)\}$;
    \item $S_2=S\cup \{(1,1)\}$ with $\operatorname{CEH}(S_2)=\{(0,1), (2,0), (2,1)\}$;
    \item $S_3=S\cup \{(2,0)\}$ with $\operatorname{CEH}(S_3)=\{(1,0),(1,1)\}$;
\end{itemize}
For each of these GNSs, adding the corner special gaps which are different from $\negh$, we obtain the following GNSs in $\mathcal{F}(\negc; \negh)$: 
\begin{itemize}
    \item $S_4=S_1\cup\{(1,1)\}$ with $\operatorname{CEH}(S_4)=\emptyset$;
    \item $S_5=S_2\cup\{(0,1)\}=S_4$;
    \item $S_6=S_2\cup \{(2,0)\}$ with $\operatorname{CEH}(S_6)=\emptyset$;
    \item $S_7=S_3\cup\{(1,0)\}$ with $\operatorname{CEH}(S_7)=\emptyset$;
    \item $S_8=S_3\cup \{(1,1)\}=S_6$. 
\end{itemize}
Hence, $\mathcal{F}(\negc; \negh)=\{S,S_1,S_2,S_3, S_4, S_6, S_7\}$ and therefore $\mathcal{MF}(\negc; \negh)=\{S_4,S_6,S_7\}$, whose elements are the irreducible GNSs with corner element $\negc$ (see Proposition~\ref{prop irreducible MF}).
\end{example}

Observe in Example~\ref{exemplo MF} that the aforementioned procedure may produce repeated GNSs in its steps. In order to avoid a such redundant computations, one can implement a simple modification in the procedure by using a monomial order in $\mathbb{N}_0^d$, in the same way as used in \cite{BTT} to organize the GNSs with a fixed corner element in a rooted tree. Given a monomial order $\preceq$ in $\mathbb{N}_0^d$, once we have obtained the unitary extensions of the ordinary GNS $\mathcal{O}(\negc)$ by adding its corner special gaps different from $\negh$, instead of repeating this process for each of the these GNSs $T$, we just add the corner special gaps different from $\negh$ which are smaller than the elements $\negs$ in $T^*$ satisfying $\negs\leq \negc-\mathbf{1}$ with respect to the fixed monomial order $\preceq$ (cf. Section 5 of \cite{BTT}). 

Figure 1 illustrates the GNSs of Example~\ref{exemplo MF} given by this method by using the lexicographic order in $\mathbb{N}_0^2$. The elements marked in red and black bullets are respectively gaps and non-gaps of the GNSs in that example. Notice that there is no repeated GNSs with this procedure.

\begin{figure}[htp] \label{fig2}
		\begin{center}
			\begin{tikzpicture}[grow=right, level distance=6.5cm,
  level 1/.style={sibling distance=2cm},
  level 2/.style={sibling distance=1.5cm}]
			\node {\begin{tikzpicture}[scale=.5]
                    \draw (1,1) node [above] {\tiny $S$};
				\draw [mark=*, color=red] plot (1,0);
				\draw [mark=*, color=red] plot (0,1);
				\draw [mark=*] plot (0,0);
				\draw [mark=*, color=red] plot (2,0);
				\draw [mark=*, color=red] plot (1,1);
				\draw [mark=*, color=red] plot (2,1);
				\end{tikzpicture}} 
                child { node {\begin{tikzpicture}[scale=.5]
                        \draw (1,1) node [above] {\tiny $S_3$};
					\draw [mark=*, color=red] plot (1,0);
					\draw [mark=*, color=red] plot (0,1);
					\draw [mark=*] plot (0,0);
					\draw [mark=*] plot (2,0);
					\draw [mark=*, color=red] plot (1,1);
					\draw [mark=*, color=red] plot (2,1);
					\end{tikzpicture}}
				child { node {\begin{tikzpicture}[scale=.5]
                            \draw (1,1) node [above] {\tiny $S_6$};
						\draw [mark=*, color=red] plot (1,0);
						\draw [mark=*, color=red] plot (0,1);
						\draw [mark=*] plot (0,0);
						\draw [mark=*] plot (2,0);
						\draw [mark=*] plot (1,1);
						\draw [mark=*, color=red] plot (2,1);
						\end{tikzpicture}} }
                    child { node {\begin{tikzpicture}[scale=.5]
                            \draw (1,1) node [above] {\tiny $S_7$};
						\draw [mark=*] plot (1,0);
						\draw [mark=*, color=red] plot (0,1);
						\draw [mark=*] plot (0,0);
						\draw [mark=*] plot (2,0);
						\draw [mark=*, color=red] plot (1,1);
						\draw [mark=*, color=red] plot (2,1);
						\end{tikzpicture}} }}
			child { node {\begin{tikzpicture}[scale=.5]
                        \draw (1,1) node [above] {\tiny $S_2$};
					\draw [mark=*, color=red] plot (1,0);
					\draw [mark=*, color=red] plot (0,1);
					\draw [mark=*] plot (0,0);
					\draw [mark=*, color=red] plot (2,0);
					\draw [mark=*] plot (1,1);
					\draw [mark=*, color=red] plot (2,1);
					\end{tikzpicture}} 
				child { node {\begin{tikzpicture}[scale=.5]
                            \draw (1,1) node [above] {\tiny $S_4$};
						\draw [mark=*, color=red] plot (1,0);
						\draw [mark=*] plot (0,1);
						\draw [mark=*] plot (0,0);
						\draw [mark=*, color=red] plot (2,0);
						\draw [mark=*] plot (1,1);
						\draw [mark=*, color=red] plot (2,1);
						\end{tikzpicture}} }}
                    child { node {\begin{tikzpicture}[scale=.5]
                        \draw (1,1) node [above] {\tiny $S_1$};
					\draw [mark=*, color=red] plot (1,0);
					\draw [mark=*] plot (0,1);
					\draw [mark=*] plot (0,0);
					\draw [mark=*, color=red] plot (2,0);
					\draw [mark=*, color=red] plot (1,1);
					\draw [mark=*, color=red] plot (2,1);
					\end{tikzpicture}} 
			};
		\end{tikzpicture}
	\end{center}
	\caption{GNSs in $\mathcal{F}(\negc; \negh)$ for $\negc=(3,2)$ and $\negh=(2,1)$ of Example~\ref{exemplo MF}, with the use of the lexicographic order of $\mathbb{N}_0^2$.}
\end{figure}

By employing this idea, the elements of $\mathcal{F}(\negc; \negh)$ can actually be arranged in a rooted subtree of the tree given in \cite[Section 5]{BTT} whose vertices are $\mathcal{F}(\negc)$, with the root also given by the ordinary GNS $\mathcal{O}(\negc)$.

\begin{example} \label{exemplo mf2}
Let $S=\mathcal{O}((3,2))$ be the ordinary GNS of $\mathbb{N}_0^2$ with corner element $\negc=(3,2)$ and consider $\negh=(2,0)$. Let us compute the sets $\mathcal{F}(\negc; \negh)$ and $\mathcal{MF}(\negc; \negh)$ by implementing the modified procedure with the lexicographic order $\preceq$ of $\mathbb{N}_0^2$. Since $\operatorname{CEH}(S)=\{(0,1), (1,1), (2,0), (2,1)\}$, adding the elements of $\operatorname{CEH}(S)$ different from $\negh$, we have the GNSs in $\mathcal{F}(\negc; \negh)$: 
\begin{itemize}
    \item $S_1=S\cup \{(0,1)\}$ with $\operatorname{CEH}(S_1)=\{(1,1), (2, 1)\}$;
    \item $S_2=S\cup \{(1,1)\}$ with $\operatorname{CEH}(S_2)=\{(0,1), (2,0), (2,1)\}$;
    \item $S_3=S\cup \{(2,1)\}$ with $\operatorname{CEH}(S_3)=\{(0,1), (1,1)\}$.
\end{itemize}
For each of these GNSs, we now consider unitary extensions by the corner special gaps different from $\negh$ and smaller than the non-zero elements of GNS ($\leq (2, 1)$) with respect to the lexicographic order $\preceq$ such that :
\begin{itemize}
    \item $S_4=S_2\cup\{(0,1)\}$ with $\operatorname{CEH}(S_4)=\emptyset$;
    \item $S_5=S_3\cup\{(0,1)\}$ with $\operatorname{CEH}(S_5)=\emptyset$;
    \item $S_6=S_3\cup\{(1,1)\}$ with $\operatorname{CEH}(S_6)=\emptyset$.
\end{itemize}
Hence, $\mathcal{F}(\negc; \negh)=\{S,S_1,S_2,\ldots,S_6\}$ and therefore $\mathcal{MF}(\negc; \negh)=\{S_4,S_5, S_6\}$. In particular, the elements of $\mathcal{F}(\negc; \negh)$ can be organized in a tree as in Figure 3.

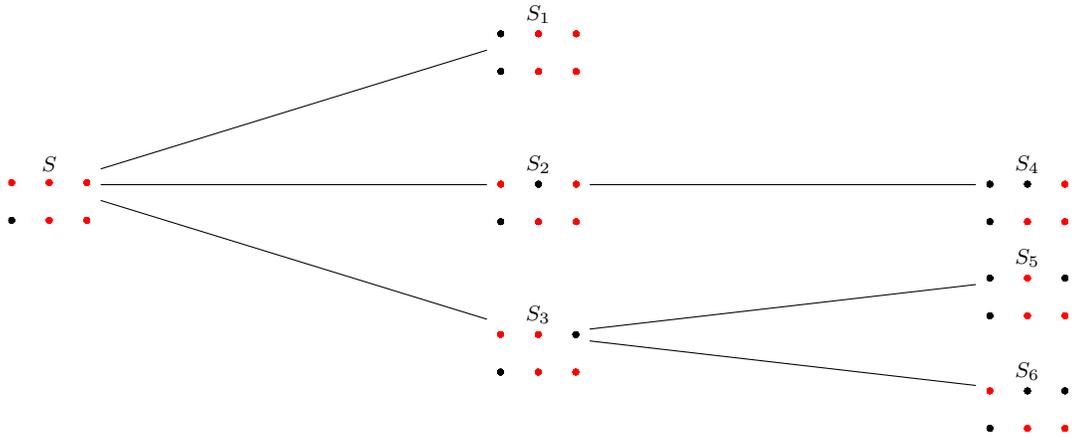
\begin{figure}[htp] \label{fig3}
		\begin{center}
			\begin{tikzpicture}[grow=right, level distance=6.5cm,
  level 1/.style={sibling distance=2cm},
  level 2/.style={sibling distance=1.5cm}]
			\node {\begin{tikzpicture}[scale=.5]
                    \draw (1,1) node [above] {\tiny $S$};
				\draw [mark=*, color=red] plot (1,0);
				\draw [mark=*, color=red] plot (0,1);
				\draw [mark=*] plot (0,0);
				\draw [mark=*, color=red] plot (2,0);
				\draw [mark=*, color=red] plot (1,1);
				\draw [mark=*, color=red] plot (2,1);
				\end{tikzpicture}} 
                child { node {\begin{tikzpicture}[scale=.5]
                        \draw (1,1) node [above] {\tiny $S_3$};
					\draw [mark=*, color=red] plot (1,0);
					\draw [mark=*, color=red] plot (0,1);
					\draw [mark=*] plot (0,0);
					\draw [mark=*] plot (2,1);
					\draw [mark=*, color=red] plot (1,1);
					\draw [mark=*, color=red] plot (2,0);
					\end{tikzpicture}}
				child { node {\begin{tikzpicture}[scale=.5]
                            \draw (1,1) node [above] {\tiny $S_6$};
						\draw [mark=*, color=red] plot (1,0);
						\draw [mark=*, color=red] plot (0,1);
						\draw [mark=*] plot (0,0);
						\draw [mark=*] plot (2,1);
						\draw [mark=*] plot (1,1);
						\draw [mark=*, color=red] plot (2,0);
						\end{tikzpicture}} }
                    child { node {\begin{tikzpicture}[scale=.5]
                            \draw (1,1) node [above] {\tiny $S_5$};
						\draw [mark=*, color=red] plot (1,0);
						\draw [mark=*] plot (0,1);
						\draw [mark=*] plot (0,0);
						\draw [mark=*] plot (2,1);
						\draw [mark=*, color=red] plot (1,1);
						\draw [mark=*, color=red] plot (2,0);
						\end{tikzpicture}} }}
			child { node {\begin{tikzpicture}[scale=.5]
                        \draw (1,1) node [above] {\tiny $S_2$};
					\draw [mark=*, color=red] plot (1,0);
					\draw [mark=*, color=red] plot (0,1);
					\draw [mark=*] plot (0,0);
					\draw [mark=*, color=red] plot (2,0);
					\draw [mark=*] plot (1,1);
					\draw [mark=*, color=red] plot (2,1);
					\end{tikzpicture}} 
				child { node {\begin{tikzpicture}[scale=.5]
                            \draw (1,1) node [above] {\tiny $S_4$};
						\draw [mark=*, color=red] plot (1,0);
						\draw [mark=*] plot (0,1);
						\draw [mark=*] plot (0,0);
						\draw [mark=*, color=red] plot (2,0);
						\draw [mark=*] plot (1,1);
						\draw [mark=*, color=red] plot (2,1);
						\end{tikzpicture}} }}
                    child { node {\begin{tikzpicture}[scale=.5]
                        \draw (1,1) node [above] {\tiny $S_1$};
					\draw [mark=*, color=red] plot (1,0);
					\draw [mark=*] plot (0,1);
					\draw [mark=*] plot (0,0);
					\draw [mark=*, color=red] plot (2,0);
					\draw [mark=*, color=red] plot (1,1);
					\draw [mark=*, color=red] plot (2,1);
					\end{tikzpicture}} 
			};
		\end{tikzpicture}
	\end{center}
	\caption{The tree of GNSs in $\mathcal{F}(\negc; \negh)$ for $\negc=(3,2)$ and $\negh=(2,0)$ of Example~\ref{exemplo mf2}, with respect to the lexicographic order of $\mathbb{N}_0^2$.}
\end{figure}
\end{example}

Varying the elements $\negh$ and computing the sets $\mathcal{MF}(\negc;\negh)$, we obtain the atoms of $\mathcal{F}(\negc)$, possibly with some repetitions. It raises the question about how many elements $\negh$ are necessary to obtain all atoms in $\mathcal{F}(\negc)$ by proceeding in this way. Notice from the Examples \ref{exemplo MF} and \ref{exemplo mf2} that the atoms of $\mathcal{F}((3, 2))$ can be obtained with $\negh\in \{(2,0), (2,1)\}$.





\section{On the irreducibility of Frobenius GNSs in terms of a maximal property}
  
In this section, we provide sufficient conditions for certain Frobenius GNSs to be an atom non-irreducible (ANI). Furthermore, we given necessary and sufficient conditions so that maximal elements of a set of Frobenius GNSs with two fixes gaps to be all irreducible or not.  

The following result shows that irreducible GNSs are maximal in the set of the Frobenius GNSs with Frobenius element $\negf$. We point out that there is a characterization of irreducible GNSs with Frobenius element $\negf$ as maximal GNSs that do no contain $\negf$ (see \cite[Proposition 4.11]{CFPU}). In particular, those results extends in a similar way what we have for numerical semigroups (see \cite[Theorem 1]{R-B}).

\begin{proposition}\label{prop irreducible MF}
	Let $S\subseteq \mathbb{N}_0^d$ be a GNS. Then $S$ is irreducible with Frobenius element $\negf$ if and only if $S\in \mathcal{MF}(\negf+\mathbf{1}; \negf)$.
\end{proposition}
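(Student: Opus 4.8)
The plan is to reduce the statement to a comparison of the sets $\operatorname{EH}(S)$ and $\operatorname{CEH}(S)$ for Frobenius GNSs, exploiting that the Frobenius element is the global maximum of the gaps. First I would record the translation between the Frobenius data and the corner data: by the remark following Proposition~\ref{lub}, a GNS $S$ is Frobenius with $\operatorname{F}(S)=\negf$ if and only if $S$ has corner element $\negf+\mathbf{1}$ and $\negf\in\operatorname{H}(S)$; equivalently, $S$ is a Frobenius GNS with Frobenius element $\negf$ precisely when $S\in\mathcal{F}(\negf+\mathbf{1};\negf)$. This turns the proposition into the assertion that, within this family, irreducibility is equivalent to being maximal with respect to inclusion, which I would detect through $\operatorname{CEH}(S)$ via Proposition~\ref{CEHunit}: one has $S\in\mathcal{MF}(\negf+\mathbf{1};\negf)$ if and only if $\operatorname{CEH}(S)\subseteq\{\negf\}$.

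The second ingredient I would isolate is that $\negf=\operatorname{F}(S)$ always lies in $\operatorname{EH}(S)$. Indeed $\negf=\negc-\mathbf{1}$ has $i$-th coordinate $c_i-1$ for every $i$, so $\negf\in\operatorname{H}(S)^{(i)}$ for all $i$, and being the unique maximal gap it is maximal in each $\operatorname{H}(S)^{(i)}$; Lemma~\ref{maximals especial gap} then gives $\negf\in\operatorname{EH}(S)$. With this, the forward direction is short: if $S$ is irreducible with Frobenius element $\negf$, then $S\in\mathcal{F}(\negf+\mathbf{1};\negf)$, and Proposition~\ref{especial irredutivel} forces $|\operatorname{EH}(S)|=1$, so $\operatorname{EH}(S)=\{\negf\}$ and a fortiori $\operatorname{CEH}(S)\subseteq\operatorname{EH}(S)=\{\negf\}$; Proposition~\ref{CEHunit} then yields $S\in\mathcal{MF}(\negf+\mathbf{1};\negf)$.

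For the converse I would start from $S\in\mathcal{MF}(\negf+\mathbf{1};\negf)$, note that $\negf=\negc-\mathbf{1}\in\operatorname{H}(S)$ makes $S$ Frobenius with $\operatorname{F}(S)=\negf$, and use Proposition~\ref{CEHunit} to get $\operatorname{CEH}(S)\subseteq\{\negf\}$. The crux is to upgrade this to $\operatorname{EH}(S)=\{\negf\}$: given any $\negh\in\operatorname{EH}(S)$ with $\negh\neq\negf$, the Frobenius property makes $\negf$ the maximum gap, so $\negh<\negf\in\operatorname{H}(S)$, and Proposition~\ref{c-especial dois elementos} places $\negh\in\operatorname{CEH}(S)\subseteq\{\negf\}$, a contradiction. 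Combined with $\negf\in\operatorname{EH}(S)$ from the previous step, this gives $\operatorname{EH}(S)=\{\negf\}$, whence $|\operatorname{EH}(S)|=1$ and Proposition~\ref{especial irredutivel} delivers irreducibility. I expect the only genuinely delicate point to be this last step, where one must rule out additional special gaps: it hinges precisely on the comparability $\negh<\negf$ supplied by the Frobenius structure together with Proposition~\ref{c-especial dois elementos}, which is exactly where the argument would break down for a corner element that is not of Frobenius type.
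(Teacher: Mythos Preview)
Your proof is correct and uses essentially the same ingredients as the paper's: the identification of Frobenius GNSs with Frobenius element $\negf$ as the family $\mathcal{F}(\negf+\mathbf{1};\negf)$, the fact that $\negf\in\operatorname{EH}(S)$ via Lemma~\ref{maximals especial gap}, the characterization of irreducibility by $|\operatorname{EH}(S)|=1$, and Proposition~\ref{c-especial dois elementos} to push any other special gap into $\operatorname{CEH}(S)$.

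The organizational difference is that you route both directions through Proposition~\ref{CEHunit}, reducing the question to whether $\operatorname{CEH}(S)\subseteq\{\negf\}$, and then argue about $\operatorname{EH}(S)$ versus $\operatorname{CEH}(S)$. The paper instead handles maximality directly with Lemma~\ref{lemma maximals}: for the forward direction it picks $\negx\in\mathrm{Maximals}(T\setminus S)$ to contradict $\operatorname{EH}(S)=\{\negf\}$, and for the converse it starts from a hypothetical decomposition $S=S_1\cap S_2$, chooses $S_1\not\ni\negf$, and produces $\negx\in\operatorname{CEH}(S)\setminus\{\negf\}$ from $\mathrm{Maximals}(S_1\setminus S)$. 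Your version is slightly more modular since it reuses Proposition~\ref{CEHunit} rather than replaying its argument, but the underlying mechanism is the same.
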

\begin{proof} Let $S$ be an irreducible GNS with Frobenius element $\negf$. Hence, we have $|\operatorname{EH}(S)|=1$ by Proposition~\ref{especial irredutivel}, and from Lemma~\ref{maximals especial gap}, we get $\operatorname{EH}(S)=\{\negf\}$. Moreover, since $S$ is a Frobenius GNS, its corner element is $\negf+\mathbf{1}$. It shows that $S\in \mathcal{F}(\negf+\mathbf{1}; \negf)$. Assume now that $S\subsetneq T$ with $T\in \mathcal{F}(\negf+\mathbf{1}; \negf)$. Picking an element $\negx\in Maximals(T\setminus S)$, we get from Lemma~\ref{lemma maximals} that $\negx\in \operatorname{EH}(S)$ with $\negx\neq \negf$ because $\negf\notin T$, which is a contradiction. Therefore, $S\in \mathcal{MF}(\negf+\mathbf{1}; \negf)$.\\
	Conversely, let $S\in \mathcal{MF}(\negf+\mathbf{1}; \negf)$. In particular, $S$ is a Frobenius GNS with $\negf$ its Frobenius element.  If $S$ is not irreducible, then there exist GNSs $S_1$ and $S_2$ such that $S=S_1\cap S_2$ with $S\subsetneq S_1$ and $S\subsetneq S_2$. As $\negf\notin S$, we can suppose without loss of generality that $\negf\notin S_1$. For $\negx\in Maximals(S_1\setminus S)$, we have $\negx\in \operatorname{EH}(S)$ by Lemma~\ref{lemma maximals} and $\negx < \negf$, which yields $\negx\in \operatorname{CEH}(S)$ by Proposition~\ref{c-especial dois elementos}. It implies that $S\cup \{\negx\}\in \mathcal{F}(\negf+\mathbf{1}; \negf)$, which contradicts the maximality of $S$. Therefore, $S$ is irreducible.
\end{proof}

The next result gives sufficient conditions so that every Frobenius GNS  in \linebreak $\mathcal{M}\mathcal{F}(\mathbf{g_2 + 1};\mathbf{g_1})$ to be an atom non-irreducible (ANI).
 
\begin{proposition}\label{prop ANI}
Let $\mathbf{g_1}, \mathbf{g_2}\in\mathbb{N}_0^d$ such that $\mathbf{g_2}$ is a multiple of $\mathbf{g_2}-\mathbf{g_1}$ and $2\mathbf{g_1}\neq \mathbf{g_2}$. Then, every Frobenius GNS in $\mathcal{F}(\mathbf{g_2 + 1};\mathbf{g_1})$ is non-irreducible. In particular, every Frobenius GNS in $\mathcal{M}\mathcal{F}(\mathbf{g_2 + 1};\mathbf{g_1})$ is an ANI.
\end{proposition}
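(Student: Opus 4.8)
The plan is to reduce the claim to the Frobenius-sum criterion of Lemma~\ref{Lema soma Frobenius} and to produce a single decomposition of $\operatorname{F}(S)$ into two gaps, which is exactly what that criterion forbids for irreducible GNSs. Fix a Frobenius GNS $S\in\mathcal{F}(\mathbf{g_2}+\mathbf{1};\mathbf{g_1})$. By the definition of this set its corner element is $\mathbf{g_2}+\mathbf{1}$, so the Frobenius condition together with Proposition~\ref{lub} gives $\operatorname{F}(S)=\mathbf{g_2}$; moreover $\mathbf{g_1}\in\operatorname{H}(S)$ and (since $\mathbf{g_1}\leq\mathbf{c}-\mathbf{1}=\mathbf{g_2}$ is built into the definition of $\mathcal{F}(\mathbf{c};\mathbf{h})$) the element $\mathbf{d}:=\mathbf{g_2}-\mathbf{g_1}\in\mathbb{N}_0^d$ is well defined. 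The whole argument rests on the pair $(\mathbf{g_1},\mathbf{d})$, which satisfies $\mathbf{g_1}+\mathbf{d}=\mathbf{g_2}=\operatorname{F}(S)$.

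The genuine content is to show that $\mathbf{d}$ is itself a gap of $S$, and this is where the multiple hypothesis enters. Write $\mathbf{g_2}=k\,\mathbf{d}$ with $k$ the positive integer supplied by the assumption; then $\mathbf{g_1}=\mathbf{g_2}-\mathbf{d}=(k-1)\mathbf{d}$. Since $\mathbf{g_1}$ is a gap it is nonzero, which forces $k\geq 2$, so $k-1\geq 1$. If $\mathbf{d}$ were in $S$, then closure of $S$ under addition would put the sum of $k-1$ copies of $\mathbf{d}$, namely $(k-1)\mathbf{d}=\mathbf{g_1}$, in $S$, contradicting $\mathbf{g_1}\in\operatorname{H}(S)$. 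Hence $\mathbf{d}\in\operatorname{H}(S)$.

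Next I would apply Lemma~\ref{Lema soma Frobenius} to the decomposition $\mathbf{g_1}+\mathbf{d}=\operatorname{F}(S)$ with both summands gaps. The hypothesis $2\mathbf{g_1}\neq\mathbf{g_2}$ yields $\mathbf{g_1}\neq\mathbf{d}$ and also $\mathbf{g_1},\mathbf{d}\neq\mathbf{g_2}/2$, the latter because $\mathbf{d}=\mathbf{g_2}/2$ would again give $\mathbf{g_1}=\mathbf{g_2}/2$. Thus in both the odd-component case (1) and the all-even case (2) of the lemma, $(\mathbf{g_1},\mathbf{d})$ is an admissible pair summing to $\operatorname{F}(S)$ with \emph{both} entries in $\operatorname{H}(S)$; this violates the ``exactly one is a gap'' condition, so $S$ is non-irreducible. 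This establishes the first assertion for every Frobenius GNS in $\mathcal{F}(\mathbf{g_2}+\mathbf{1};\mathbf{g_1})$.

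For the ``in particular'' clause, let $S\in\mathcal{M}\mathcal{F}(\mathbf{g_2}+\mathbf{1};\mathbf{g_1})$ be a Frobenius GNS. Its Frobenius element $\mathbf{g_2}$ is a gap, so $S$ has positive genus and Theorem~\ref{atom MF} shows $S$ is an atom; the first part shows it is non-irreducible, hence an ANI. I expect the only delicate bookkeeping to be the parity split of Lemma~\ref{Lema soma Frobenius}, i.e.\ ensuring $\mathbf{g_1}$ and $\mathbf{d}$ avoid the excluded value $\mathbf{g_2}/2$; this is dispatched directly by $2\mathbf{g_1}\neq\mathbf{g_2}$. The real mathematical step is the closure argument showing $\mathbf{d}\in\operatorname{H}(S)$, which is precisely where the assumption that $\mathbf{g_2}$ is a multiple of $\mathbf{g_2}-\mathbf{g_1}$ is indispensable.
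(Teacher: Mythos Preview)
Your proof is correct and follows essentially the same route as the paper: both use the decomposition $\mathbf{g_2}=\mathbf{g_1}+(\mathbf{g_2}-\mathbf{g_1})$ together with Lemma~\ref{Lema soma Frobenius}, and both exploit closure of $S$ under addition via the multiple hypothesis. The only cosmetic difference is that the paper argues by contradiction (assume $S$ irreducible, conclude $\mathbf{g_2}-\mathbf{g_1}\in S$, then $\mathbf{g_2}=k(\mathbf{g_2}-\mathbf{g_1})\in S$), whereas you first show directly that $\mathbf{g_2}-\mathbf{g_1}\in\operatorname{H}(S)$ (since otherwise $\mathbf{g_1}=(k-1)(\mathbf{g_2}-\mathbf{g_1})\in S$) and then invoke the lemma; this is the same idea with the contrapositive taken at a different step.
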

\begin{proof}
Suppose that there is an irreducible Frobenius GNS $S \in \mathcal{F}(\mathbf{g_2 + 1}; \mathbf{g_1})$. In particular, the Frobenius element of $S$ is $\negg_2$, $\negg_1 \notin S$ and we can write $\negg_2 = (\negg_2 - \negg_1) + \negg_1$. By Lemma \ref{Lema soma Frobenius}, it follows that $\mathbf{g_2} - \mathbf{g_1} = \negs \in S$. By hypothesis, there exists $k \in \mathbb{N}_0$ such that $\negg_2 = k(\negg_2 - \negg_1) = k \negs$, but it is a contradiction, since $\negg_2 \notin S$ and $k \negs \in S$. The second part follows from Theorem \ref{atom MF}.
\end{proof}

We can generalize the set $\mathcal{F}(\mathbf{c};\mathbf{h})$ in the following way. Let $\negc, \negh_1,\ldots, \negh_n\in \mathbb{N}_0^d$ with $\negh_i\leq \negc-\mathbf{1}$ for $i=1,\ldots, n$, one can define $\mathcal{F}(\negc; \negh_1,\ldots, \negh_n)$ the set of GNSs $S\subseteq \mathbb{N}_0^d$ with corner element $\negc$ and such that $S\cap \{\negh_1,\ldots,\negh_n\}=\emptyset$. If $\mathcal{MF}(\negc; \negh_1,\ldots, \negh_n)$ stands for the set of maximal elements (with respect to inclusion) of $\mathcal{F}(\negc; \negh_1,\ldots, \negh_n)$, with a similar arguments, the Proposition \ref{CEHunit} can be extended in the following way.

\begin{proposition}\label{CEH e MF}
	Let $S\subseteq \mathbb{N}_0^d$ be a GNS with corner $\negc$ and $\{\negh_1,\ldots, \negh_n\}\subseteq \operatorname{H}(S)$. Then $\operatorname{CEH}(S) \subseteq \{\negh_1,\ldots, \negh_n\}$ if and only if $S\in \mathcal{M} \mathcal{F}(\negc; \negh_1,\ldots, \negh_n)$.
\end{proposition}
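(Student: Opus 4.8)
The plan is to prove Proposition~\ref{CEH e MF} by adapting verbatim the two-direction argument used in the proof of Proposition~\ref{CEHunit}, since the generalization from one forbidden gap $\negh$ to a finite set $\{\negh_1,\ldots,\negh_n\}$ affects only the bookkeeping and not the underlying mechanism. The essential inputs are Lemma~\ref{lemma maximals} (maximal elements of a proper extension land in $\operatorname{EH}$) and Corollary~\ref{lemma corner subset} (a GNS squeezed between two GNSs with corner $\negc$ also has corner $\negc$), together with Proposition~\ref{GNS special}. None of these needs any modification for several forbidden gaps, so I expect no genuine obstacle; the work is to phrase the contrapositive-style reasoning so that it handles the set $\{\negh_1,\ldots,\negh_n\}$ cleanly.

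For the forward direction $(\Rightarrow)$, I would assume $\operatorname{CEH}(S)\subseteq\{\negh_1,\ldots,\negh_n\}$ and argue by contradiction that $S$ is maximal in $\mathcal{F}(\negc;\negh_1,\ldots,\negh_n)$. Suppose some $T\in\mathcal{F}(\negc)$ satisfies $S\subsetneq T$ with $\{\negh_1,\ldots,\negh_n\}\subseteq\operatorname{H}(T)$. Pick $\negx\in Maximals(T\setminus S)$; by Lemma~\ref{lemma maximals}, $\negx\in\operatorname{EH}(S)$, and since $S\subseteq S\cup\{\negx\}\subseteq T$ with both $S,T\in\mathcal{F}(\negc)$, Corollary~\ref{lemma corner subset} gives that $S\cup\{\negx\}$ has corner element $\negc$, so $\negx\in\operatorname{CEH}(S)$. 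But $\negx\in T\setminus S$ forces $\negx\notin\{\negh_1,\ldots,\negh_n\}$ (as these lie in $\operatorname{H}(T)$), contradicting $\operatorname{CEH}(S)\subseteq\{\negh_1,\ldots,\negh_n\}$. Hence no such proper $T$ exists and $S\in\mathcal{M}\mathcal{F}(\negc;\negh_1,\ldots,\negh_n)$.

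For the reverse direction $(\Leftarrow)$, I would prove the contrapositive: if $\operatorname{CEH}(S)\not\subseteq\{\negh_1,\ldots,\negh_n\}$, then $S$ is not maximal. Choose $\negx\in\operatorname{CEH}(S)$ with $\negx\neq\negh_i$ for all $i$. By Proposition~\ref{GNS special} and the definition of corner special gap, $S\cup\{\negx\}$ is a GNS with corner element $\negc$ that properly contains $S$; moreover each $\negh_i$ remains a gap of $S\cup\{\negx\}$ since $\negh_i\neq\negx$. Thus $S\cup\{\negx\}\in\mathcal{F}(\negc;\negh_1,\ldots,\negh_n)$ strictly contains $S$, so $S\notin\mathcal{M}\mathcal{F}(\negc;\negh_1,\ldots,\negh_n)$. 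This establishes both implications and completes the proof; the only point requiring care is ensuring the chosen $\negx$ is genuinely distinct from every $\negh_i$, which is immediate from the failure of the inclusion.
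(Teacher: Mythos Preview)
Your proposal is correct and follows exactly the approach the paper indicates: the paper does not spell out a proof of Proposition~\ref{CEH e MF} but states that it follows ``with a similar arguments'' from Proposition~\ref{CEHunit}, and your argument is precisely that straightforward adaptation using Lemma~\ref{lemma maximals}, Corollary~\ref{lemma corner subset}, and Proposition~\ref{GNS special}.
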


Note that $\mathcal{F}(\negg_2+\mathbf{1}; \negg_1)$ contains $\mathcal{F}(\negg_2+\mathbf{1}; \negg_1, \negg_2)$. Furthermore, the Frobenius GNSs in $\mathcal{F}(\negg_2+\mathbf{1}; \negg_1)$ are exactly the elements of $\mathcal{F}(\negg_2+\mathbf{1}; \negg_1, \negg_2)$. Hence, the Frobenius GNSs in $\mathcal{MF}(\negg_2+\mathbf{1}; \negg_1)$ lies in $\mathcal{MF}(\negg_2+\mathbf{1}; \negg_1, \negg_2)$. Now we study the set of Frobenius GNSs with fixed corner and two fixed gaps (one of them is the Frobenius element) which are maximal with respect to the inclusion. The next result gives an important characterization of the elements in $\mathcal{M}\mathcal{F}(\mathbf{g_2 + 1};\mathbf{g_1}, \mathbf{g_2})$ in terms of the set of special gaps.



\begin{proposition} \label{prop EH MF} 
Let $S\subseteq \mathbb{N}_0^d$ be a GNS and $\negg_1,\negg_2 \in \operatorname{H}(S)$, with $\negg_1 < \negg_2$. Then $S\in \mathcal{MF}(\mathbf{g_2 + 1}; \negg_1,\negg_2)$ if and only if $\operatorname{EH}(S) \subseteq \{\negg_1,\negg_2\}$ .
\end{proposition}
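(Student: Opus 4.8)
The plan is to lean on the generalized corner-special-gap/maximality correspondence, namely Proposition~\ref{CEH e MF}, which already identifies membership in $\mathcal{MF}(\negg_2+\mathbf{1};\negg_1,\negg_2)$ with the inclusion $\operatorname{CEH}(S)\subseteq\{\negg_1,\negg_2\}$, provided one knows that $S$ actually lies in $\mathcal{F}(\negg_2+\mathbf{1};\negg_1,\negg_2)$. The real content is therefore twofold: to bridge the gap between $\operatorname{CEH}(S)\subseteq\{\negg_1,\negg_2\}$ and the stronger $\operatorname{EH}(S)\subseteq\{\negg_1,\negg_2\}$, and to pin down the corner element of $S$ as $\negg_2+\mathbf{1}$.

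For the direction $(\Rightarrow)$, I would first observe that membership in $\mathcal{F}(\negg_2+\mathbf{1};\negg_1,\negg_2)$ forces the corner element of $S$ to be $\negc=\negg_2+\mathbf{1}$ with $\negg_2\in\operatorname{H}(S)$, so that $S$ is Frobenius with $\operatorname{F}(S)=\negg_2=\negc-\mathbf{1}$ by the Frobenius criterion recalled after Proposition~\ref{lub}. The elementary but crucial remark is that, since $(\negc-\mathbf{1})_i=c_i-1$ for every $i$, the gap $\negg_2$ belongs to every slice $\operatorname{H}(S)^{(i)}$. Now Proposition~\ref{CEH e MF} gives $\operatorname{CEH}(S)\subseteq\{\negg_1,\negg_2\}$, and I argue by contradiction: if some $\negh\in\operatorname{EH}(S)\setminus\{\negg_1,\negg_2\}$ existed, then $\negh\notin\operatorname{CEH}(S)$, so Proposition~\ref{lemma H(S)i} would yield an index $i$ with $\operatorname{H}(S)^{(i)}=\{\negh\}$; but $\negg_2\in\operatorname{H}(S)^{(i)}$ and $\negg_2\neq\negh$, a contradiction. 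Hence $\operatorname{EH}(S)\subseteq\{\negg_1,\negg_2\}$.

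For the direction $(\Leftarrow)$, assuming $\operatorname{EH}(S)\subseteq\{\negg_1,\negg_2\}$, I would first note that $S$ has positive genus, since $\negg_1,\negg_2\in\operatorname{H}(S)$, so $\operatorname{EH}(S)\neq\emptyset$ by Lemma~\ref{maximals especial gap}. Next I would rule out the possibility $\operatorname{EH}(S)=\{\negg_1\}$: this would give $\negc=\negg_1+\mathbf{1}$ by Corollary~\ref{lub lacunas especiais}, hence $\operatorname{lub}(\operatorname{H}(S))=\negg_1$ and so $\negg_2\leq\negg_1$, contradicting $\negg_1<\negg_2$. Therefore $\negg_2\in\operatorname{EH}(S)$, and since $\negg_1\leq\negg_2$ we get $\operatorname{lub}(\operatorname{EH}(S))=\negg_2$, so $\negc=\negg_2+\mathbf{1}$ by Corollary~\ref{lub lacunas especiais}. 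This places $S$ in $\mathcal{F}(\negg_2+\mathbf{1};\negg_1,\negg_2)$, and because $\operatorname{CEH}(S)\subseteq\operatorname{EH}(S)\subseteq\{\negg_1,\negg_2\}$, Proposition~\ref{CEH e MF} concludes that $S\in\mathcal{MF}(\negg_2+\mathbf{1};\negg_1,\negg_2)$.

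The step I expect to require the most care is the $(\Rightarrow)$ upgrade from $\operatorname{CEH}(S)$ to $\operatorname{EH}(S)$: the mechanism is that a special gap which fails to be corner-special must, by Proposition~\ref{lemma H(S)i}, be the unique element of some slice $\operatorname{H}(S)^{(i)}$, and the decisive point is that the Frobenius gap $\negg_2$ already occupies every such slice, leaving no room for a second singleton slice. Correctly handling the bookkeeping of the slices $\operatorname{H}(S)^{(i)}$, together with identifying the corner as $\negg_2+\mathbf{1}$ in both directions (which is exactly what makes $\negg_2$ sit in each slice), is where the argument is most delicate.
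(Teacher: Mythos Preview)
Your proof is correct, but it takes a somewhat different route from the paper's. The paper argues both directions directly, without invoking Proposition~\ref{CEH e MF} or the slice machinery. For $(\Rightarrow)$, the paper simply observes that any $\negx\in\operatorname{EH}(S)\setminus\{\negg_1,\negg_2\}$ satisfies $\negx<\negg_2$ (since $S$ is Frobenius with $\operatorname{F}(S)=\negg_2$), so $S\cup\{\negx\}$ still has corner $\negg_2+\mathbf{1}$ and still omits $\negg_1,\negg_2$, contradicting maximality. For $(\Leftarrow)$, the paper uses Corollary~\ref{lemma auxiliar} to place $S$ in $\mathcal{F}(\negg_2+\mathbf{1};\negg_1,\negg_2)$ and then Lemma~\ref{lemma maximals} directly: any proper extension $T$ of $S$ picks up some $\negx\in\operatorname{EH}(S)\subseteq\{\negg_1,\negg_2\}$, so $T$ falls out of the family. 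Your approach instead routes everything through Proposition~\ref{CEH e MF}, reducing the question to $\operatorname{CEH}(S)\subseteq\{\negg_1,\negg_2\}$, and then bridges $\operatorname{CEH}$ to $\operatorname{EH}$ via Proposition~\ref{lemma H(S)i} and the observation that $\negg_2$ sits in every slice $\operatorname{H}(S)^{(i)}$. This buys a clean conceptual reduction to already-established machinery, and your $(\Leftarrow)$ is actually more explicit than the paper's in ruling out $\operatorname{EH}(S)=\{\negg_1\}$; the paper's route is shorter and more self-contained, needing nothing beyond the basic unitary-extension lemmas.
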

\begin{proof}
$(\Rightarrow)$ Suppose that there exists $\mathbf{x} \in \operatorname{EH}(S)$ such that $\negx \neq \negg_1$ and $\negx \neq \negg_2$. By Proposition \ref{GNS special}, we have that $S \cup \{ \mathbf{x} \}$ is a GNS containing $S$ properly. Since the Frobenius element of $S$ is $\negg_2 + \neg1$, then $\negx < \negg_2$ and the corner element of $S \cup \{\negx\}$ is also $\negg_2 + \neg1$. Hence, $S \cup \{\negx\} \in \mathcal{F}(\negg_2 + \neg1; \negg_1, \negg_2)$, which is a contradiction, by the maximality of $S$.

$(\Leftarrow)$ Suppose that $\operatorname{EH}(S) \subseteq \{\negg_1,\negg_2\}$. So, by Corollary \ref{lemma auxiliar}, \linebreak $S \in \mathcal{F}(\mathbf{g_2 + 1}; \negg_1,\negg_2)$, since $\negg_1 < \negg_2$. Let $T$ be a GNS such that $S \subsetneq T$ and $\mathbf{x}\in Maximals( T\setminus S)$. So, by Lemma \ref{lemma maximals}, $\mathbf{x} \in \operatorname{EH}(S)$. Since  $\operatorname{EH}(S) \subseteq \{\mathbf{g_1},\mathbf{g_2}\}$, we have that $\mathbf{x} = \mathbf{g_i}$ for some $i$, and thus $T \notin \mathcal{F}(\mathbf{g_2 + 1};\mathbf{g_1}, \mathbf{g_2})$. Therefore,  $S\in \mathcal{MF}(\mathbf{g_2 + 1}; \negg_1,\negg_2)$.
\end{proof}

In the follow, we present necessary and sufficient conditions on $\negg_1$ and $\negg_2$ so that every element in $\mathcal{M}\mathcal{F}(\mathbf{g_2 + 1};\mathbf{g_1}, \mathbf{g_2})$ is non-irreducible. By a multiple of an element $\negx \in \mathbb{N}_0^d$, we refer to an element $\negy \in \mathbb{N}_0^d$ such that $\negy = k\negx$, for some $k \in \mathbb{N}$.

\begin{theorem}\label{teo ANI}
Let $\mathbf{g_1}, \mathbf{g_2}\in\mathbb{N}_0^d$. Every element in $\mathcal{M}\mathcal{F}(\mathbf{g_2 + 1};\mathbf{g_1}, \mathbf{g_2})$ is non-irreducible if and only if $\mathbf{g_2}$ is a multiple of $\mathbf{g_2}-\mathbf{g_1}$ and $2\mathbf{g_1}\neq \mathbf{g_2}$. 
\end{theorem}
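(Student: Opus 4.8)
The plan is to first translate the statement into an existence question about irreducible GNSs, and then settle the two implications. Note that every $S\in\mathcal{MF}(\mathbf{g_2}+\mathbf{1};\mathbf{g_1},\mathbf{g_2})$ has corner element $\mathbf{g_2}+\mathbf{1}$ with $\mathbf{g_2}\in\operatorname{H}(S)$, hence is a Frobenius GNS with $\operatorname{F}(S)=\mathbf{g_2}$, so that $\mathbf{g_2}\in\operatorname{EH}(S)$; moreover Proposition~\ref{prop EH MF} gives $\operatorname{EH}(S)\subseteq\{\mathbf{g_1},\mathbf{g_2}\}$. By Proposition~\ref{especial irredutivel}, such an $S$ is irreducible exactly when $\operatorname{EH}(S)=\{\mathbf{g_2}\}$, i.e. when $\mathbf{g_1}\notin\operatorname{EH}(S)$. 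Consequently, ``every element of $\mathcal{MF}(\mathbf{g_2}+\mathbf{1};\mathbf{g_1},\mathbf{g_2})$ is non-irreducible'' is equivalent to ``there is no irreducible GNS with Frobenius element $\mathbf{g_2}$ having $\mathbf{g_1}$ as a gap.'' Throughout I assume $\mathbf{g_1}<\mathbf{g_2}$, as otherwise $\mathcal{MF}(\mathbf{g_2}+\mathbf{1};\mathbf{g_1},\mathbf{g_2})$ is empty.

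For the implication $(\Leftarrow)$, suppose $\mathbf{g_2}$ is a multiple of $\mathbf{s}:=\mathbf{g_2}-\mathbf{g_1}$ and $2\mathbf{g_1}\neq\mathbf{g_2}$. This is precisely the hypothesis of Proposition~\ref{prop ANI}, which asserts that every Frobenius GNS in $\mathcal{F}(\mathbf{g_2}+\mathbf{1};\mathbf{g_1})$ is non-irreducible. Since every element of $\mathcal{MF}(\mathbf{g_2}+\mathbf{1};\mathbf{g_1},\mathbf{g_2})$ is such a Frobenius GNS, the conclusion is immediate.

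For the converse $(\Rightarrow)$ I would argue by contraposition, producing an irreducible element of $\mathcal{MF}(\mathbf{g_2}+\mathbf{1};\mathbf{g_1},\mathbf{g_2})$ whenever the right-hand side fails. If $2\mathbf{g_1}=\mathbf{g_2}$, I enlarge the ordinary GNS $\mathcal{O}(\mathbf{g_2}+\mathbf{1})\in\mathcal{F}(\mathbf{g_2}+\mathbf{1};\mathbf{g_2})$ to a maximal element $T$ of the finite set $\mathcal{F}(\mathbf{g_2}+\mathbf{1};\mathbf{g_2})$; by Proposition~\ref{prop irreducible MF}, $T$ is irreducible with $\operatorname{F}(T)=\mathbf{g_2}$, and since $\mathbf{g_1}=\mathbf{g_2}/2$ one has $2\mathbf{g_1}=\mathbf{g_2}=\operatorname{F}(T)\in\operatorname{H}(T)$, which forces $\mathbf{g_1}\in\operatorname{H}(T)$. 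Otherwise $2\mathbf{g_1}\neq\mathbf{g_2}$ and the failure means $\mathbf{g_2}$ is not a multiple of $\mathbf{s}$; here I build the seed
$$ S_0=\{k\mathbf{s}:k\in\mathbb{N}_0\}\cup\{\mathbf{x}\in\mathbb{N}_0^d:\mathbf{x}\not\leq\mathbf{g_2}\}. $$
One checks that $S_0$ is a GNS, that $\mathbf{s}\in S_0$, and that $\mathbf{g_2}\notin S_0$ precisely because $\mathbf{g_2}$ is not a multiple of $\mathbf{s}$ (no sum using a term $\not\leq\mathbf{g_2}$ can equal $\mathbf{g_2}$), so $S_0\in\mathcal{F}(\mathbf{g_2}+\mathbf{1};\mathbf{g_2})$. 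Enlarging $S_0$ to a maximal element $T$ of $\mathcal{F}(\mathbf{g_2}+\mathbf{1};\mathbf{g_2})$, Proposition~\ref{prop irreducible MF} again gives that $T$ is irreducible with $\operatorname{F}(T)=\mathbf{g_2}$. As $\mathbf{s}\in S_0\subseteq T$, while $2\mathbf{g_1}\neq\mathbf{g_2}$ and $2\mathbf{s}\neq\mathbf{g_2}$ (else $\mathbf{s}=\mathbf{g_1}$), applying Lemma~\ref{Lema soma Frobenius} to $\mathbf{g_2}=\mathbf{g_1}+\mathbf{s}$ forces $\mathbf{g_1}\in\operatorname{H}(T)$. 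In either case $\operatorname{EH}(T)=\{\mathbf{g_2}\}$ with $\mathbf{g_1},\mathbf{g_2}\in\operatorname{H}(T)$, so Proposition~\ref{prop EH MF} yields that $T\in\mathcal{MF}(\mathbf{g_2}+\mathbf{1};\mathbf{g_1},\mathbf{g_2})$ is irreducible, as required.

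The main obstacle is the converse: one must realize $\mathbf{g_1}$ as a gap of an irreducible GNS with the prescribed Frobenius element $\mathbf{g_2}$. The crucial device is the seed $S_0$, engineered to contain $\mathbf{s}$ while keeping $\mathbf{g_2}$ out, which is possible exactly when $\mathbf{g_2}$ is not a multiple of $\mathbf{s}$; once $\mathbf{s}$ survives into the maximal (hence, by Proposition~\ref{prop irreducible MF}, irreducible) extension $T$, the complementary pair $\mathbf{g_1}+\mathbf{s}=\mathbf{g_2}$ together with Lemma~\ref{Lema soma Frobenius} automatically expels $\mathbf{g_1}$ from $T$. The degenerate case $2\mathbf{g_1}=\mathbf{g_2}$ is handled separately by the elementary remark that $\mathbf{g_2}/2$ can never lie in a GNS whose Frobenius element is $\mathbf{g_2}$.
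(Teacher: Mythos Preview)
Your proof is correct and follows essentially the same approach as the paper: the $(\Leftarrow)$ direction cites Proposition~\ref{prop ANI}, and for $(\Rightarrow)$ you build the same seed GNS containing $\mathbf{g_2}-\mathbf{g_1}$ but not $\mathbf{g_2}$ (your explicit $S_0$ is precisely the paper's $\bar S=\langle\mathcal{O}(\mathbf{g_2}+\mathbf{1})\cup\{\mathbf{g_2}-\mathbf{g_1}\}\rangle$), enlarge it to an irreducible $T\in\mathcal{MF}(\mathbf{g_2}+\mathbf{1};\mathbf{g_2})$, and deduce $\mathbf{g_1}\in\operatorname{H}(T)$. The only differences are organizational: you argue by contraposition and treat the case $2\mathbf{g_1}=\mathbf{g_2}$ separately, whereas the paper derives $2\mathbf{g_1}\neq\mathbf{g_2}$ directly from $\mathbf{g_1}\in\operatorname{EH}(S)$; and you invoke Lemma~\ref{Lema soma Frobenius} and Proposition~\ref{prop EH MF} where the paper uses the more elementary closure observation that $\mathbf{g_1}+(\mathbf{g_2}-\mathbf{g_1})=\mathbf{g_2}\notin T$ forces $\mathbf{g_1}\notin T$.
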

\begin{proof}
$(\Rightarrow)$ Let $S \in \mathcal{M}\mathcal{F}(\mathbf{g_2 + 1};\mathbf{g_1}, \mathbf{g_2})$. Thus, $\mathbf{g_1}, \mathbf{g_2} \in H(S)$ and $F(S)= \mathbf{g_2} \in EH(S)$. By Propositions \ref{especial irredutivel} and \ref{prop EH MF}, we conclude that $|\operatorname{EH}(S)|=2$, since $S$ is non-irreducible. Suppose that $\operatorname{EH}(S) = \{\mathbf{h}, \mathbf{g_2}\}$ with $\mathbf{h} \neq \mathbf{g_1}$, then $S \subsetneq S \cup \{ \mathbf{h}\}$ and $S \cup \{ \mathbf{h}\} \in \mathcal{F}(\mathbf{g_2 + 1};\mathbf{g_1}, \mathbf{g_2})$, which is a contradiction by the maximality of $S$. Therefore, $\operatorname{EH}(S) = \{\mathbf{g_1} , \mathbf{g_2} \}$, and so $2 \mathbf{g_1} \neq \mathbf{g_2}$. Now, suppose that $\mathbf{g_2}$ is not a multiple of $\mathbf{g_2}-\mathbf{g_1}$ and consider  $\mathcal{O}({\mathbf{g_2}+\mathbf{1}})$ the ordinary GNS with corner $\mathbf{g_2}+\mathbf{1}$. Let $\bar{S}=\langle \mathcal{O}({\mathbf{g_2}+\mathbf{1}}) \cup \{\mathbf{g_2}-\mathbf{g_1}\} \rangle$ be the smallest GNS that contains $\mathcal{O}({\mathbf{g_2}+\mathbf{1}})$ and $\{\mathbf{g_2}-\mathbf{g_1}\}$ with respect to the inclusion. Then $\bar{S}$ is an element of $\mathcal{F}(\mathbf{g_2 + 1}; \mathbf{g_2})$. Take $S \in \mathcal{M}\mathcal{F}(\mathbf{g_2 + 1};\mathbf{g_2})$ such that $\bar{S} \subseteq S$. Note that, by Proposition \ref{prop irreducible MF}, $S$ is irreducible. Since $\mathbf{g_2} \notin S$ and $\mathbf{g_2} - \mathbf{g_1} \in S$, we conclude that $\mathbf{g_1} \notin S$, and so $S \in \mathcal{F}(\mathbf{g_2 + 1}; \mathbf{g_1}, \mathbf{g_2})$. Now, suppose that there is $T \in \mathcal{F}(\mathbf{g_2 + 1};\mathbf{g_1}, \mathbf{g_2})$ such that $S \subsetneq T$. Since $T \in \mathcal{F}(\mathbf{g_2 + 1};\mathbf{g_2})$, we have a contradiction with the maximality of $S$ in the set $\mathcal{F}(\mathbf{g_2 + 1};\mathbf{g_2})$. Thus we get $S \in \mathcal{M}\mathcal{F}(\mathbf{g_2 + 1};\mathbf{g_1}, \mathbf{g_2})$ irreducible, a contradiction. Therefore, $\mathbf{g_2}$ is multiple of $\mathbf{g_2}-\mathbf{g_1}$.

$(\Leftarrow)$ It follows from Proposition \ref{prop ANI}.
\end{proof}


\begin{example}
Let $\mathbf{g_2}=(3,3)$ and $\mathbf{g_1}=(2,2)$. In this case, 
$$\mathcal{MF}(\mathbf{g_2 + 1};\mathbf{g_1},\mathbf{g_2})=\{S_1,\ldots,S_{14}\}, \text{where}$$ 

\noindent$
S_1=\mathbb{N}_0^2\setminus\{ ( 1, 0 ), ( 1, 1 ), ( 2, 0 ), ( 2, 1 ), ( 2, 2 ), ( 3, 0 ), ( 3, 1 ), ( 3, 2 ), ( 3, 3 ) \} \\
S_2=\mathbb{N}_0^2\setminus\{ ( 0, 1 ), ( 1, 0 ), ( 1, 1 ), ( 2, 0 ), ( 2, 1 ), ( 2, 2 ), ( 3, 0 ), ( 3, 1 ), ( 3, 3 )  \} \\
S_3=\mathbb{N}_0^2\setminus\{ ( 0, 1 ), ( 1, 0 ), ( 1, 1 ), ( 1, 2 ), ( 2, 0 ), ( 2, 2 ), ( 3, 0 ), ( 3, 1 ), ( 3, 3 )  \} \\
S_4=\mathbb{N}_0^2\setminus\{ ( 0, 1 ), ( 0, 3 ), ( 1, 0 ), ( 1, 1 ), ( 2, 0 ), ( 2, 1 ), ( 2, 2 ), ( 3, 1 ), ( 3, 3 )  \} \\
S_5=\mathbb{N}_0^2\setminus\{ ( 0, 1 ), ( 0, 3 ), ( 1, 0 ), ( 1, 1 ), ( 1, 2 ), ( 2, 0 ), ( 2, 2 ), ( 3, 1 ), ( 3, 3 )  \} \\
S_6=\mathbb{N}_0^2\setminus\{ ( 0, 1 ), ( 0, 2 ), ( 1, 0 ), ( 1, 1 ), ( 2, 0 ), ( 2, 1 ), ( 2, 2 ), ( 3, 0 ), ( 3, 3 )  \} \\
S_7=\mathbb{N}_0^2\setminus\{( ( 0, 1 ), ( 0, 2 ), ( 1, 0 ), ( 1, 1 ), ( 1, 3 ), ( 2, 1 ), ( 2, 2 ), ( 3, 0 ), ( 3, 3 )  \} \\
S_8=\mathbb{N}_0^2\setminus\{( ( 0, 1 ), ( 0, 2 ), ( 1, 0 ), ( 1, 1 ), ( 1, 2 ), ( 2, 0 ), ( 2, 2 ), ( 3, 0 ), ( 3, 3 ) \} \\
S_9=\mathbb{N}_0^2\setminus\{( ( 0, 1 ), ( 0, 2 ), ( 1, 0 ), ( 1, 1 ), ( 1, 2 ), ( 1, 3 ), ( 2, 2 ), ( 3, 0 ), ( 3, 3 ) \} \\
S_{10}=\mathbb{N}_0^2\setminus\{( ( 0, 1 ), ( 0, 2 ), ( 0, 3 ), ( 1, 1 ), ( 1, 2 ), ( 1, 3 ), ( 2, 2 ), ( 2, 3 ), ( 3, 3 )  \} \\
S_{11}=\mathbb{N}_0^2\setminus\{( ( 0, 1 ), ( 0, 2 ), ( 0, 3 ), ( 1, 0 ), ( 1, 1 ), ( 2, 0 ), ( 2, 1 ), ( 2, 2 ), ( 3, 3 ) \} \\
S_{12}=\mathbb{N}_0^2\setminus\{( ( 0, 1 ), ( 0, 2 ), ( 0, 3 ), ( 1, 0 ), ( 1, 1 ), ( 1, 3 ), ( 2, 1 ), ( 2, 2 ), ( 3, 3 ) \} \\
S_{13}=\mathbb{N}_0^2\setminus\{( ( 0, 1 ), ( 0, 2 ), ( 0, 3 ), ( 1, 0 ), ( 1, 1 ), ( 1, 2 ), ( 2, 0 ), ( 2, 2 ), ( 3, 3 )  \} \\
S_{14}=\mathbb{N}_0^2\setminus\{( ( 0, 1 ), ( 0, 2 ), ( 0, 3 ), ( 1, 0 ), ( 1, 1 ), ( 1, 2 ), ( 1, 3 ), ( 2, 2 ), ( 3, 3 ) \}
$

By Theorem \ref{teo ANI}, we conclude that $S_i$ is non-irreducible, for all $i = 1,\ldots,14$.
\end{example}

 For a GNS $S \subseteq \mathbb{N}_0^d$ and $\mathbf{g_1}, \mathbf{g_2} \in \mathbb{N}_0^d$, define $$\tilde{\mathcal{M}}_{\mathbf{g_1}, \mathbf{g_2}}(S):=Maximals\{\mathbf{x}\in \operatorname{H}(S):2\mathbf{x}\in S, \mathbf{g_i}-\mathbf{x}\notin S, \mbox{ for }i=1,2\}.$$

\begin{lemma} \label{lemma h maximals}
Let $\mathbf{g_1},\mathbf{g_2}\in \mathbb{N}_0^d$ and $S\in \mathcal{F}(\mathbf{g_2 + 1};\mathbf{g_1}, \mathbf{g_2})$. If $\displaystyle \mathbf{h}\in \tilde{\mathcal{M}}_{\mathbf{g_1}, \mathbf{g_2}}(S)$,
then $S\cup\{\mathbf{h}\}\in \mathcal{F}(\mathbf{g_2 + 1};\mathbf{g_1}, \mathbf{g_2})$. Moreover, $S\in \mathcal{M}\mathcal{F}(\mathbf{g_2 + 1};\mathbf{g_1}, \mathbf{g_2})$ if and only if $\displaystyle  \tilde{\mathcal{M}}_{\mathbf{g_1}, \mathbf{g_2}}(S)= \emptyset$.
\end{lemma}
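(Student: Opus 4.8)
The plan is to first establish the opening implication---that adjoining any $\mathbf{h}\in \tilde{\mathcal{M}}_{\mathbf{g_1},\mathbf{g_2}}(S)$ yields another member of $\mathcal{F}(\mathbf{g_2 + 1};\mathbf{g_1},\mathbf{g_2})$---and then to read off the ``moreover'' equivalence as the two contrapositives of this fact, combined with Lemma~\ref{lemma maximals}. Membership of $S\cup\{\mathbf{h}\}$ in $\mathcal{F}(\mathbf{g_2 + 1};\mathbf{g_1},\mathbf{g_2})$ splits into three checks: (i) $S\cup\{\mathbf{h}\}$ is a GNS; (ii) its corner element is still $\mathbf{g_2}+\mathbf{1}$; and (iii) both $\mathbf{g_1}$ and $\mathbf{g_2}$ remain gaps. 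Throughout I adopt the standing convention that an expression $\mathbf{g_i}-\mathbf{x}$ having a negative coordinate is simply not an element of $S$.

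For (i), since $2\mathbf{h}\in S$ by the definition of $\tilde{\mathcal{M}}_{\mathbf{g_1},\mathbf{g_2}}(S)$, it suffices by Proposition~\ref{GNS special} to verify $\mathbf{h}\in \operatorname{PF}(S)$, and this is precisely where the maximality of $\mathbf{h}$ enters. I would argue by contradiction: if $\mathbf{h}+\mathbf{s}\in \operatorname{H}(S)$ for some $\mathbf{s}\in S^*$, then I claim $\mathbf{h}+\mathbf{s}$ lies in the set whose maximal elements constitute $\tilde{\mathcal{M}}_{\mathbf{g_1},\mathbf{g_2}}(S)$. Indeed $2(\mathbf{h}+\mathbf{s})=2\mathbf{h}+2\mathbf{s}\in S$, and if $\mathbf{g_i}-(\mathbf{h}+\mathbf{s})\in S$ then adding $\mathbf{s}\in S$ would force $\mathbf{g_i}-\mathbf{h}=(\mathbf{g_i}-\mathbf{h}-\mathbf{s})+\mathbf{s}\in S$, contradicting $\mathbf{g_i}-\mathbf{h}\notin S$; hence $\mathbf{g_i}-(\mathbf{h}+\mathbf{s})\notin S$ for $i=1,2$. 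Since $\mathbf{h}+\mathbf{s}>\mathbf{h}$, this contradicts the maximality of $\mathbf{h}$, so $\mathbf{h}\in \operatorname{PF}(S)$ and therefore $\mathbf{h}\in \operatorname{EH}(S)$. For (iii), the condition $\mathbf{g_i}-\mathbf{h}\notin S$ together with $\mathbf{0}\in S$ forces $\mathbf{h}\neq \mathbf{g_1},\mathbf{g_2}$, so these remain gaps after the extension. For (ii), since $\mathbf{g_2}=\mathbf{c}(S)-\mathbf{1}\in \operatorname{H}(S)$ the GNS $S$ is Frobenius with $\operatorname{F}(S)=\mathbf{g_2}$; as every gap is $\leq \mathbf{g_2}$ and $\mathbf{h}\neq \mathbf{g_2}$, we get $\mathbf{h}<\mathbf{g_2}\in \operatorname{H}(S)$, whence Proposition~\ref{c-especial dois elementos} gives $\mathbf{h}\in \operatorname{CEH}(S)$, i.e. the corner element is preserved. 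This settles the first assertion.

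For the equivalence I would prove both directions contrapositively. If $\tilde{\mathcal{M}}_{\mathbf{g_1},\mathbf{g_2}}(S)\neq\emptyset$, pick $\mathbf{h}$ in it; the first assertion gives $S\subsetneq S\cup\{\mathbf{h}\}\in \mathcal{F}(\mathbf{g_2 + 1};\mathbf{g_1},\mathbf{g_2})$, so $S$ is not maximal. Conversely, if $S\notin \mathcal{M}\mathcal{F}(\mathbf{g_2 + 1};\mathbf{g_1},\mathbf{g_2})$, choose $T\in \mathcal{F}(\mathbf{g_2 + 1};\mathbf{g_1},\mathbf{g_2})$ with $S\subsetneq T$ and take $\mathbf{x}\in Maximals(T\setminus S)$. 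Lemma~\ref{lemma maximals} yields $\mathbf{x}\in \operatorname{EH}(S)$, so $\mathbf{x}\in \operatorname{H}(S)$ and $2\mathbf{x}\in S$; moreover, if $\mathbf{g_i}-\mathbf{x}\in S\subseteq T$ then, since $\mathbf{x}\in T$, we would obtain $\mathbf{g_i}\in T$, contradicting $\mathbf{g_i}\in \operatorname{H}(T)$. Hence $\mathbf{x}$ belongs to the defining set, which is therefore a nonempty finite subset of $\operatorname{H}(S)$ and admits a maximal element, so $\tilde{\mathcal{M}}_{\mathbf{g_1},\mathbf{g_2}}(S)\neq\emptyset$.

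I expect the main obstacle to be the membership check (i), specifically the verification that $\mathbf{h}\in \operatorname{PF}(S)$: the key insight is that the three defining conditions of $\tilde{\mathcal{M}}_{\mathbf{g_1},\mathbf{g_2}}(S)$ are stable under adding any $\mathbf{s}\in S^*$, which is exactly what lets the maximality of $\mathbf{h}$ force $\mathbf{h}+\mathbf{s}\in S$. The only delicate bookkeeping is the sign convention for $\mathbf{g_i}-\mathbf{x}$, which must be handled uniformly so that the cancellation $\mathbf{g_i}-\mathbf{h}=(\mathbf{g_i}-\mathbf{h}-\mathbf{s})+\mathbf{s}$ is invoked precisely when the subtracted vector lies in $\mathbb{N}_0^d$, the negative-coordinate case making the condition hold vacuously. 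Everything else reduces directly to the cited results.
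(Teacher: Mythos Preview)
Your proof is correct and follows essentially the same approach as the paper's. The argument for the first assertion is identical in substance: show $\mathbf{h}\in\operatorname{EH}(S)$ by checking that the three defining conditions of the set underlying $\tilde{\mathcal{M}}_{\mathbf{g_1},\mathbf{g_2}}(S)$ are stable under adding $\mathbf{s}\in S^*$, then note $\mathbf{h}\neq\mathbf{g_1},\mathbf{g_2}$. You are more explicit than the paper about why the corner is preserved (invoking Proposition~\ref{c-especial dois elementos}), which the paper leaves implicit. For the converse in the ``moreover'' part, the paper argues the direct implication by picking $\mathbf{x}\in S'\setminus S$ with $2\mathbf{x}\in S$ and using the generated semigroup $\langle S,\mathbf{x}\rangle$ to force $\mathbf{g_i}\in S'$, whereas you argue the contrapositive via Lemma~\ref{lemma maximals}; these are equivalent routes to the same conclusion.
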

\begin{proof}
Note that, if there exist an element $\mathbf{h}\in \tilde{\mathcal{M}}_{\mathbf{g_1}, \mathbf{g_2}}(S)$, then $\mathbf{h}\neq \mathbf{g_i}$ for any $i\in\{1,2\}$ and, $2\mathbf{h}\in S$. Let $\mathbf{s}\in S\setminus\{0\}$ and suppose that $\mathbf{h}+\mathbf{s}\not\in S$. So, we have $\mathbf{h}+\mathbf{s}\in \operatorname{H}(S)$ and $\mathbf{h}<\mathbf{h}+\mathbf{s}$. Its clear that $2(\mathbf{h}+\mathbf{s})\in S$. If $\mathbf{g_i}-(\mathbf{h}+\mathbf{s})\in S$, then $\mathbf{g_i}-\mathbf{h}\in S$, contradiction. Thus $\mathbf{g_i}-(\mathbf{h}+\mathbf{s})\not\in S$, and then it follows that $\mathbf{h}+\mathbf{s}\in  \tilde{\mathcal{M}}_{\mathbf{g_1}, \mathbf{g_2}}(S)$, a contradiction with the maximality of $\mathbf{h}$. Therefore, $\mathbf{h}\in \operatorname{EH}(S)$ and we have $S\cup\{\mathbf{h}\}\in \mathcal{F}(\mathbf{g_2 + 1}; \mathbf{g_1},\mathbf{g_2})$. 

To prove the last part of the lemma, first let $S\in \mathcal{M}\mathcal{F}(\mathbf{g_2 + 1};\mathbf{g_1}, \mathbf{g_2})$ and suppose that there is $\mathbf{h}\in  \tilde{\mathcal{M}}_{\mathbf{g_1}, \mathbf{g_2}}(S)$. Therefore, $S \subsetneq S\cup\{\mathbf{h}\}$, and we have a contradiction, since $S\cup\{\mathbf{h}\} \in \mathcal{F}(\mathbf{g_2 + 1}; \mathbf{g_1},\mathbf{g_2})$.

Now, let $ \tilde{\mathcal{M}}_{\mathbf{g_1}, \mathbf{g_2}}(S) = \emptyset$. Suppose that $S \subsetneq S'$, where $S'$ is a GNS. Let $\mathbf{x}\in S'\setminus S$ be such that $2\mathbf{x}\in S$. Let $\langle S,\mathbf{x} \rangle$ be the smallest GNS that contain $S$ and $\{ \negx \}$ with respect to inclusion. Then $S\subsetneq \langle S,\mathbf{x} \rangle \subseteq S'$ and, by hypothesis, there exist $i\in\{1,2\}$ such that $\mathbf{g_i}-\mathbf{x}\in S$. So, $\mathbf{g_i}\in \langle S,\mathbf{x} \rangle$, which leads to $\mathbf{g_i}\in S'$, that is, $S' \notin \mathcal{F}(\mathbf{g_2 + 1}; \mathbf{g_1}, \mathbf{g_2})$. Therefore, we have that $S\in \mathcal{M}\mathcal{F}(\mathbf{g_2 + 1};\mathbf{g_1}, \mathbf{g_2})$, and the result follows.
\end{proof}

For a GNS $S \subseteq \mathbb{N}_0^d$ and $\negg \in \mathbb{N}_0^d$, define $$\mathcal{M}_{\negg}(S):=Maximals\{\mathbf{x}\in \operatorname{H}(S):2\mathbf{x}\neq \mathbf{g}, \mathbf{g}-\mathbf{x}\notin S\}.$$

\begin{remark} \label{remark 2h}
We observe that, if $\mathbf{h} \in \mathcal{M}_{\negg}(S)$, then $2\mathbf{h}>\mathbf{g}$. In fact, note that $\mathbf{h} \in \mathcal{M}_{\negg}(S)$ implies $\mathbf{g} - \negh \in  \mathcal{M}_{\negg}(S)$. Thus, if $2\mathbf{h}<\mathbf{g}$, then $\negh < \mathbf{g} - \negh \in  \mathcal{M}_{\negg}(S)$, a contradiction with the maximality of $\negh$.
\end{remark}

In the following, we give some results on GNSs $S$ such that $EH(S) = \{\mathbf{g_1} < \mathbf{g_2}\}$.

\begin{lemma}
Let $S$ be a GNS such that $\operatorname{EH}(S)=\{\mathbf{g_1}<\mathbf{g_2}\}$. Then $$\mathcal{M}_{\negg_2}(S)= \{\mathbf{g_1}\}.$$
\end{lemma}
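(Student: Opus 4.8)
The plan is to prove both inclusions at once by showing that \emph{every} maximal element of the set
$A=\{\mathbf{x}\in \operatorname{H}(S):2\mathbf{x}\neq \mathbf{g_2},\ \mathbf{g_2}-\mathbf{x}\notin S\}$
defining $\mathcal{M}_{\negg_2}(S)$ is forced to be a special gap, and then to use $\operatorname{EH}(S)=\{\mathbf{g_1},\mathbf{g_2}\}$ to identify it with $\mathbf{g_1}$. First I would record the ambient data. Since $\mathbf{g_2}$ is the unique maximal element of $\operatorname{EH}(S)=\{\mathbf{g_1}<\mathbf{g_2}\}$, Corollary~\ref{lemma auxiliar} gives that $S$ is Frobenius with $\operatorname{F}(S)=\mathbf{g_2}$, and hence $\operatorname{lub}(\operatorname{H}(S))=\mathbf{g_2}$ by Proposition~\ref{lub}; in particular every gap satisfies $\mathbf{x}\le \mathbf{g_2}$. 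I would also check directly that $\mathbf{g_1}\in A$: the membership $2\mathbf{g_1}\in S$ (from $\mathbf{g_1}\in \operatorname{EH}(S)$) yields $2\mathbf{g_1}\neq \mathbf{g_2}$ because $\mathbf{g_2}\notin S$, and if $\mathbf{g_2}-\mathbf{g_1}\in S^*$ then $\mathbf{g_1}\in \operatorname{PF}(S)$ would force $\mathbf{g_2}=\mathbf{g_1}+(\mathbf{g_2}-\mathbf{g_1})\in S$, a contradiction. Thus $A\neq\emptyset$, so $\mathcal{M}_{\negg_2}(S)$ is a nonempty subset of the finite set $\operatorname{H}(S)$.

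Next I would take an arbitrary $\mathbf{h}\in \mathcal{M}_{\negg_2}(S)$ and argue $\mathbf{h}\in \operatorname{EH}(S)$. For $2\mathbf{h}\in S$: Remark~\ref{remark 2h} yields $2\mathbf{h}\not<\mathbf{g_2}$, so if $2\mathbf{h}$ were a gap it would satisfy $2\mathbf{h}\le \mathbf{g_2}$ together with $2\mathbf{h}\neq \mathbf{g_2}$, i.e.\ $2\mathbf{h}<\mathbf{g_2}$, a contradiction; hence $2\mathbf{h}\in S$. For $\mathbf{h}\in \operatorname{PF}(S)$, I would suppose to the contrary that $\mathbf{h}+\mathbf{s}\in \operatorname{H}(S)$ for some $\mathbf{s}\in S^*$, and set $\mathbf{h}'=\mathbf{h}+\mathbf{s}>\mathbf{h}$. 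One checks $\mathbf{g_2}-\mathbf{h}'\notin S$, for otherwise adding $\mathbf{s}$ would give $\mathbf{g_2}-\mathbf{h}=(\mathbf{g_2}-\mathbf{h}')+\mathbf{s}\in S$, contradicting $\mathbf{h}\in A$. Therefore $\mathbf{h}'\in A$ as soon as $2\mathbf{h}'\neq \mathbf{g_2}$, which would violate the maximality of $\mathbf{h}$.

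The main obstacle is precisely the borderline possibility $2\mathbf{h}'=\mathbf{g_2}$ (equivalently $\mathbf{h}'=\mathbf{g_2}/2$), which keeps $\mathbf{h}'$ out of $A$ and blocks the contradiction. I would dispose of it using the structure already obtained: $2\mathbf{h}'=\mathbf{g_2}$ forces $\mathbf{h}<\mathbf{g_2}/2$, hence $2\mathbf{h}<\mathbf{g_2}$, contradicting the inequality $2\mathbf{h}\not<\mathbf{g_2}$ supplied by Remark~\ref{remark 2h}. Thus the borderline case cannot occur, so $\mathbf{h}\in \operatorname{PF}(S)$ and consequently $\mathbf{h}\in \operatorname{EH}(S)=\{\mathbf{g_1},\mathbf{g_2}\}$. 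Since $\mathbf{g_2}-\mathbf{h}\notin S$ forces $\mathbf{h}\neq \mathbf{g_2}$ (as $\mathbf{g_2}-\mathbf{g_2}=\mathbf{0}\in S$), I conclude $\mathbf{h}=\mathbf{g_1}$. Combined with $\mathbf{g_1}\in A$ and the nonemptiness of $\mathcal{M}_{\negg_2}(S)$, this gives $\mathcal{M}_{\negg_2}(S)=\{\mathbf{g_1}\}$, as claimed.
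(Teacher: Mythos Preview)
Your argument is correct and follows essentially the same route as the paper's proof: establish nonemptiness of the underlying set, take an arbitrary maximal element $\mathbf{h}$, use Remark~\ref{remark 2h} to get $2\mathbf{h}\in S$, show $\mathbf{h}+\mathbf{s}\in S$ for all $\mathbf{s}\in S^{*}$ by contradiction via maximality, and conclude $\mathbf{h}\in\operatorname{EH}(S)\setminus\{\mathbf{g_2}\}=\{\mathbf{g_1}\}$. The only differences are cosmetic: the paper obtains nonemptiness indirectly from non-irreducibility (Proposition~\ref{especial irredutivel}) rather than by checking $\mathbf{g_1}\in A$, and it asserts $2(\mathbf{h}+\mathbf{s})\neq\mathbf{g_2}$ in one line where you spell out the borderline case; your added care with the partial order (writing $2\mathbf{h}\not<\mathbf{g_2}$ rather than $2\mathbf{h}>\mathbf{g_2}$) is a welcome precision in dimension $d>1$.
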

\begin{proof}
Since $|\operatorname{EH}(S)|=2$, Proposition \ref{especial irredutivel}, we have that $S$ is not irreducible, and we can conclude that the set $\mathcal{M}_{\negg_2}(S)$ is not empty. Let $\negh \in \mathcal{M}_{\negg_2}(S)$. By Corollary \ref{lemma auxiliar}, $\mathbf{g_2} = F(S)$. So, by Remark \ref{remark 2h}, we have $2 \negh \in S$. Suppose that there is $\negs \in S \setminus \{0\}$ such that $\negh + \negs \notin S$. Note that, $2(\negh + \negs) \neq \mathbf{g_2}$. Thus, by maximality of $\negh$, we have that $\mathbf{g_2} - (\negh + \negs) \in S$, and so $\mathbf{g_2}-\mathbf{h}\in S$, a contradiction. Therefore, $\mathbf{h} \in \operatorname{EH}(S)$, and the result follows, since $\operatorname{EH}(S)=\{\mathbf{g_1}<\mathbf{g_2}\}$ and $\mathbf{g_2} \notin \mathcal{M}_{\negg_2}(S)$.
\end{proof}

So, by Remark \ref{remark 2h}, we obtain the following consequence.

\begin{lemma} \label{lemma 10 rosales}
Let $S$ be a GNS such that $\operatorname{EH}(S)=\{\mathbf{g_1}<\mathbf{g_2}\}$. Then $2\mathbf{g_1}>\mathbf{g_2}$.
\end{lemma}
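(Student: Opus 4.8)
The plan is to derive this statement as an immediate consequence of the preceding lemma together with Remark~\ref{remark 2h}. The preceding lemma already established that, under the hypothesis $\operatorname{EH}(S)=\{\mathbf{g_1}<\mathbf{g_2}\}$, one has $\mathcal{M}_{\negg_2}(S)=\{\mathbf{g_1}\}$. In particular $\mathbf{g_1}\in \mathcal{M}_{\negg_2}(S)$, so it suffices to apply Remark~\ref{remark 2h} with $\mathbf{g}=\mathbf{g_2}$ and $\mathbf{h}=\mathbf{g_1}$ to conclude $2\mathbf{g_1}>\mathbf{g_2}$. Since the heavy lifting has already been carried out in the previous two results, I expect essentially no obstacle here: the statement is a one-line corollary.

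Should one prefer a self-contained argument that does not invoke Remark~\ref{remark 2h} as a black box, I would instead reason directly as follows. Set $A=\{\mathbf{x}\in \operatorname{H}(S): 2\mathbf{x}\neq \mathbf{g_2},\ \mathbf{g_2}-\mathbf{x}\notin S\}$, so that $\mathcal{M}_{\negg_2}(S)=Maximals(A)=\{\mathbf{g_1}\}$. Because $\mathbf{g_1}<\mathbf{g_2}$, the element $\mathbf{g_2}-\mathbf{g_1}$ lies in $\mathbb{N}_0^d$, and I would check that it belongs to $A$: indeed $\mathbf{g_2}-\mathbf{g_1}\in \operatorname{H}(S)$ since $\mathbf{g_1}\in \mathcal{M}_{\negg_2}(S)$ forces $\mathbf{g_2}-\mathbf{g_1}\notin S$; moreover $2(\mathbf{g_2}-\mathbf{g_1})\neq \mathbf{g_2}$ is equivalent to $2\mathbf{g_1}\neq \mathbf{g_2}$, which holds by the preceding lemma; and $\mathbf{g_2}-(\mathbf{g_2}-\mathbf{g_1})=\mathbf{g_1}\notin S$ since $\mathbf{g_1}\in \operatorname{H}(S)$.

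Then I would use that $A$ is a finite poset with a unique maximal element $\mathbf{g_1}$: in a finite poset every element lies below some maximal element, so uniqueness forces $\mathbf{y}\leq \mathbf{g_1}$ for every $\mathbf{y}\in A$. Applying this to $\mathbf{y}=\mathbf{g_2}-\mathbf{g_1}\in A$ gives $\mathbf{g_2}-\mathbf{g_1}\leq \mathbf{g_1}$, and since $2\mathbf{g_1}\neq \mathbf{g_2}$ we have $\mathbf{g_2}-\mathbf{g_1}\neq \mathbf{g_1}$, whence $\mathbf{g_2}-\mathbf{g_1}<\mathbf{g_1}$ and therefore $2\mathbf{g_1}>\mathbf{g_2}$. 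The only point requiring a little care is precisely this last step: one must not confuse the failure of $2\mathbf{g_1}\leq \mathbf{g_2}$ with $2\mathbf{g_1}>\mathbf{g_2}$, since the order on $\mathbb{N}_0^d$ is only partial. The unique-maximal property of $A$ is exactly what removes any possible incomparability between $\mathbf{g_2}-\mathbf{g_1}$ and $\mathbf{g_1}$ and secures the strict inequality.
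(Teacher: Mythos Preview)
Your proposal is correct and matches the paper's approach exactly: the paper also derives Lemma~\ref{lemma 10 rosales} as an immediate consequence of the preceding lemma (which gives $\mathcal{M}_{\negg_2}(S)=\{\mathbf{g_1}\}$) together with Remark~\ref{remark 2h}. Your optional self-contained argument is likewise correct and, in fact, handles the partial-order subtlety (comparability of $2\mathbf{g_1}$ and $\mathbf{g_2}$) more carefully than the paper's own justification in Remark~\ref{remark 2h}.
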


In the follow, we provide the conditions on $\mathbf{g_1}$ and $\mathbf{g_2}$ for the elements in $\mathcal{MF}(\mathbf{g_2 + 1};\mathbf{g_1},\mathbf{g_2})$ have the set of special gaps equal to $\{\mathbf{g_1},\mathbf{g_2}\}$. 

\begin{proposition} \label{prop 12 rosales}
Let $\mathbf{g_1},\mathbf{g_2}\in\mathbb{N}_0^d$ be such that $\mathbf{g_2}<2\mathbf{g_1}<2\mathbf{g_2}$. The following statements are equivalent.
\begin{enumerate}
	\item $S$ is a GNS with $\operatorname{EH}(S)=\{\mathbf{g_1},\mathbf{g_2}\}$.
	\item $S\in\mathcal{MF}(\mathbf{g_2 + 1};\mathbf{g_1},\mathbf{g_2})$ and $\mathbf{g_2}-\mathbf{g_1}\not\in S$.
\end{enumerate}
\end{proposition}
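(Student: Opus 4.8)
The plan is to prove the two implications by using Proposition~\ref{prop EH MF} to convert membership in $\mathcal{MF}(\mathbf{g_2 + 1};\negg_1,\negg_2)$ into the containment $\operatorname{EH}(S)\subseteq\{\negg_1,\negg_2\}$, and then to determine precisely which of $\negg_1,\negg_2$ actually belong to $\operatorname{EH}(S)$. Throughout, the inequality $2\negg_1<2\negg_2$ supplies $\negg_1<\negg_2$, while $\negg_2<2\negg_1$ will only be needed (in the guise $2\negg_1\neq\negg_2$) in the harder direction.

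For $(1)\Rightarrow(2)$, I would start from $\operatorname{EH}(S)=\{\negg_1,\negg_2\}$. Since $\negg_1<\negg_2$ and both are gaps, Proposition~\ref{prop EH MF} gives at once $S\in\mathcal{MF}(\mathbf{g_2 + 1};\negg_1,\negg_2)$. To obtain $\negg_2-\negg_1\notin S$ I argue by contradiction: if $\negg_2-\negg_1\in S$ then, as $\negg_1<\negg_2$, it is a nonzero element of $S$, and because $\negg_1\in\operatorname{EH}(S)\subseteq\operatorname{PF}(S)$ the defining property of pseudo-Frobenius elements yields $\negg_1+(\negg_2-\negg_1)=\negg_2\in S$, contradicting $\negg_2\in\operatorname{H}(S)$. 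This direction is essentially formal and uses none of the finer hypotheses on $\negg_1,\negg_2$.

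For $(2)\Rightarrow(1)$, from $S\in\mathcal{MF}(\mathbf{g_2 + 1};\negg_1,\negg_2)$ I first read off that $S$ has corner element $\negg_2+\neg1$ with $\negg_1,\negg_2\in\operatorname{H}(S)$, so by Proposition~\ref{prop EH MF} we have $\operatorname{EH}(S)\subseteq\{\negg_1,\negg_2\}$; moreover $\negc(S)-\neg1=\negg_2\in\operatorname{H}(S)$ shows $S$ is Frobenius with $\operatorname{F}(S)=\negg_2$, whence $\negg_2\in\operatorname{EH}(S)$ by Lemma~\ref{maximals especial gap}. The task then reduces to showing $\negg_1\in\operatorname{EH}(S)$ as well. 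I would do this by contradiction: were $\operatorname{EH}(S)=\{\negg_2\}$, then $S$ would be irreducible by Proposition~\ref{especial irredutivel}, and I would apply Lemma~\ref{Lema soma Frobenius} to the decomposition $\negg_2=\negg_1+(\negg_2-\negg_1)$ of the Frobenius element. Here the hypothesis $2\negg_1\neq\negg_2$ guarantees that neither summand equals $\operatorname{F}(S)/2$, so the lemma applies in both the odd-component and the all-even case; since $\negg_1\in\operatorname{H}(S)$, irreducibility forces $\negg_2-\negg_1\in S$, contradicting $\negg_2-\negg_1\notin S$. Hence $\negg_1\in\operatorname{EH}(S)$ and therefore $\operatorname{EH}(S)=\{\negg_1,\negg_2\}$.

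The main obstacle is exactly this last contradiction: eliminating the degenerate possibility $\operatorname{EH}(S)=\{\negg_2\}$ is where the extra arithmetic on $\negg_1,\negg_2$ is indispensable, both through the parity dichotomy of Lemma~\ref{Lema soma Frobenius} and through the condition $2\negg_1\neq\negg_2$ that keeps $\negg_1$ off the half-Frobenius element. Everything else is bookkeeping built on Proposition~\ref{prop EH MF}.
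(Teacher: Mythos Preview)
Your proof is correct. The direction $(1)\Rightarrow(2)$ matches the paper verbatim. For $(2)\Rightarrow(1)$ you take a genuinely different route: the paper verifies $\negg_1\in\operatorname{EH}(S)$ \emph{directly}, first using the full strength of $2\negg_1>\negg_2=\operatorname{F}(S)$ to get $2\negg_1\in S$, and then invoking Lemma~\ref{lemma h maximals} (the characterization $S\in\mathcal{MF}(\negg_2+\neg1;\negg_1,\negg_2)\Leftrightarrow\tilde{\mathcal{M}}_{\negg_1,\negg_2}(S)=\emptyset$) to show $\negg_1+S^*\subseteq S$: indeed, if $\negg_1+\negs\in\operatorname{H}(S)$ for some $\negs\in S^*$ then emptiness of $\tilde{\mathcal{M}}_{\negg_1,\negg_2}(S)$ forces $\negg_2-(\negg_1+\negs)\in S$, whence $\negg_2-\negg_1\in S$. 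You instead argue indirectly via irreducibility and Lemma~\ref{Lema soma Frobenius}. Your approach has the virtue of using only the weaker hypothesis $2\negg_1\neq\negg_2$ in this direction and of bypassing Lemma~\ref{lemma h maximals} entirely; the paper's approach is more hands-on, checking the two membership conditions for $\operatorname{EH}(S)$ explicitly and tying the result to the machinery developed immediately before it.
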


\begin{proof}
$(1) \Rightarrow (2)$ By Proposition \ref{prop EH MF}, $S\in\mathcal{MF}(\mathbf{g_2 + 1};\mathbf{g_1},\mathbf{g_2})$ . If $\mathbf{g_2} - \mathbf{g_1} \in S$, then we get $\mathbf{g_2} = \mathbf{g_1} +(\mathbf{g_2} - \mathbf{g_1}) \in S$, since $\mathbf{g_1} \in \operatorname{EH}(S)$. Therefore, $\mathbf{g_2} - \mathbf{g_1} \notin S$.

$(2) \Rightarrow (1)$  By Proposition \ref{prop EH MF}, $\operatorname{EH}(S)\subseteq \{\mathbf{g_1},\mathbf{g_2}\}$. By Corollary \ref{lemma auxiliar}, we have that $\mathbf{g_2} = F(S) \in \operatorname{EH}(S)$. We claim that $\mathbf{g_1}$ is also in $\operatorname{EH}(S)$. In fact, as $2 \mathbf{g_1} > \mathbf{g_2}$, then $2 \mathbf{g_1} \in S$. If there is $\negs \in S \setminus \{ 0 \}$ such that $\mathbf{g_1} + \negs \notin S$, then, by Lemma \ref{lemma h maximals}, we have $\mathbf{g_2} - (\mathbf{g_1} + \negs) \in S$, and so $\mathbf{g_2} - \mathbf{g_1} \in S$, a contradiction. Therefore, $\operatorname{EH}(S) = \{ \mathbf{g_1} , \mathbf{g_2}\}$.
\end{proof}

Before to introduce the next results, we observe that if $S$ is a GNS and $\mathbf{t} \in \operatorname{PF}(S)$, then there is a positive integer $k$ such that $k  \mathbf{t} \in \operatorname{EH}(S)$.

\begin{lemma}\label{lemma type}
Let $S$ be a GNS and $\negh \in \operatorname{H}(S) \setminus \operatorname{PF}(S)$. Then $\{\negx \in \operatorname{EH}(S): \negx - \negh \in S\} \neq \emptyset$.
\label{nonempty}
\end{lemma}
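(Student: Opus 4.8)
The plan is to produce the desired special gap in two stages: first locate a pseudo-Frobenius element $\negg_0$ that already satisfies the difference condition $\negg_0-\negh\in S$, and then upgrade it to a special gap using the observation recorded just before the statement (that some positive multiple of any pseudo-Frobenius element lies in $\operatorname{EH}(S)$). To this end I would introduce the auxiliary set
$$G:=\{\negg\in \operatorname{H}(S)\ :\ \negg-\negh\in S\}.$$
Since $\mathbf{0}\in S$, we have $\negh\in G$, so $G$ is nonempty; it is also finite, being a subset of $\operatorname{H}(S)$.

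The key step is to show that every maximal element of $G$ (with respect to $\leq$) belongs to $\operatorname{PF}(S)$. Indeed, if $\negg_0$ is maximal in $G$ and some $\negs\in S^*$ satisfied $\negg_0+\negs\in \operatorname{H}(S)$, then $(\negg_0+\negs)-\negh=(\negg_0-\negh)+\negs\in S$ by closure of $S$ under addition, forcing $\negg_0+\negs\in G$ and contradicting maximality; hence $\negg_0\in \operatorname{PF}(S)$. Now the hypothesis $\negh\notin \operatorname{PF}(S)$ guarantees that $\negh$ is not maximal in $G$, so I may select a maximal element $\negg_0\in G$ with $\negh<\negg_0$. This yields $\negg_0\in \operatorname{PF}(S)$ together with $\negg_0-\negh\in S^*$ (the difference being nonzero since $\negh<\negg_0$).

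It remains to replace $\negg_0$ by a special gap. By the observation preceding the statement, there is a positive integer $k$ with $\negx:=k\negg_0\in \operatorname{EH}(S)$. To verify $\negx-\negh\in S$ I would prove by induction on $m\geq 1$ that $m\negg_0-\negh\in S^*$: the base case is $\negg_0-\negh\in S^*$, and the inductive step follows because $\negg_0\in \operatorname{PF}(S)$ absorbs $m\negg_0-\negh\in S^*$, giving $(m+1)\negg_0-\negh=\negg_0+(m\negg_0-\negh)\in S$, which is nonzero since it dominates $\negg_0-\negh$. Taking $m=k$ shows $k\negg_0-\negh\in S$, so $\negx=k\negg_0$ witnesses that the set in the statement is nonempty.

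The main obstacle is precisely the gap between being pseudo-Frobenius and being a special gap: a maximal element $\negg_0$ of $G$ need not satisfy $2\negg_0\in S$, so one cannot simply declare it to be the sought element of $\operatorname{EH}(S)$. The observation about multiples resolves this, but it introduces the new burden of checking that the difference condition $\negg_0-\negh\in S$ survives the passage from $\negg_0$ to $k\negg_0$; the short induction above, resting on the pseudo-Frobenius property of $\negg_0$, is the device that carries the condition through.
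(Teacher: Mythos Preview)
Your proof is correct. The first stage --- selecting a maximal element $\negg_0$ of the set $G=\{\negg\in\operatorname{H}(S):\negg-\negh\in S\}$ and showing it lies in $\operatorname{PF}(S)$ --- is exactly the paper's argument, just viewed from the gap side rather than the $S^*$ side (the paper chooses a maximal $\negs_1\in S^*$ with $\negh+\negs_1\in\operatorname{H}(S)$; the bijection $\negg\leftrightarrow \negg-\negh$ identifies the two setups and the two maximal elements).

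Where you diverge is the second stage. The paper observes that the very same maximality already forces $2\negg_0\in S$: indeed $2\negg_0-\negh=\negg_0+(\negg_0-\negh)\in S$ because $\negg_0\in\operatorname{PF}(S)$ and $\negg_0-\negh\in S^*$; so if $2\negg_0$ were a gap it would belong to $G$ and strictly dominate $\negg_0$, contradicting maximality. Hence $\negg_0\in\operatorname{EH}(S)$ directly and one may take $k=1$. Your concern that ``a maximal element $\negg_0$ of $G$ need not satisfy $2\negg_0\in S$'' is therefore unfounded in this situation, and the appeal to the multiples observation together with the induction on $m$ --- while correct --- is an avoidable detour. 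What your route buys is independence from that extra maximality check at the cost of invoking an auxiliary fact and an induction; the paper's route is shorter and self-contained.
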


\begin{proof}
Observe that, if $\negh\in \operatorname{H}(S)\setminus \operatorname{PF}(S)$, then there exists $\negs\in S \setminus \{0\}$ such that $\negh+\negs\in \operatorname{H}(S)$. Let $\negs_1 \in S \setminus \{0\}$ be such that $\negh+\negs_1\in \operatorname{H}(S)$ and suppose that there is none $\negs \in S \setminus \{0\}$ such that $\negs_1< \negs$ and $\negh+\negs\in \operatorname{H}(S)$. We claim that $\negx=\negh+\negs_1 \in \operatorname{EH}(S)$. In fact, by the choice of $\negs_1$, we have that $\negx+\negs \in S$, for all $\negs \in S \setminus \{0\}$. Note that, $\negh+2\negs_1\in S$. So, $2\negx = \negh+(\negh+2\negs_1)\in S$ and the result follows.
\end{proof}

\begin{lemma} \label{lemma 17 rosales}
Let $S$ be a GNS with $\operatorname{EH}(S) = \{ \mathbf{g_1} < \mathbf{g_2} \}$. Then either $\mathbf{g_2} = k  (\mathbf{g_2} - \mathbf{g_1})$, for some positive integer $k$, or $2 \mathbf{g_1} - \mathbf{g_2} \in S$.
\end{lemma}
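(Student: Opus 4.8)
The plan is to study the single element $\mathbf{d}:=\mathbf{g_2}-\mathbf{g_1}\in\mathbb{N}_0^d$ and to split the argument according to whether or not $\mathbf{d}\in\operatorname{PF}(S)$. The key point driving the proof is that this dichotomy matches exactly the two alternatives in the conclusion.

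First I would record two facts. Since $\operatorname{EH}(S)\subseteq\operatorname{PF}(S)$, both $\mathbf{g_1}$ and $\mathbf{g_2}$ belong to $\operatorname{PF}(S)$. Moreover $\mathbf{d}\in\operatorname{H}(S)$: as $\mathbf{g_1}<\mathbf{g_2}$ we have $\mathbf{d}\neq\mathbf{0}$, so if $\mathbf{d}\in S$ then $\mathbf{d}\in S^*$, and since $\mathbf{g_1}\in\operatorname{PF}(S)$ this would force $\mathbf{g_2}=\mathbf{g_1}+\mathbf{d}\in S$, contradicting $\mathbf{g_2}\in\operatorname{H}(S)$.

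Suppose first that $\mathbf{d}\in\operatorname{PF}(S)$. Here I would invoke the observation stated just before Lemma~\ref{lemma type}, which gives a positive integer $k$ with $k\mathbf{d}\in\operatorname{EH}(S)=\{\mathbf{g_1},\mathbf{g_2}\}$. If $k\mathbf{d}=\mathbf{g_2}$, then $\mathbf{g_2}=k(\mathbf{g_2}-\mathbf{g_1})$; if instead $k\mathbf{d}=\mathbf{g_1}$, then $\mathbf{g_2}=\mathbf{g_1}+\mathbf{d}=(k+1)\mathbf{d}=(k+1)(\mathbf{g_2}-\mathbf{g_1})$. Either way $\mathbf{g_2}$ is a multiple of $\mathbf{g_2}-\mathbf{g_1}$, which is the first alternative.

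Suppose instead that $\mathbf{d}\in\operatorname{H}(S)\setminus\operatorname{PF}(S)$. Then I would apply Lemma~\ref{lemma type} with $\mathbf{h}=\mathbf{d}$ to produce some $\mathbf{x}\in\operatorname{EH}(S)$ with $\mathbf{x}-\mathbf{d}\in S$. Because $\mathbf{g_2}-\mathbf{d}=\mathbf{g_1}\in\operatorname{H}(S)$, the choice $\mathbf{x}=\mathbf{g_2}$ is ruled out, so $\mathbf{x}=\mathbf{g_1}$ and hence $\mathbf{g_1}-\mathbf{d}=2\mathbf{g_1}-\mathbf{g_2}\in S$, the second alternative. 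I expect the real content to be concentrated in correctly citing the two earlier tools—the observation that every pseudo-Frobenius element has a positive multiple in $\operatorname{EH}(S)$, and Lemma~\ref{lemma type}—while the only bookkeeping subtleties are checking that $\mathbf{d}$ is a genuine gap and that $\mathbf{g_1}\notin S$ eliminates the branch $\mathbf{x}=\mathbf{g_2}$. Lemma~\ref{lemma 10 rosales} ensures $2\mathbf{g_1}>\mathbf{g_2}$, so $2\mathbf{g_1}-\mathbf{g_2}\in\mathbb{N}_0^d$ and the second alternative is well posed, although the case split itself does not otherwise rely on it.
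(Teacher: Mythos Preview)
Your proof is correct and follows essentially the same approach as the paper's: both split on whether $\mathbf{d}=\mathbf{g_2}-\mathbf{g_1}$ lies in $\operatorname{PF}(S)$, invoke Lemma~\ref{lemma type} in the non-pseudo-Frobenius case, and use the observation that every pseudo-Frobenius element has a multiple in $\operatorname{EH}(S)$ in the other case. The only cosmetic difference is that the paper cites Proposition~\ref{prop 12 rosales} to conclude $\mathbf{d}\in\operatorname{H}(S)$, whereas you verify this directly from $\mathbf{g_1}\in\operatorname{PF}(S)$.
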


\begin{proof}
From Proposition \ref{prop 12 rosales} it follows that $\mathbf{g_2} - \mathbf{g_1} \in \operatorname{H}(S)$. Now, we distinguish two cases.

$\bullet$ If $\mathbf{g_2} - \mathbf{g_1} \notin \operatorname{PF}(S)$, then Lemma \ref{lemma type} asserts that either $\mathbf{g_2} - (\mathbf{g_2} - \mathbf{g_1}) \in S$ or $\mathbf{g_1} - (\mathbf{g_2} - \mathbf{g_1}) \in S$. The first condition cannot hold, and thus $2 \mathbf{g_1} - \mathbf{g_2} \in S$.

$\bullet$ If $\mathbf{g_2} - \mathbf{g_1} \in \operatorname{PF}(S)$, then there is a positive integer $k$ such that $k (\mathbf{g_2} - \mathbf{g_1}) \in \operatorname{EH}(S)$. Hence either $\mathbf{g_2} = \alpha  (\mathbf{g_2} - \mathbf{g_1})$ or $\mathbf{g_1} = \beta  (\mathbf{g_2} - \mathbf{g_1})$, for some $\alpha,\beta\in\mathbb{N}$. The proofs follows from the fact that if $\mathbf{g_1} = \beta (\mathbf{g_2} - \mathbf{g_1})$, then $\mathbf{g_2} = (\beta + 1) (\mathbf{g_2} - \mathbf{g_1})$.
\end{proof}






In the next result we have a characterization of GNSs with exactly two comparable special gaps. As consequence we get the necessary and sufficient conditions so that every elements in $\mathcal{MF}(\mathbf{g_2 + 1};\mathbf{g_1},\mathbf{g_2})$ to be irreducible.

\begin{theorem}\label{TmainS5}
Let $\mathbf{g_1}, \mathbf{g_2}\in\mathbb{N}_0^d$ such that $\mathbf{g_1}< \mathbf{g_2}$. Then there exist $S$ a GNS such that $\operatorname{EH}(S)=\{\mathbf{g_1},\mathbf{g_2}\}$ if and only if $\mathbf{g_2}<2\mathbf{g_1}$ and either $\mathbf{g_2}=k (\mathbf{g_2}-\mathbf{g_1})$ or $\mathbf{g_2}$ is not multiple of $2\mathbf{g_1}-\mathbf{g_2}$.
\end{theorem}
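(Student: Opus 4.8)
The plan is to prove both implications through the characterization in Proposition~\ref{prop 12 rosales}, which is available under our hypotheses: since $\mathbf{g_1}<\mathbf{g_2}$ and $\mathbf{g_2}<2\mathbf{g_1}$ together give $\mathbf{g_2}<2\mathbf{g_1}<2\mathbf{g_2}$, a GNS $S$ satisfies $\operatorname{EH}(S)=\{\mathbf{g_1},\mathbf{g_2}\}$ exactly when $S\in\mathcal{MF}(\mathbf{g_2 + 1};\mathbf{g_1},\mathbf{g_2})$ and $\mathbf{g_2}-\mathbf{g_1}\notin S$. Throughout I will use that $\mathbf{g_2}<2\mathbf{g_1}$ makes $\mathbf{w}:=2\mathbf{g_1}-\mathbf{g_2}$ a nonzero element of $\mathbb{N}_0^d$.

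For the forward implication, suppose such an $S$ exists. Lemma~\ref{lemma 10 rosales} gives $\mathbf{g_2}<2\mathbf{g_1}$ at once. To obtain the disjunction I would show its negation is impossible: assuming simultaneously that $\mathbf{g_2}\neq k(\mathbf{g_2}-\mathbf{g_1})$ for every positive integer $k$ and that $\mathbf{g_2}=m\mathbf{w}$ for some positive integer $m$, Lemma~\ref{lemma 17 rosales} forces $\mathbf{w}=2\mathbf{g_1}-\mathbf{g_2}\in S$, whence closure under addition yields $\mathbf{g_2}=m\mathbf{w}\in S$, contradicting $\mathbf{g_2}\in\operatorname{EH}(S)\subseteq\operatorname{H}(S)$. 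Hence either $\mathbf{g_2}=k(\mathbf{g_2}-\mathbf{g_1})$ or $\mathbf{g_2}$ is not a multiple of $2\mathbf{g_1}-\mathbf{g_2}$.

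For the converse I would split according to whether $\mathbf{g_2}$ is a multiple of $\mathbf{g_2}-\mathbf{g_1}$. If $\mathbf{g_2}=k(\mathbf{g_2}-\mathbf{g_1})$, then $\mathbf{g_2}<2\mathbf{g_1}$ gives $2\mathbf{g_1}\neq\mathbf{g_2}$, so Theorem~\ref{teo ANI} guarantees that every element of $\mathcal{MF}(\mathbf{g_2 + 1};\mathbf{g_1},\mathbf{g_2})$ is non-irreducible; this set is nonempty because the ordinary GNS $\mathcal{O}(\mathbf{g_2}+\mathbf{1})$ lies in $\mathcal{F}(\mathbf{g_2 + 1};\mathbf{g_1},\mathbf{g_2})$. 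Choosing any such $S$, Proposition~\ref{prop EH MF} gives $\operatorname{EH}(S)\subseteq\{\mathbf{g_1},\mathbf{g_2}\}$, while non-irreducibility with Proposition~\ref{especial irredutivel} gives $|\operatorname{EH}(S)|\geq 2$; thus $\operatorname{EH}(S)=\{\mathbf{g_1},\mathbf{g_2}\}$.

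In the remaining case $\mathbf{g_2}\neq k(\mathbf{g_2}-\mathbf{g_1})$ for all $k$, the hypothesis tells us $\mathbf{g_2}$ is not a multiple of $\mathbf{w}$, and I would build $S$ explicitly by mimicking the proof of Theorem~\ref{teo ANI}. Set $\bar S=\langle\mathcal{O}(\mathbf{g_2}+\mathbf{1})\cup\{\mathbf{w}\}\rangle=\{\nega+j\mathbf{w}:\nega\in\mathcal{O}(\mathbf{g_2}+\mathbf{1}),\ j\in\mathbb{N}_0\}$. The crucial observation is that any nonzero element of $\bar S$ that is $\leq\mathbf{g_2}$ must be a positive multiple of $\mathbf{w}$, since every nonzero element of $\mathcal{O}(\mathbf{g_2}+\mathbf{1})$ has a coordinate strictly exceeding the corresponding coordinate of $\mathbf{g_2}$. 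From this I would deduce $\mathbf{g_2}\notin\bar S$ (else $\mathbf{g_2}=j\mathbf{w}$) and $\mathbf{g_1}\notin\bar S$ (the equality $\mathbf{g_1}=j\mathbf{w}$ forces $\mathbf{g_2}=(2j-1)\mathbf{w}$), so $\bar S\in\mathcal{F}(\mathbf{g_2 + 1};\mathbf{g_1},\mathbf{g_2})$ with corner $\mathbf{g_2}+\mathbf{1}$. Extending $\bar S$ to a maximal $S\in\mathcal{MF}(\mathbf{g_2 + 1};\mathbf{g_1},\mathbf{g_2})$, I have $\mathbf{w}\in S$ and $\mathbf{g_1}\notin S$; hence $\mathbf{g_2}-\mathbf{g_1}\in S$ would give $\mathbf{w}+(\mathbf{g_2}-\mathbf{g_1})=\mathbf{g_1}\in S$, a contradiction, so $\mathbf{g_2}-\mathbf{g_1}\notin S$ and Proposition~\ref{prop 12 rosales} concludes $\operatorname{EH}(S)=\{\mathbf{g_1},\mathbf{g_2}\}$. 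I expect the main obstacle to be precisely this last case: correctly describing the elements of $\bar S$ below $\mathbf{g_2}$ and converting the hypothesis that $\mathbf{g_2}$ is not a multiple of $2\mathbf{g_1}-\mathbf{g_2}$ into the two membership facts $\mathbf{g_1},\mathbf{g_2}\notin\bar S$, which rests on the arithmetic identity $\mathbf{g_1}=j\mathbf{w}\Rightarrow\mathbf{g_2}=(2j-1)\mathbf{w}$.
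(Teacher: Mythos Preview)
Your proof is correct and follows essentially the same route as the paper: Lemmas~\ref{lemma 10 rosales} and~\ref{lemma 17 rosales} for the forward implication, and the construction $\bar S=\langle\mathcal{O}(\mathbf{g_2}+\mathbf{1})\cup\{2\mathbf{g_1}-\mathbf{g_2}\}\rangle$ together with Proposition~\ref{prop 12 rosales} for the second case of the converse. The only cosmetic difference is in the first case of the converse: the paper picks any $S\in\mathcal{MF}(\mathbf{g_2+1};\mathbf{g_1},\mathbf{g_2})$, observes directly that $\mathbf{g_2}-\mathbf{g_1}\notin S$ (since $\mathbf{g_2}=k(\mathbf{g_2}-\mathbf{g_1})$ would otherwise land in $S$), and applies Proposition~\ref{prop 12 rosales}; you instead route through Theorem~\ref{teo ANI} to get non-irreducibility and then combine Propositions~\ref{especial irredutivel} and~\ref{prop EH MF}. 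Both are valid and amount to the same content.
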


\begin{proof}
$(\Rightarrow)$ Let $S$ be a GNS with  $\operatorname{EH}(S)=\{\mathbf{g_1},\mathbf{g_2}\}$. By Lemma \ref{lemma 10 rosales}, we know that $2 \mathbf{g_1} > \mathbf{g_2}$ and by Lemma \ref{lemma 17 rosales} that either  $\mathbf{g_2} = k \cdot (\mathbf{g_2} - \mathbf{g_1})$, for some positive integer $k$, or $2 \mathbf{g_1} - \mathbf{g_2} \in S$. If this latter condition holds, then $\mathbf{g_2}$ is not multiple of $2\mathbf{g_1}-\mathbf{g_2}$, since $\mathbf{g_2} \notin S$.

$(\Leftarrow)$ First, assume that $\mathbf{g_2}=k\cdot (\mathbf{g_2}-\mathbf{g_1})$ and let $S \in \mathcal{MF}(\mathbf{g_2 + 1}; \mathbf{g_1}, \mathbf{g_2})$. By Lemma \ref{prop EH MF} we know that $\operatorname{EH}(S) \subseteq \{ \mathbf{g_1} , \mathbf{g_2} \}$. Note that, $\mathbf{g_2} - \mathbf{g_1} \notin S$. So by Proposition \ref{prop 12 rosales} we have $\operatorname{EH}(S)=\{\mathbf{g_1},\mathbf{g_2}\}$. Now, assume that $\mathbf{g_2}$ is not multiple of $2\mathbf{g_1}-\mathbf{g_2}$. Since $\mathbf{g_1} = k (2\mathbf{g_1} - \mathbf{g_2})$ implies $\mathbf{g_2} = (2k-1) (2\mathbf{g_1} - \mathbf{g_2})$, we have that $\mathbf{g_1}$ also is not multiple of $2\mathbf{g_1} - \mathbf{g_2}$. Let $\bar{S}=\langle \mathcal{O}({\mathbf{g_2}+\mathbf{1}}) \cup \{2\mathbf{g_1}-\mathbf{g_2}\} \rangle$ be the smallest GNS that contain $\mathcal{O}({\mathbf{g_2}+\mathbf{1}})$ and $\{2\mathbf{g_1}-\mathbf{g_2}\}$ with respect to inclusion. So, $\bar{S} \in \mathcal{F}(\mathbf{g_2 + 1}; \mathbf{g_1}, \mathbf{g_2})$. Take $S \in \mathcal{MF}(\mathbf{g_2 + 1}; \mathbf{g_1}, \mathbf{g_2})$ be such that $\bar{S} \subseteq S$. Thus, by Proposition \ref{prop 12 rosales}, it suffices to show that $\mathbf{g_2} - \mathbf{g_1} \notin S$. Suppose that, $\mathbf{g_2} - \mathbf{g_1} \in S$. Thus, $\mathbf{g_1} = (2\mathbf{g_1} - \mathbf{g_2}) + (\mathbf{g_2} - \mathbf{g_1}) \in S$, a contradiction. 
\end{proof}

\begin{corollary}\label{Cor GNS Irreducible}
Let $\mathbf{g_1}, \mathbf{g_2}\in\mathbb{N}_0^d$ such that $\mathbf{g_1}< \mathbf{g_2}$. Then all the elements of $\mathcal{MF}(\mathbf{g_2 + 1};\mathbf{g_1},\mathbf{g_2})$ are irreducible GNS if and only if one of the following conditions holds:
\begin{enumerate}
	\item $2\mathbf{g_1}\leq \mathbf{g_2}\;,$
	\item $\mathbf{g_2}$ is not multiple of $\mathbf{g_2}-\mathbf{g_1}$ and $\mathbf{g_2}$ is multiple of $2\mathbf{g_1}-\mathbf{g_2}$.
\end{enumerate}
\end{corollary}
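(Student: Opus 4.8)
The plan is to obtain the corollary as the contrapositive of Theorem~\ref{TmainS5}, after reducing the statement about the whole family $\mathcal{MF}(\mathbf{g_2}+\mathbf{1};\mathbf{g_1},\mathbf{g_2})$ to the existence of a single GNS with a prescribed set of special gaps. First I would fix an arbitrary $S\in \mathcal{MF}(\mathbf{g_2}+\mathbf{1};\mathbf{g_1},\mathbf{g_2})$. By definition its corner element is $\mathbf{g_2}+\mathbf{1}$ and $\mathbf{g_2}\in \operatorname{H}(S)$, so $\mathbf{g_2}=\operatorname{F}(S)$ is the Frobenius element of $S$ and hence $\mathbf{g_2}\in \operatorname{EH}(S)$ by Lemma~\ref{maximals especial gap}. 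On the other hand, Proposition~\ref{prop EH MF} gives $\operatorname{EH}(S)\subseteq \{\mathbf{g_1},\mathbf{g_2}\}$, so that $\operatorname{EH}(S)$ equals either $\{\mathbf{g_2}\}$ or $\{\mathbf{g_1},\mathbf{g_2}\}$. Since Proposition~\ref{especial irredutivel} characterizes irreducibility by $|\operatorname{EH}(S)|=1$, the GNS $S$ is irreducible precisely when $\operatorname{EH}(S)=\{\mathbf{g_2}\}$, and non-irreducible precisely when $\operatorname{EH}(S)=\{\mathbf{g_1},\mathbf{g_2}\}$.

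Next I would argue that the universal quantifier over $\mathcal{MF}(\mathbf{g_2}+\mathbf{1};\mathbf{g_1},\mathbf{g_2})$ collapses to a single existence statement. If some GNS $S$ has $\operatorname{EH}(S)=\{\mathbf{g_1},\mathbf{g_2}\}$, then $S\in \mathcal{MF}(\mathbf{g_2}+\mathbf{1};\mathbf{g_1},\mathbf{g_2})$ by Proposition~\ref{prop EH MF}, and $S$ is non-irreducible by the previous paragraph; conversely, any non-irreducible member of $\mathcal{MF}(\mathbf{g_2}+\mathbf{1};\mathbf{g_1},\mathbf{g_2})$ has exactly this set of special gaps. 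Hence every element of $\mathcal{MF}(\mathbf{g_2}+\mathbf{1};\mathbf{g_1},\mathbf{g_2})$ is irreducible if and only if there is \emph{no} GNS $S$ with $\operatorname{EH}(S)=\{\mathbf{g_1},\mathbf{g_2}\}$.

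It then remains to negate the existence criterion of Theorem~\ref{TmainS5}. Abbreviating its right-hand side as $(\mathbf{g_2}<2\mathbf{g_1})\wedge(A\vee B)$, where $A$ reads ``$\mathbf{g_2}=k(\mathbf{g_2}-\mathbf{g_1})$ for some positive integer $k$'' and $B$ reads ``$\mathbf{g_2}$ is not a multiple of $2\mathbf{g_1}-\mathbf{g_2}$'', De~Morgan's laws turn its negation into $(\mathbf{g_2}\not<2\mathbf{g_1})\vee(\neg A\wedge\neg B)$. Here $\neg A$ says that $\mathbf{g_2}$ is not a multiple of $\mathbf{g_2}-\mathbf{g_1}$ and $\neg B$ says that $\mathbf{g_2}$ is a multiple of $2\mathbf{g_1}-\mathbf{g_2}$, so the second disjunct is precisely condition~(2). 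Matching the first disjunct with condition~(1) is where the standing hypothesis $\mathbf{g_1}<\mathbf{g_2}$ enters, since $2\mathbf{g_1}\le\mathbf{g_2}$ rules out $\mathbf{g_2}<2\mathbf{g_1}$ (the two together would force $\mathbf{g_2}=2\mathbf{g_1}$).

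I expect the Boolean manipulation to be routine, and the main point to handle with care is the order-theoretic translation in the last step: because $\le$ is only a partial order on $\mathbb{N}_0^d$, the condition ``$\mathbf{g_2}\not<2\mathbf{g_1}$'' coming out of the negation is a priori weaker than the formulation ``$2\mathbf{g_1}\le\mathbf{g_2}$'' used in condition~(1). I would address this by observing that condition~(2) can hold only when $2\mathbf{g_1}-\mathbf{g_2}\in\mathbb{N}_0^d$, that is when $\mathbf{g_2}\le 2\mathbf{g_1}$, so that conditions~(1) and~(2), read together, are meant to cover exactly the two regimes $2\mathbf{g_1}\le \mathbf{g_2}$ and $\mathbf{g_2}\le 2\mathbf{g_1}$ arising from the negated criterion; this comparability is the one delicate point that must be made explicit to close the argument.
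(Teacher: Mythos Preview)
Your approach is exactly what the paper intends: the corollary carries no proof in the paper and is meant to follow immediately by negating Theorem~\ref{TmainS5}, after the reduction via Proposition~\ref{prop EH MF} and Proposition~\ref{especial irredutivel} that you carried out correctly.

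Your final observation, however, is more than a ``delicate point to be made explicit'': it is a genuine imprecision in the corollary as stated, not a gap you can close by further argument. In $\mathbb{N}_0^d$ with $d\geq 2$ the negation of $\mathbf{g_2}<2\mathbf{g_1}$ is strictly weaker than $2\mathbf{g_1}\leq \mathbf{g_2}$, and when $2\mathbf{g_1}$ and $\mathbf{g_2}$ are incomparable neither condition~(1) nor condition~(2) need hold even though, by Theorem~\ref{TmainS5}, no GNS has $\operatorname{EH}(S)=\{\mathbf{g_1},\mathbf{g_2}\}$ and hence every member of $\mathcal{MF}(\mathbf{g_2}+\mathbf{1};\mathbf{g_1},\mathbf{g_2})$ is irreducible. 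A concrete witness is $\mathbf{g_1}=(1,2)$, $\mathbf{g_2}=(3,3)$: here $2\mathbf{g_1}=(2,4)$ is incomparable with $\mathbf{g_2}$, so (1) fails, while $2\mathbf{g_1}-\mathbf{g_2}=(-1,1)\notin\mathbb{N}_0^d$, so the ``multiple'' in (2) is not defined in the paper's sense and certainly cannot equal $\mathbf{g_2}$. Thus the ``if'' direction of the corollary goes through exactly as you wrote, but the ``only if'' direction requires either replacing (1) by ``$\mathbf{g_2}\not<2\mathbf{g_1}$'' or adding a comparability hypothesis on $2\mathbf{g_1}$ and $\mathbf{g_2}$. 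You were right to flag this; do not expect to repair it without amending the statement.
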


\begin{example}
Let $\mathbf{g_2}=(3,3)$ and $\mathbf{g_1}=(1,1)$. In this case, 
$$\mathcal{MF}(\mathbf{g_2 + 1};\mathbf{g_1},\mathbf{g_2})=\{S_1,\ldots,S_{22}\}, \text{where}$$ 

\noindent$
S_1=\mathbb{N}^2_0\setminus\{ ( 1, 0 ), ( 1, 1 ), (2, 0 ), (2, 1 ), ( 3, 0 ), (3, 1 ), ( 3, 2 ), ( 3, 3 ) \} \\
S_2=\mathbb{N}^2_0\setminus\{ ( 1, 0 ), ( 1, 1 ), ( 1, 2 ), ( 2, 0 ), ( 3, 0 ), ( 3, 1 ), ( 3, 2 ), ( 3, 3 ) \} \\
S_3=\mathbb{N}^2_0\setminus\{ ( 1, 0 ), ( 1, 1 ), ( 1, 2 ), ( 1, 3 ), ( 3, 0 ), ( 3, 1 ), ( 3, 2 ), ( 3, 3 ) \} \\
S_4=\mathbb{N}^2_0\setminus\{ ( 0, 1 ), ( 1, 0 ), ( 1, 1 ), ( 2, 0 ), ( 2, 1 ), ( 3, 0 ), ( 3, 1 ), ( 3, 3 ) \} \\
S_5=\mathbb{N}^2_0\setminus\{ ( 0, 1 ), ( 1, 0 ), ( 1, 1 ), ( 1, 3 ), ( 2, 1 ), ( 3, 0 ), ( 3, 1 ), ( 3, 3 ) \} \\
S_6=\mathbb{N}^2_0\setminus\{ ( 0, 1 ), ( 1, 0 ), ( 1, 1 ), ( 1, 2 ), ( 2, 0 ), ( 3, 0 ), ( 3, 1 ), ( 3, 3 ) \} \\
S_7=\mathbb{N}^2_0\setminus\{ ( 0, 1 ), ( 1, 0 ), ( 1, 1 ), ( 1, 2 ), ( 1, 3 ), ( 3, 0 ), ( 3, 1 ), ( 3, 3 ) \} \\
S_8=\mathbb{N}^2_0\setminus\{ ( 0, 1 ), ( 0, 3 ), ( 1, 1 ), ( 1, 3 ), ( 2, 1 ), ( 2, 3 ), ( 3, 1 ), ( 3, 3 ) \} \\
S_9=\mathbb{N}^2_0\setminus\{ ( 0, 1 ), ( 0, 3 ), ( 1, 0 ), ( 1, 1 ), ( 2, 0 ), ( 2, 1 ), ( 3, 1 ), ( 3, 3 ) \} \\
S_{10}=\mathbb{N}^2_0\setminus\{ ( 0, 1 ), ( 0, 3 ), ( 1, 0 ), ( 1, 1 ), ( 1, 3 ), ( 2, 1 ), ( 3, 1 ), ( 3, 3 ) \} \\
S_{11}=\mathbb{N}^2_0\setminus\{  ( 0, 1 ), ( 0, 3 ), ( 1, 0 ), ( 1, 1 ), ( 1, 2 ), ( 2, 0 ), ( 3, 1 ), ( 3, 3 ) \} \\
S_{12}=\mathbb{N}^2_0\setminus\{ ( 0, 1 ), ( 0, 3 ), ( 1, 0 ), ( 1, 1 ), ( 1, 2 ), ( 1, 3 ), ( 3, 1 ), ( 3, 3 ) \} \\
 S_{13}=\mathbb{N}^2_0\setminus\{ ( 0, 1 ), ( 0, 2 ), ( 1, 0 ), ( 1, 1 ), ( 2, 0 ), ( 2, 1 ), ( 3, 0 ), ( 3, 3 ) \} \\
S_{14}=\mathbb{N}^2_0\setminus\{  ( 0, 1 ), ( 0, 2 ), ( 1, 0 ), ( 1, 1 ), ( 1, 3 ), ( 2, 1 ), ( 3, 0 ), ( 3, 3 ) \} \\
S_{15}=\mathbb{N}^2_0\setminus\{ ( 0, 1 ), ( 0, 2 ), ( 1, 0 ), ( 1, 1 ), ( 1, 2 ), ( 2, 0 ), ( 3, 0 ), ( 3, 3 ) \} \\
S_{16}=\mathbb{N}^2_0\setminus\{ ( 0, 1 ), ( 0, 2 ), ( 1, 0 ), ( 1, 1 ), ( 1, 2 ), ( 1, 3 ), ( 3, 0 ), ( 3, 3 ) \} \\
S_{17}=\mathbb{N}^2_0\setminus\{  ( 0, 1 ), ( 0, 2 ), ( 0, 3 ), ( 1, 1 ), ( 1, 3 ), ( 2, 1 ), ( 2, 3 ), ( 3, 3 ) \} \\
S_{18}=\mathbb{N}^2_0\setminus\{  ( 0, 1 ), ( 0, 2 ), ( 0, 3 ), ( 1, 1 ), ( 1, 2 ), ( 1, 3 ), ( 2, 3 ), ( 3, 3 ) \} \\
S_{19}=\mathbb{N}^2_0\setminus\{  ( 0, 1 ), ( 0, 2 ), ( 0, 3 ), ( 1, 0 ), ( 1, 1 ), ( 2, 0 ), ( 2, 1 ), ( 3, 3 ) \} \\
S_{20}=\mathbb{N}^2_0\setminus\{  ( 0, 1 ), ( 0, 2 ), ( 0, 3 ), ( 1, 0 ), ( 1, 1 ), ( 1, 3 ), ( 2, 1 ), ( 3, 3 ) \} \\
S_{21}=\mathbb{N}^2_0\setminus\{  ( 0, 1 ), ( 0, 2 ), ( 0, 3 ), ( 1, 0 ), ( 1, 1 ), ( 1, 2 ), ( 2, 0 ), ( 3, 3 ) \} \\
 S_{22}=\mathbb{N}^2_0\setminus\{ ( 0, 1 ), ( 0, 2 ), ( 0, 3 ), ( 1, 0 ), ( 1, 1 ), ( 1, 2 ), ( 1, 3 ), ( 3, 3 ) \}
 $
 
By Corollary \ref{Cor GNS Irreducible}, $S_i$ is irreducible for all $i = 1,\ldots,22$.
\end{example}

	\end{document}